\let\nulset\emptyset        
\renewcommand{\emptyset}{\text{\large$\nulset$}}
\def\R{\mathbb{R}}\def\C{\mathbb{C}}
\def\S{$\mathsection$}
\renewcommand{\epsilon}{\varepsilon}
\newcounter{sequation}[section] 
\newcounter{pequation} 
\newenvironment{proof}{\setcounter{pequation}{0}\noindent}{\,\,\nobreak$\square$\par} 
\title{\vskip-.6in         New Korn and projection estimates  on domains for a class of constant-rank operators \author{\scshape Adolfo Arroyo-Rabasa }
                                                \date{\vspace{-.2in}}}%
\newcommand{\Krm}{\mathrm{K}}
\newcommand{\Mrm}{\mathrm{M}}
\newcommand{\Orm}{\mathrm{O}}
\newcommand{\Srm}{\mathrm{S}}
\newcommand{\Acal}{\mathcal{A}}
\newcommand{\Fcal}{\mathcal{F}}
\newcommand{\Kcal}{\mathcal{K}}
\newcommand{\Lcal}{\mathcal{L}}
\newcommand{\Mcal}{\mathcal{M}}
\newcommand{\Qcal}{\mathcal{Q}}
\newcommand{\Xcal}{\mathcal{X}}
\newcommand{\Rbf}{\mathbb{R}}
\newcommand{\Sbf}{\mathbf{S}}
\newcommand{\Cbb}{\mathbb{C}}
\newcommand{\Lbb}{\mathbb{L}}
\newcommand{\Rbb}{\mathbb{R}}
\newcommand{\Tbb}{\mathbb{T}}
\newcommand{\Zbb}{\mathbb{Z}}
\DeclareMathOperator{\SO}{SO}
\DeclareMathOperator{\SL}{SL}
\DeclareMathOperator{\so}{\mathfrak{so}}
\DeclareMathOperator{\sll}{\mathfrak{sl}}
\DeclareMathOperator{\id}{id}
\DeclareMathOperator{\Mat}{Mat}
\DeclareMathOperator{\dev}{dev}
\DeclareMathOperator{\im}{Im}
\DeclareMathOperator{\diam}{diam}
\DeclareMathOperator{\diverg}{div}
\DeclareMathOperator{\dist}{dist}
\DeclareMathOperator{\rank}{rank}
\DeclareMathOperator{\spn}{span}
\newcommand{\set}[2]{\left\{\, #1 \ \textup{{:}}\ #2 \,\right\}}
\newcommand{\dpr}[1]{\langle #1 \rangle}
\newcommand{\dprb}[1]{\bigl\langle #1 \bigr\rangle}
\newcommand{\cl}[1]{\overline{#1}}
\newcommand{\loc}{\mathrm{loc}}
\newcommand{\sym}{\mathrm{sym}}
\newcommand{\toweak}{\rightharpoonup}
\newcommand{\embed}{\hookrightarrow}
\newcommand{\sbullet}{\begin{picture}(1,1)(-0.5,-2)\circle*{2}\end{picture}}
\newcommand{\frarg}{\,\sbullet\,}
\newcommand{\eps}{\epsilon}
\DeclareMathOperator{\Hom}{Hom}
\DeclareMathOperator{\Hol}{Hol}
\newtheorem{theorem}{Theorem}
\newtheorem{lemma}{Lemma}
\newtheorem{definition}{Definition}
\newtheorem{corollary}{Corollary}
\newtheorem{example}{Example}
\newtheorem{remark}{Remark}
\newtheorem{proposition}[theorem]{Proposition}
\newtheorem{thmx}{Theorem}
\newtheorem{notation*}{Notation}
\begin{document}

	\maketitle
		 
\thispagestyle{empty}

\setlength{\abovedisplayskip}{8pt plus 3pt minus 5pt}
\setlength{\belowdisplayskip}{8pt plus 3pt minus 5pt}

	\begin{abstract} Let $1 < p < \infty$ and let $\Omega$ be an open and bounded set of $\R^n$. We establish classical  Korn inequalities
	\[
	\inf_{\substack{v \in L^p(\Omega)\\\Acal v = 0}} \|u - v\|_{W^{k,p}(\Omega)}  \le C \| \Acal u\|_{L^p(\Omega)}
	\]
for all $k$\textsuperscript{th} order operators $\Acal$ satisfying the \emph{maximal-rank} condition. This \emph{new} condition is satisfied by the divergence, Laplacian, Laplace-Beltrami, and Wirtinger operators, among others.  
As such, our estimates generalize Fuchs' estimates for the del-bar operator to maximal-rank operators and to {arbitrary} open sets. 
For domains with sufficiently regular boundary $\partial \Omega$, we are able to construct an $L^p(\Omega)$-bounded projection $P$, onto the kernel of the operator. This projection is shown to satisfy a classical Fonseca--M\"uller projection estimate
\[
	\|u - Pu\|_{L^p(\Omega)}  \le C \| \Acal u\|_{W^{-k,p}(\Omega)}
\]
as well as analogous estimates for higher-order derivatives. As a particular application of our results, we are able to establish a \emph{weak Korn inequality} for general constant-rank operators (by taking the infimum over all $\Acal$-harmonic maps instead of taking it over all $\Acal$-free maps).  Several examples are discussed.  \\

\noindent\textsc{MSC (2020):}  35E20,47F10 (primary); 13D02 (secondary).
		\vspace{4pt}
		
	\noindent\textsc{Keywords:} constant rank,  elliptic, maximal rank, Korn inequality, Sobolev estimate, Poincar\'e inequality, rigidity, rank-one connection.
		\vspace{5pt}
	\end{abstract}
	
	\newpage
	\tableofcontents
\newpage

\section{Introduction} Let $\Omega$ be an open and bounded subset of $\R^n$ and let $k \ge 1$ be an integer. In all the following, our analysis will be restricted to estimates for exponents in the range $1 < p < \infty$.  
We consider a constant coefficient $k$\textsuperscript{th} order homogeneous linear partial differential operator $\Acal$, acting on sufficiently regular maps $u: \Rbb^n \to V$ as 
\begin{equation}\label{eq:A}
\Acal u \, = \, \sum_{|\alpha|=k} A_\alpha D^\alpha u, \qquad A_\alpha \in   \Hom(V,W)\,,
\end{equation} 
where $V,W$ are finite-dimensional $\R$-spaces,  
$(\alpha_1,\dots,\alpha_n)$ is a multi-index of non-negative integers with modulus $|\alpha| = \alpha_1 + \dots + \alpha_n$, and $D^\alpha$ is the composition of the distributional partial derivates $\partial_1^{\alpha_1}\circ \cdots \circ \partial_n^{\alpha_n}$. 
Up to a linear isomorphism, the reader may think of $V$ and $W$ as $\Rbf^M$ and $\Rbf^N$ respectively, in which case $\Acal$ is precisely a system of $N \times M$ partial differential equations. When the operator is homogeneous as in our assumptions, the Fourier transform establishes a one-to-one correspondence between the operator $\Acal$ and its associated \emph{principal  symbol} map $A: \Rbf^n \to \Hom(V,W)$, which is the $k$-homogeneous tensor-valued polynomial defined by
\[
A(\xi) \coloneqq \sum_{|\alpha| = k} A_\alpha\xi^\alpha, \qquad \xi^\alpha \coloneqq \xi_1^{\alpha_1} \cdots \xi_n^{\alpha_n}, \qquad \xi \in \Rbf^n\,.
\]
This work is primarily concerned with the study of Sobolev-distance estimates of the form
\begin{equation}\label{eq:start}
	\inf_{\substack{v \in L^p(\Omega),\\ \Acal v = 0}} \|u - v\|_{W^{k,p}(\Omega)} \le C \|\Acal u\|_{L^p(\Omega)}\,,
\end{equation}
for operators satisfying the \emph{maximal-rank condition}
\begin{equation}
\im A(\xi) = \im A(\eta) \quad \text{for all $\xi, \eta \in \R^n - \{0\}$}\,.
\end{equation} 
Notice that all maximal-rank operators satisfy the well-known (see, e.g.,~\cite{kato,murat1981compacite,SW1}) \emph{constant-rank} condition
\begin{equation}\label{eq:cr}
	\rank \, ( \ker A(\xi) )= \text{const.} \quad \text{for all $\xi \in \R^n - \{0\}$\,.}
\end{equation}

\subsection*{Background theory and motivation}The association between operators and their principal symbol polynomials is often relevant and interesting in the sense that certain functional properties of \guillemotleft~$\Acal$~\guillemotright\, are transferred to certain algebraic properties of the symbol \guillemotleft~$A$~\guillemotright\, and vice-versa. In full-space, the classical  Calder\'on--Zygmund estimates (see~\cite{CZ}) for constant-coefficient elliptic operators  imply that if $\Acal$ is elliptic, i.e, 
\[
	\ker A(\xi) = \{0_V\} \quad \text{for all $\xi \in \R^n - \{0\}$}\,,
\]
then the norms $\|D^ku\|_{L^p}$ and $\|\Acal u\|_{L^p}$ are equivalent in $C_c^\infty(\R^n;V)$; it is well-known to experts that the ellipticity condition is also necessary for the equivalence of the norms (see for instance~\cite{vans2013jems}). 
The generalization of this result to non-elliptic operators is one of the cornerstones of the modern compensated compactness theory:
\begin{thmx}[Fonseca and M\"uller, 1999\,;\, Guerra and Raita, 2020]\label{thm:FM99} Let $\Acal $
be a homogeneous differential operator of order $k$ on $\R^n$ as in~\eqref{eq:A}. The following are equivalent: (\,\footnote{Originally, the sufficiency \ref{A1} of the constant-rank condition is due to Fonseca and M\"uller, for first-order operators~\cite{fonseca1999quasi} and periodic maps; the statement for higher-order systems is a mere observation of this and is recorded in~\cite{advances}. In full space, the sufficiency was recorded by Raita in the unpublished note~\cite{BR}. The necessity \ref{A2} of the constant-rank condition in full-space was recently established by Guerra and Raita in~\cite{guerra2020necessity}.})
\begin{enumerate}[label=\arabic*., ref=\arabic*, left=10pt]
\item \label{A1}  There exists $r \in \Zbb$ such that
\[
	\rank A(\xi) = r \qquad \text{for all $\xi \in \R^n - \{0\}$}\,.
\]
\item \label{A2} There exists a constant $C = C(p, A)$ such that 
\[
	\|D^k (u -Pu)\|_{L^p(\R^n)} \le C  \|\Acal u\|_{L^p(\R^n)}
\]
for every $u \in C^\infty_c(\R^n;V)$. Here, $P$ is the $L^p$-extension of the $L^2$-projection onto the distributional kernel of $\mathcal A$ restricted to the space $C^\infty_c(\R^n;V)$.  
\end{enumerate}
\end{thmx}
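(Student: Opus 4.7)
The plan is to prove the two implications separately. The sufficiency (\ref{A1}) $\Rightarrow$ (\ref{A2}) will follow from Mikhlin--H\"ormander Fourier multiplier theory applied to a smooth pseudoinverse of the symbol $A(\xi)$, whose existence is precisely guaranteed by the constant-rank hypothesis. The necessity (\ref{A2}) $\Rightarrow$ (\ref{A1}) will be established by contradiction through a high-frequency wave-packet construction that exploits any hypothetical drop in the rank of $A(\xi)$.

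For the sufficiency I would work entirely on the Fourier side. The constant-rank condition guarantees that $\xi \mapsto \ker A(\xi)$ is a smooth vector sub-bundle of $(\R^n - \{0\}) \times V$, so the orthogonal projections $\pi(\xi) := \pi_{\ker A(\xi)}$ and the Moore--Penrose pseudoinverse $A^{\dagger}(\xi) : W \to V$ depend smoothly on $\xi \neq 0$; moreover $A^{\dagger}$ is $(-k)$-homogeneous. Consequently the matrix-valued symbol $M(\xi) := |\xi|^k\, A^{\dagger}(\xi)$ is smooth and $0$-homogeneous on $\R^n - \{0\}$, hence a vector-valued Mikhlin--H\"ormander multiplier, bounded on $L^p(\R^n)$ for every $1 < p < \infty$. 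Using the identity $A^{\dagger}(\xi)\,A(\xi) = I - \pi(\xi)$ on $V$, one obtains
\[
|\xi|^k \bigl( I - \pi(\xi) \bigr)\,\widehat{u}(\xi) \;=\; M(\xi)\,\widehat{\mathcal A u}(\xi),
\]
so by Fourier inversion $\|D^k(u - Pu)\|_{L^p(\R^n)} \le C\,\|\mathcal A u\|_{L^p(\R^n)}$, where $P$ is the Fourier multiplier with symbol $\pi(\xi)$. The same multiplier theorem shows $P$ is $L^p$-bounded, and it agrees on $L^2$ with the orthogonal projection onto $\ker \mathcal A$.

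For the necessity I would argue by contradiction, supposing the rank drops at some nonzero frequency. By lower semicontinuity of rank applied to the polynomial family $A(\xi)$, there is $\xi_0 \in \R^n - \{0\}$ and a vector $v_0 \in \ker A(\xi_0)$ with $v_0 \notin \ker A(\xi)$ for $\xi$ in a punctured neighbourhood of $\xi_0$. I would then build a high-frequency wave packet of the form $u_\varepsilon(x) := \varphi(\varepsilon x)\,v_0\,e^{2\pi \mathrm{i}\,\xi_0 \cdot x/\varepsilon}$ with $\varphi \in C_c^\infty(\R^n)$, which localizes in Fourier space near $\xi_0/\varepsilon$. A Taylor expansion of $A$ around $\xi_0/\varepsilon$ combined with $A(\xi_0)v_0 = 0$ yields $\|\mathcal A u_\varepsilon\|_{L^p} = o(\|D^k u_\varepsilon\|_{L^p})$ as $\varepsilon \downarrow 0$, while a direct Fourier-side computation shows that $\pi(\xi)v_0$ is small on a full-measure portion of any neighbourhood of $\xi_0$ (since $v_0 \notin \ker A(\xi)$ there), so $\|D^k P u_\varepsilon\|_{L^p}$ is negligible compared to $\|D^k u_\varepsilon\|_{L^p}$. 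Together these estimates force
\[
\|D^k(u_\varepsilon - P u_\varepsilon)\|_{L^p} \;\gtrsim\; \|D^k u_\varepsilon\|_{L^p} \;\gg\; \|\mathcal A u_\varepsilon\|_{L^p},
\]
contradicting (\ref{A2}).

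The main obstacle lies in this second direction: one must carefully quantify the balance between the vanishing rate of $\|\mathcal A u_\varepsilon\|_{L^p}$, which is controlled by the order of vanishing of $\xi \mapsto A(\xi) v_0$ at $\xi = \xi_0$, and the suppression of $P u_\varepsilon$, controlled by the distance from $v_0$ to the kernels $\ker A(\xi)$ for $\xi$ near $\xi_0$. The algebraic geometry of the rank-drop locus enters here in an essential way, and the choice of envelope $\varphi$, wave-packet frequency scale, and direction must be tuned so that these competing rates remain distinguishable in the $L^p$-norm; this is the heart of the Guerra--Raita argument.
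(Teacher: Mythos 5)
Theorem~\ref{thm:FM99} is not proved in the paper; it is quoted as a background result, with the sufficiency of constant rank attributed to Fonseca--M\"uller~\cite{fonseca1999quasi} and Raita~\cite{BR}, and the necessity to Guerra--Raita~\cite{guerra2020necessity} (see the footnote attached to the statement). Your attempt must therefore be compared against those references rather than against an internal argument of this paper.

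Your sufficiency direction is correct and is essentially the argument of Fonseca--M\"uller and Raita: under constant rank the Moore--Penrose pseudoinverse $A^{\dagger}(\xi)$ depends smoothly on $\xi \neq 0$ and is $(-k)$-homogeneous, so $|\xi|^{k}A^{\dagger}(\xi)$ and the kernel projection $\pi(\xi)$ are Mikhlin--H\"ormander multipliers, and the identity $A^{\dagger}(\xi)A(\xi)=I-\pi(\xi)$ gives the estimate, with $P$ realized as the multiplier of symbol $\pi$ (this identification with the $L^{2}$-projection is valid regardless of constant rank).

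For necessity you depart from the published route, and there is a genuine gap. Guerra and Raita do not build wave packets; they observe that the estimate forces a certain $0$-homogeneous matrix-valued symbol assembled from $\pi$ to act as an $L^{p}$-bounded Fourier multiplier, and then invoke the classical fact that for $p\neq 2$ such a multiplier must have a symbol continuous away from the origin, which a rank drop precludes. Your wave-packet alternative, as written, asserts that $v_{0}\notin\ker A(\xi)$ for $\xi$ near $\xi_{0}$ implies that $\pi(\xi)v_{0}$ is small there. That implication is false: non-membership only gives $\pi(\xi)v_{0}\neq v_{0}$, not $|\pi(\xi)v_{0}|\ll|v_{0}|$. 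To repair it one must choose $v_{0}$ orthogonal to the (proper) subspace of $\ker A(\xi_{0})$ consisting of all limit points of $\ker A(\xi)$ as $\xi\to\xi_{0}$, and then verify uniform smallness of $\pi(\xi)v_{0}$ over the angular sector swept by $\widehat{u_{\varepsilon}}$ (which, since $\widehat{u_{\varepsilon}}$ concentrates at radius $|\xi_{0}|/\varepsilon$ with linear width $\sim\varepsilon$, has angular width $\sim\varepsilon^{2}$). There is also a circularity worry: controlling $\|D^{k}Pu_{\varepsilon}\|_{L^{p}}$ cannot lean on $L^{p}$-boundedness of $P$, which is part of what is to be disproved, so one must work from the $L^{2}$ identity together with the compact Fourier localization of $u_{\varepsilon}$. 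None of this is in the sketch; it is exactly the part you flag as ``the heart'' of the argument, and it is where the published proof takes a cleaner multiplier-theoretic route instead.
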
 
\begin{remark}Analogous \emph{projection estimates} hold if instead of considering functions defined over $\Rbf^n$, one considers functions over the $n$-dimensional torus $\Tbb^n = \Rbb^n / \Zbb^n$; the crucial element of the proof being the availability of Fourier coefficient decompositions.
\end{remark}  In this vein, one may ask whether similar estimates hold on domains of $\R^n$. To this end, one considers $\Acal $ as an unbounded linear operator 
\[
	\Acal : D(\Acal) \subset L^p(\Omega;V) \longrightarrow L^p(\Omega;W)\,.
\]
Denoting its nullspace by $N_p(\Acal,\Omega)$, one could expect the constant-rank condition~\eqref{eq:cr} to be a sufficient (and necessary) condition for the existence of bounded linear projection $P: L^p(\Omega;V) \twoheadrightarrow N_p(\Acal,\Omega)$ satisfying 
\begin{equation}\label{eq:projection_estimate}
	\|u -Pu\|_{W^{k,p}(\Omega)} \le C  \|Au\|_{L^p(\Omega)}  \quad \text{for all $u \in D(\Acal)$.}
\end{equation}
It turns out that proving projection estimates on a domain $\Omega \subset \Rbf^n$ is considerably more challenging than working on $\Rbf^n$ or $\Tbb^n$. This stems from the lack of Fourier transform arguments and the fact that, even for balls, there are no suitable trace or extension operators for general constant-rank operators.  
In fact, it is not known whether the constant-rank property is a sufficient condition for the validity of the  Sobolev-distance estimate (\,\footnote{For $p= 2$, the projection estimate~\eqref{eq:projection_estimate} and the Sobolev-distance estimate~\eqref{eq:distance} are equivalent.}\,)\begin{equation}\label{eq:distance}
	\inf_{v \in N_p(\Acal,\Omega)} \|u - v\|_{W^{k,p}(\Omega)} \le C \|\Acal u\|_{L^p(\Omega)} \quad \text{for all $u \in D(\Acal)$\,.} 
\end{equation}

\begin{remark}[Korn's inequality]
On \emph{Poincar\'e domains} (domains where the classical Poincar\'e inequality is satisfied) such as Lipschitz, H\"older or Jones domains, the Sobolev-distance estimate is equivalent to the Korn inequality 
\[
	\inf_{v \in N_p(\Acal,\Omega)}\|D^k(u - v)\|_{L^p(\Omega)} \le C \|\Acal u\|_{L^p(\Omega)}\,.
\]
\end{remark}

There are a few well-known instances (on sufficiently regular domains) for which Sobolev-distance estimates are known to hold. For the $k$\textsuperscript{th} order gradient operator, the  Deny--Lions Lemma establishes that
 \[
 \min_{p \in P_{k-1}(V)}\|u - p\|_{W^{k,p}(\Omega)} \le C \|D^ku\|_{L^p(\Omega)}\,,
 \]
where $P_r(V)$ denotes the space of polynomials on $n$ variables, with $V$-valued coefficients, and order at most $r$.
For the symmetric gradient operator $\varepsilon (u) = \frac 12 (Du + Du^T)$, 
 the estimate follows directly from Korn's second inequality (cf. Sect.~\ref{Sec:Korn}). 
Korn estimates for the deviatoric operator $\varepsilon^D (u) = Eu - n^{-1} (\diverg u) I_n$, which also acts on vector fields, were established for $n \ge 3$ in~\cite{Diening1} (although the result had been known long before as it is discussed next). The operators in these three examples \guillemotleft~$D^k, \eps, \eps^D$~\guillemotright\, belong to a class of ``very elliptic'' operators, introduced by Aronszajn~\cite{ar} and Smith~\cite{smith1,smith2},  with  finite-dimensional \emph{distributional nullspace} 
\[
N(\Acal,\R^n) \coloneqq \set{u \in \mathscr D'(\R^n;V)}{\Acal u = 0}. \quad (\,\footnote{Smith's original definition~\cite{smith1} requires the complexification of the principal symbol \guillemotleft~$A(\xi)_{\Cbb} \in \Hom_\C(V,W)$~\guillemotright\, to be one-to-one for all non-zero complex frequencies $\xi \in \Cbb^n - \{0\}$; which is why these operators have been recently coined (see~\cite{Diening}) under the ad-hoc name of \emph{complex-elliptic operators}. Smith himself showed that complex ellipticity is equivalent to the \emph{FDN property} (finiteness of its distributional nullspace).}\,)
\] 
For operators in this class, a linear trace functional exists, and therefore, stronger estimates hold on Lipschitz domains~\cite{smith2} (even on domains satisfying considerably milder regularity assumptions, cf.~\cite{diening2021sharp}):

\begin{thmx}[Smith, 1970\,;\, Diening and  Gmeineder, 2021]\label{thm:Smith}
	Let $\Omega \subset \R^n$ be a Lipschitz domain (or Jones) domain and let $\Acal $
be a homogeneous differential operator of order $k$ on $\R^n$. The following are equivalent: ( \footnote{The equivalence was proved by Smith for $1 < p < \infty$ on Lipshchitz domains. Diening and Gemeineder established the same for Jones domains.} )
\begin{enumerate}[label=\arabic*., left=10pt]
\item \label{B1} $\dim \, N(\Acal,\R^n) < \infty$\,.
\item There exists a constant $C = C(p,\Omega,A)$ such that 
\[
	\|u\|_{W^{k,p}(\Omega)} \le C \left(\|u\|_{L^p(\Omega)} + \|\Acal u\|_{L^p(\Omega)} \right) \qquad \text{for all $u \in D(\Acal)$}\,.
\]
\item \label{B3} There exists a bounded linear projection $P : L^p(\Omega;V) \longrightarrow  N(\Acal,\R^n)$ such that
\[
	\|u - Pu\|_{W^{k,p}(\Omega)} \le C(p,\Omega,A)  \|\Acal u\|_{L^p(\Omega)}
\]
for all $u \in L^p(\Omega;V)$ with $\Acal u \in L^p(\Omega;W)$.
\end{enumerate}
\end{thmx}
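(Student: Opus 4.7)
The plan is to verify the chain of implications (3)~$\Rightarrow$~(2)~$\Rightarrow$~(1)~$\Rightarrow$~(3), reserving the real work for the last step. The implication (3)~$\Rightarrow$~(2) is a bookkeeping exercise: since $Pu$ lies in the finite-dimensional space $N(\Acal,\R^n)$, all norms on its range are equivalent, so $\|Pu\|_{W^{k,p}(\Omega)}\le C\|Pu\|_{L^p(\Omega)}\le C(\|u\|_{L^p(\Omega)}+\|u-Pu\|_{L^p(\Omega)})$; combining with the projection bound yields (2).

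For (2)~$\Rightarrow$~(1), I would take any $u\in N(\Acal,\R^n)$, mollify by $\rho_\eps\ast u$ so as to apply (2) on subdomains $\Omega'\Subset\Omega$, and pass to the limit to obtain $\|u\|_{W^{k,p}(\Omega')}\le C(\Omega',\Omega)\|u\|_{L^p(\Omega)}$. Rellich--Kondrachov then forces $N_p(\Acal,\Omega)$ to be finite-dimensional. A standard bootstrap of (2) on nested subdomains upgrades $\Acal$-free distributions to smooth maps, after which unique continuation injects $N(\Acal,\R^n)$ into $N_p(\Acal,\Omega)$, giving (1).

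The heart of the proof is (1)~$\Rightarrow$~(3). By Smith's theorem, (1) is equivalent to the \emph{complex ellipticity} of $\Acal$, i.e.\ injectivity of $A(\xi)_\Cbb$ for every $\xi\in\Cbb^n\setminus\{0\}$; in this case the scalar operator $\Lcal:=\Acal^\ast\Acal$ of order $2k$ is classically elliptic, and Mikhlin's multiplier theorem applied to the homogeneous multiplier $\xi^\alpha(A(\xi)^\ast A(\xi))^{-1}A(\xi)^\ast$ with $|\alpha|=k$ yields the full-space bound
\[
\|D^k u\|_{L^p(\R^n)}\le C\|\Acal u\|_{L^p(\R^n)},\qquad u\in C_c^\infty(\R^n;V).
\]
To transfer this to $\Omega$, I define $P$ as the $L^2$-orthogonal projection of $L^p(\Omega;V)$ onto the restriction $N(\Acal,\R^n)|_\Omega$ (a finite-dimensional subspace on which any two norms are equivalent, so $P$ is $L^p$-bounded), and apply the full-space estimate to $E(u-Pu)$, where $E\colon W^{k,p}(\Omega)\to W^{k,p}(\R^n)$ is the Stein extension on Lipschitz domains or the Jones extension on Jones domains.

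The principal obstacle is that $E$ does not commute with $\Acal$: on $\R^n\setminus\Omega$, the quantity $\Acal Ew$ carries boundary residuals that the full-space bound cannot absorb directly. The cleanest remedy is to invoke the scalar Agmon--Douglis--Nirenberg $L^p$-regularity up to the boundary for $\Lcal$; alternatively, on Jones domains one performs a Whitney decomposition of $\R^n\setminus\Omega$ aligned with Jones' construction and controls the contributions cell by cell. In either route, a Ne\v{c}as-type negative-norm trick combined with a compactness / contradiction scheme (using Rellich--Kondrachov and the injectivity of the restriction map on $N(\Acal,\R^n)$ proved in (2)~$\Rightarrow$~(1)) absorbs the lower-order residuals into the projection; the role of $P$ is precisely to annihilate the polynomial obstructions that would otherwise block the bound.
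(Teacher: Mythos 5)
The paper does not prove Theorem~\ref{thm:Smith} from scratch: the equivalence of (1) and (2) is attributed to Smith (Lipschitz domains) and Diening--Gmeineder (Jones domains), and the adjacent Proposition~\ref{prop:CE} only supplies the soft, domain-internal passage from the coercive estimate to the projection estimate. There, (2) is recognized as a Petree-type inequality $\|u\|_E \le C(\|Tu\|_F + \|Pu\|_G)$ with the identity $P = \id$ compact by Rellich; the closed-range criterion (Brezis, Thm.~6.9) then gives that $\Acal$ has closed range, the open mapping theorem yields the Sobolev-distance estimate with respect to $N(\Acal,\R^n)|_\Omega$, finite-dimensionality of that space (again from Rellich plus (2)) produces a bounded linear projection, and a short triangle-inequality computation closes the loop. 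Crucially, this chain never leaves $\Omega$, never uses an extension operator, and never invokes a full-space multiplier bound.

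Your route for (1)~$\Rightarrow$~(3) is genuinely different and genuinely incomplete. Extending $u-Pu$ to $\R^n$ and applying a Mikhlin/Korn bound runs into precisely the obstacle you name: $\Acal$ does not commute with the Stein/Jones extension $E$, so $\Acal E(u-Pu)$ carries mass on $\R^n\setminus\Omega$ that $\|\Acal u\|_{L^p(\Omega)}$ does not control. The three remedies you then list are wish-list items, not arguments. ``Invoke ADN regularity for $\Lcal=\Acal^*\Acal$'' presupposes a well-posed boundary value problem with Lopatinskii-admissible boundary conditions, none of which are specified, and even granting it you would bound $\|D^{2k}w\|$ by $\|\Lcal w\|$, not $\|D^k(u-Pu)\|$ by $\|\Acal u\|$. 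The Whitney-cell control of residuals on a Jones domain is the technical core of the Diening--Gmeineder construction, i.e., the theorem you are trying to prove, not a black box to cite. And a ``compactness/contradiction scheme'' at best recovers a coercive inequality of type (2), with a non-constructive constant and no projection; it does not produce the bounded linear $P$ that (3) requires. A secondary issue: in your (2)~$\Rightarrow$~(1) sketch, the injectivity of the restriction $N(\Acal,\R^n)\to N(\Acal,\R^n)|_\Omega$ via ``unique continuation'' is not free, since unique continuation fails for non-elliptic constant-coefficient operators (take $\Acal=\partial_1$ on $\R^2$ and a function of $x_2$ vanishing on a strip). You should first deduce real ellipticity of $\Acal$ from (2) by an oscillating-test-function argument, and only then invoke analytic hypoellipticity of the kernel. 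As it stands, the hard direction (1)~$\Rightarrow$~(3) is not established.
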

\begin{remark}
A more comprehensive list of equivalencies, some of which are well-known to experts, are recorded in Prop.~\ref{prop:CE}.
\end{remark}
\begin{remark}[Necessity of the restriction $1 < p<\infty$]
For $p \in \{1,\infty\}$, there are no nontrivial Korn or Sobolev estimates for elliptic operators. Ornstein~\cite{Ornstein} established the result for equations and $p=1$ (the proof for systems is contained in~\cite{KK}). The case for $p=\infty$ was established in~\cite{Lew}.
\end{remark}

\begin{remark}[Maximal-rank is not FDN]
The nullspace $N(\Acal,\R^n)$ of a nontrivial maximal-rank operator $\Acal$ is infinite dimensional (see Rmk.~\ref{rem:fdn}).
\end{remark}

In the context of Korn and projection estimates, Theorem~\ref{thm:Smith} accounts only for a particular case. Indeed, in general, the restriction $N(\Acal,\R^n)|_\Omega$ is strictly smaller than $N_p(\Acal,\Omega)$ for operators with infinite-dimensional nullspace~(cf. Prop.~\ref{prop:CE}). Therefore, while sufficient, the property $\dim N(\Acal,\R^n) < \infty$ (or the existence of traces on $\partial \Omega$) is \emph{strictly unnecesary}  
 for the validity of~\eqref{eq:projection_estimate} or~\eqref{eq:distance}. In the realm of operators with infinite-dimensional kernel, it seems the only previous result is due to Fuchs~\cite{Fuchs}, who established Sobolev-distance estimates for the Wirtinger derivative operator \guillemotleft~$\partial_{\bar z}$~\guillemotright\, on regular subdomains of the complex plane: (\,\footnote{The Wirtinger derivative $\partial_{\bar z}$ is, in fact, one of the simplest prototypes  of a first-order operator with infinite-dimensional nullspace:  $N(\partial_{\bar z})$ coincides with the space $\Hol(\Cbb)$ of holomorphic maps.}\,)
\begin{thmx}[Fuchs, 1997]\label{thm:Fuchs} Let $\omega \subset \Cbb$ be a Lipschitz subdomain. Then, there exists a constant $C = C(p,\omega)$ such that 
\[
\inf_{\text{$v \in \Hol(\omega)$}} \|u - v\|_{W^{1,p}(\omega)} \le C\|\partial_{\bar z} u\|_{L^{p}(\omega)}
\] 
for all $u \in W^{1,p}(\omega;\Cbb)$. 
\end{thmx}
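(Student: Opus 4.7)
The plan is to construct the holomorphic approximation of $u$ explicitly via the Cauchy--Pompeiu transform of $f \coloneqq \partial_{\bar z} u$. The operator $\partial_{\bar z}$ is elliptic with principal symbol $A(\xi) = \xi_1 + \mathrm{i}\xi_2$, and its only obstruction to inversion is the (infinite-dimensional) nullspace $\Hol(\omega)$. The natural strategy is thus to produce a bounded right-inverse $T : L^p(\omega;\Cbb) \to W^{1,p}(\omega;\Cbb)$ with $\partial_{\bar z} \circ T = \id$; then $v \coloneqq u - Tf$ is automatically holomorphic on $\omega$ and serves as the required competitor.

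First, since $\omega$ is bounded, I extend $f$ by zero to $\tilde f \in L^p(\Cbb;\Cbb)$ of compact support, and define
\[
	w(z) \,\coloneqq\, -\frac{1}{\pi}\int_{\Cbb} \frac{\tilde f(\zeta)}{\zeta - z}\,\di A(\zeta),
\]
the Cauchy (Pompeiu) transform of $\tilde f$. By the classical Cauchy--Pompeiu formula, $\partial_{\bar z} w = \tilde f$ in $\mathscr D'(\Cbb)$, hence $\partial_{\bar z} w = f$ almost everywhere in $\omega$. Moreover, $\partial_z w = S \tilde f$, where $S$ is the Beurling--Ahlfors transform, a convolution-type Calder\'on--Zygmund operator bounded on $L^p(\Cbb)$ for every $1 < p < \infty$. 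Combining these facts: $\|\partial_{\bar z} w\|_{L^p(\omega)} = \|f\|_{L^p(\omega)}$ trivially; $\|\partial_z w\|_{L^p(\omega)} \le \|S \tilde f\|_{L^p(\Cbb)} \le C_p \|f\|_{L^p(\omega)}$ by Calder\'on--Zygmund; and $\|w\|_{L^p(\omega)} \le C(p,\omega)\|f\|_{L^p(\omega)}$ by Young's convolution inequality applied to the locally-$L^{2,\infty}$ kernel $1/|\cdot|$ against the compactly supported $\tilde f$. Altogether, $w \in W^{1,p}(\omega;\Cbb)$ with $\|w\|_{W^{1,p}(\omega)} \le C(p,\omega) \|f\|_{L^p(\omega)}$.

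Setting $v \coloneqq u - w \in W^{1,p}(\omega;\Cbb)$, we have $\partial_{\bar z} v = 0$ in $\omega$, so $v \in \Hol(\omega)$. Therefore
\[
	\inf_{h \in \Hol(\omega)} \|u - h\|_{W^{1,p}(\omega)} \,\le\, \|u - v\|_{W^{1,p}(\omega)} \,=\, \|w\|_{W^{1,p}(\omega)} \,\le\, C(p,\omega)\, \|\partial_{\bar z} u\|_{L^p(\omega)},
\]
as desired.

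The only genuinely nontrivial analytic ingredient is the classical $L^p$-continuity of the Beurling--Ahlfors transform; every other step is an algebraic consequence of having an explicit right-inverse to $\partial_{\bar z}$ in closed form. It is worth observing that the argument uses no regularity of $\partial\omega$ beyond the boundedness of $\omega$, so the conclusion in fact holds on any bounded open $\omega \subset \Cbb$---the Lipschitz hypothesis is superfluous here. The \emph{hard part} addressed by the main body of the paper is precisely the absence of such a closed-form kernel for a general maximal-rank operator $\Acal$, which forces the replacement of the Cauchy--Pompeiu machinery by a structural, symbol-based construction of a bounded right-inverse on arbitrary open sets.
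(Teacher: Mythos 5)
Your proof is correct and is essentially the paper's own strategy specialized to the Wirtinger operator: the Cauchy kernel $-1/(\pi z)$ is precisely the true fundamental solution $K$ that Lemma~\ref{lem:existence} constructs abstractly (the inverse Fourier transform of the Moore--Penrose pseudo-inverse $1/(\xi_1+\mathrm{i}\xi_2)$), and defining $v = u - w$ with $w$ the Cauchy--Pompeiu transform of the zero-extension of $\partial_{\bar z}u$ is exactly the projection $T u = u - \Acal^{-1}[\Acal u]$ of Theorem~\ref{thm:2}, with the Beurling--Ahlfors bound playing the role of the Mihlin multiplier estimate. Your closing observation that the Lipschitz hypothesis is superfluous is likewise confirmed by the paper's Proposition~\ref{prop:20}, which states Fuchs' estimate on arbitrary bounded open $\omega \subset \Cbb$.
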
 
\subsection*{Summary of results} Before this work appeared, there was no consensed understanding as to what (symbolic) properties of the Wirtinger derivative $\partial_{\bar z}$ were responsible for the validity of Fuchs' estimate.  Given that  $\partial_{\bar z}$ is an operator with rich analytical and geometrical properties, it had remained unclear whether the estimate stemmed from more particular properties than ellipticity. The results presented here establish that $\partial_{\bar z}$ belongs to a large class of constant-rank (possibly non-elliptic) operators, with {infinite-dimensional nullspace}, satisfying Sobolev-projection and Sobolev-distance estimates on {arbitrary} open (possibly disconnected) domains $\Omega$ of $\R^n$. \\

We will divide the exposition of our results as follows:

\begin{itemize}[left= 0pt,label=\S]
\item \textbf{Maximal-rank operators on arbitrary open domains.} In Theorem~\ref{thm:2}, we prove that the maximal-rank property is  sufficient for the validity of (unbounded) linear projection estimates on possibly irregular domains $\Omega \subset \R^n$. This conveys (see Corollary~\ref{cor:weak}) a hierarchy of lower-order distance estimates. 
\item \textbf{Maximal-rank operators on regular domains.} On domains with sufficiently regular boundary, we are able to improve the unbounded linear projection estimate to a linear bounded projection estimate. This, in turn, conveys a version of Theorem~\ref{thm:FM99} on regular domains. Lower-order projection estimates are also established. 
\item \textbf{Weak estimates for constant-rank operators on domains.} Lastly, we discuss the validity of \emph{weaker} distance estimates on arbitrary domains. In Theorem~\ref{thm:Z} we establish that the Sobolev distance from $N_p(\Delta_\Acal,\Omega)$ is bounded in terms of $\Acal u$. (\,\footnote{Weak in the sense that $N_p(\Delta_\Acal,\Omega)$ is usually considerably larger than $N_p(\Acal,\Omega)$.}\,)
\end{itemize} 
\subsection*{Comments about the proof} The main ingredient of the proof(s) is contained in Lemma~\ref{lem:existence}, which can be labeled as an \emph{existence and regularity result for maximal-rank operators}. There, we show that if $\Acal$ has a maximal rank, then there exists a kernel 
\[
K \in L^1_\loc(\R^n;\Hom(W,V))
\]
 acting as a \emph{true fundamental solution}  
	\[
		\Acal K = \delta_{0_W} \quad \text{on $\R^n$\,,}
	\]
	where $\delta_{0_W}$ is the Dirac mass at $0 \in W$. This gives rise to a convolution-type solution operator on $\Omega$, for which we can show classical Sobolev estimates. Although similar solution kernels also exist for general constant-rank operators (see, e.g.,~\cite{BVS,BR,raita2019potentials}), these, in general, only satisfy the weaker fundamental identity
	\[
		\Acal (K \star  \Acal u) = u \quad \text{for all $u \in C_c^\infty(\Omega;V)$}\,.
	\] 
It is precisely for this step that the maximal-rank assumption is crucially used to guarantee there exists a convolution solution operator $\Acal^{-1} : L^p(\Omega;W) \to W^{k,p}(\Omega;V)$  such that, for \emph{any} right-hand side $f \in L^p(\Omega;W)$, the element $\Acal^{-1}[f] \in L^p(\Omega;V)$ solves the system
	\[
		\Acal u = f \qquad \text{on $\Omega$\,,}
	\]
	together with the Sobolev estimate
	\[
		\|\Acal^{-1}[f]\|_{W^{k,p}(\Omega)} \le C \|f\|_{L^{p}(\Omega)}\,.
	\]
	Then, one simply defines the projection $Tu \coloneqq u - \Acal^{-1}[\Acal u]$. The regularity of the domain is only used to deduce the linearity of the inverse operator $\Acal^{-1}$, when considering spaces of maps with less regular properties (such as $L^p$ or $W^{r,p}$ with $r < k$).


\section{Main results}

\subsection[Estimates on domains]{Estimates on arbitrary domains} 


Our first result establishes that if $\Acal$ has maximal rank, 
 then Sobolev estimates (away from $N_p(\Acal,\Omega)$) on the domain
 \[
 W^{\mathcal A,p}(\Omega) \coloneqq \set{u \in L^p(\Omega;V)}{\Acal u \in L^p(\Omega;W)}\,,
 \] 
 hold for arbitrary sub-domains of $\Omega \subset \Rbb^n$: 

\begin{theorem}[Unbounded projection on arbitrary domains]\label{thm:2} Let $\Acal $ be a maximal-rank operator of order $k$, from $V$ to $W$. 	Then, there exists an unbounded linear map $$T : W^{\mathcal A,p}(\Omega) \subset L^p(\Omega;V) \longrightarrow L^p(\Omega;V)$$
	satisfying the following properties: if $u \in W^{\Acal,p}(\Omega)$, then
	\begin{enumerate}[label=\arabic*., left=15pt]
	\item $T$ is a projection onto $N_p(\Acal,\Omega)$, that is, 
	\[
	R(T) = N_p(\Acal,\Omega) \quad \text{and} \quad 
	T( T u) = T u\,.
	\]
\item There exists a constant $C$ depending solely on  $p,\diam \Omega$, $A$ such that
	\[
		\|T u\|_{L^p(\Omega)} \le \|u\|_{L^p(\Omega)} + C\, \|\Acal  u\|_{L^{p}(\Omega)},
	\]
 \item and
	 \[
		\| u - Tu \|_{W^{k,p}(\Omega)} \le C\,  \|\Acal  u\|_{L^{p}(\Omega)}\,.
	\]
\end{enumerate}
\end{theorem}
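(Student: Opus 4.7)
The proof will be essentially a direct application of the fundamental solution constructed in Lemma~\ref{lem:existence}, together with classical convolution bounds: Young's inequality for the $L^p$ estimate and the Calder\'on--Zygmund theory for the top-order Sobolev estimate. Concretely, since $\mathcal{A}$ has maximal rank, the lemma provides a kernel $K \in L^1_\loc(\R^n; \Hom(W,V))$ satisfying $\mathcal{A} K = \delta_{0_W}$ in the sense of distributions, and by the $k$-homogeneity of $\mathcal{A}$, the kernel $K$ is positively $(k-n)$-homogeneous so that $D^k K$ is a Calder\'on--Zygmund (i.e.\ $(-n)$-homogeneous and mean-zero on spheres) convolution kernel.

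The plan is to define the correction operator $S u := K \star (\ONE_\Omega \mathcal{A} u)$, where $\ONE_\Omega \mathcal{A} u$ denotes the extension of $\mathcal{A} u$ by zero to all of $\R^n$, and then set
\[
T u \coloneqq u - S u\big|_\Omega, \qquad u \in W^{\mathcal{A},p}(\Omega).
\]
First I would verify that $T$ lands in $N_p(\mathcal{A},\Omega)$: since $\mathcal{A}(Su) = \mathcal{A} K \star (\ONE_\Omega \mathcal{A} u) = \delta_{0_W} \star (\ONE_\Omega \mathcal{A} u) = \ONE_\Omega \mathcal{A} u$, we have $\mathcal{A}(Tu) = \mathcal{A} u - \mathcal{A}u = 0$ on $\Omega$. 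For the projection identity, if $u \in N_p(\mathcal{A},\Omega)$ then $\mathcal{A} u \equiv 0$, so $Su = 0$ and $Tu = u$; this gives $T(Tu) = Tu$ and $R(T) = N_p(\mathcal{A},\Omega)$. Linearity is immediate from the linearity of the extension and of convolution.

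For the $L^p$ estimate, I use that $\Omega$ is bounded, so for any $x,y \in \Omega$ we have $x - y$ lying in the bounded ball $B(0, \diam \Omega)$. Therefore in the convolution $(Su)(x) = \int_\Omega K(x-y)\, \mathcal{A} u(y)\dd y$ the kernel is effectively truncated to $K \ONE_{B(0,\diam\Omega)}$, which belongs to $L^1(\R^n)$ since $K$ is locally integrable. Young's inequality then gives
\[
\|S u\|_{L^p(\Omega)} \le \|K\|_{L^1(B(0,\diam \Omega))} \, \|\mathcal{A} u\|_{L^p(\Omega)} = C(p, \diam \Omega, A)\, \|\mathcal{A} u\|_{L^p(\Omega)},
\]
and the triangle inequality yields estimate~2. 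For estimate~3 I combine this $L^p$ bound (controlling the zeroth-order piece $\|u - Tu\|_{L^p} = \|Su\|_{L^p}$) with the Calder\'on--Zygmund bound
\[
\|D^k S u\|_{L^p(\Omega)} \le \|(D^k K) \star (\ONE_\Omega \mathcal{A} u)\|_{L^p(\R^n)} \le C(p,A)\, \|\mathcal{A} u\|_{L^p(\Omega)},
\]
valid since $D^k K$ is a classical singular convolution kernel of Calder\'on--Zygmund type. Interpolating the intermediate derivatives of $S u$ between these two endpoints (or, equivalently, appealing to Riesz-potential mapping properties applied to the $(k-j)$-homogeneous kernels $D^j K$ on a bounded domain) produces the full $W^{k,p}$ bound with a constant depending only on $p$, $\diam \Omega$ and $A$.

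The main subtlety—and the only place where one must be careful—is ensuring the convolution $K \star (\ONE_\Omega \mathcal{A} u)$ actually realizes both identities: the fundamental distributional identity $\mathcal{A}(K \star f) = f$ for $f \in L^p(\R^n)$ of compact support, and the classical singular-integral bound for $D^k(K \star f)$. Both rest on the maximal-rank hypothesis through Lemma~\ref{lem:existence}: without maximal rank, only the weaker identity $\mathcal{A}(K \star \mathcal{A} u) = u$ for $u \in C_c^\infty$ is available, and one cannot solve $\mathcal{A} u = f$ for arbitrary right-hand sides $f$. Note that boundedness of $T : L^p \to L^p$ is not asserted here; only the unbounded projection property is claimed, which is why no regularity of $\partial \Omega$ is needed in this theorem.
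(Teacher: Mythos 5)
Your proposal is correct and follows essentially the same route as the paper. The paper's proof sets $Tu := u - \mathcal{A}^{-1}[\mathcal{A} u]$ where $\mathcal{A}^{-1}[f] = (K \star \tilde f)|_\Omega$ is the solution operator built in Lemma~\ref{lem:existence}, which is exactly your $u - Su|_\Omega$; the paper then simply cites the Sobolev bound $\|\mathcal{A}^{-1}[f]\|_{W^{k,p}(\Omega)} \le C \|f\|_{L^p(\Omega)}$ from that lemma, while you re-derive it inline (Young's inequality on a truncated kernel for the $L^p$ piece, a singular-integral bound for the top-order derivatives, interpolation in between — the paper uses Poincar\'e iteration for the intermediate derivatives but this is an equivalent route).

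One small imprecision worth flagging: you assert $K$ is exactly $(k-n)$-homogeneous and that $D^k K$ is a classical Calder\'on--Zygmund kernel (homogeneous of degree $-n$ and mean-zero on spheres). In fact, when $k \ge n$ the fundamental solution acquires a logarithmic correction $K(x) = |x|^{\ell}\Psi(x/|x|) - Q(x)\log|x|$ with $\ell = k-n$, so $K$ is only quasi-homogeneous; moreover the spherical cancellation of $D^k K$ is not verified in your argument. The paper sidesteps both points by applying Mihlin's multiplier theorem directly to the Fourier symbol $p^\alpha(\xi) A(\xi)^\dagger$, which is smooth away from the origin and $0$-homogeneous regardless of the log term — that is the cleaner way to close this step, and you should substitute it for the CZ-kernel claim.
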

\begin{remark}
When $\Omega$ is a Lipschitz domain,  the projection $T$ can be extended to a bounded linear map on $L^p(\Omega;V)$; see Theorem~\ref{thm:FM} below.
\end{remark}
\begin{corollary}[Compactness] Assume that $\Omega$ is a Lipschitz domain and set $P \coloneqq \id - T$. Let $\Xcal \subset L^p(\Omega;V)$ be a family  satisfying
\[
	\sup_{u \in \Xcal} \|\Acal u\|_{L^p(\Omega)} < \infty\,.
\]
Then $P[\Xcal]$ is pre-compact in $L^p(\Omega;V)$ (and in $W^{k-1,p}(\Omega;V)$ whenever $k > 1$).
\end{corollary}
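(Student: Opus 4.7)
The plan is to reduce the statement to a direct application of the third estimate in Theorem~\ref{thm:2} followed by a classical Rellich--Kondrachov compactness argument; essentially all the analytic work has already been done in establishing Theorem~\ref{thm:2}.

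First I would observe that the assumption $\sup_{u \in \Xcal}\|\Acal u\|_{L^p(\Omega)} < \infty$ implicitly forces $\Xcal \subset W^{\Acal,p}(\Omega)$, so the map $P = \id - T$ is defined on every element of $\Xcal$. Then, applying the third inequality of Theorem~\ref{thm:2} with the constant $C = C(p,\diam \Omega, A)$ yields
\[
\|Pu\|_{W^{k,p}(\Omega)} \;=\; \|u - Tu\|_{W^{k,p}(\Omega)} \;\le\; C\,\|\Acal u\|_{L^p(\Omega)}
\qquad\text{for every }u \in \Xcal.
\]
Taking the supremum over $\Xcal$ and using the hypothesis, we deduce that $P[\Xcal]$ is a bounded subset of $W^{k,p}(\Omega;V)$; note that no $L^p$-control on $\Xcal$ itself is needed, since the right-hand side above does not involve $\|u\|_{L^p(\Omega)}$.

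To conclude, I would invoke the Rellich--Kondrachov theorem on the bounded Lipschitz domain $\Omega$, which furnishes the compact embeddings
\[
W^{k,p}(\Omega;V) \;\cembed\; L^p(\Omega;V) \quad\text{and, when } k>1,\quad W^{k,p}(\Omega;V) \;\cembed\; W^{k-1,p}(\Omega;V).
\]
Consequently the bounded family $P[\Xcal] \subset W^{k,p}(\Omega;V)$ is pre-compact in $L^p(\Omega;V)$ (and in $W^{k-1,p}(\Omega;V)$ when $k>1$), as claimed.

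Regarding potential obstacles: there are essentially none beyond what has already been achieved in Theorem~\ref{thm:2}. The only substantive fact used externally is the compactness of the Rellich--Kondrachov embedding, which is standard on bounded Lipschitz domains. The Lipschitz hypothesis on $\partial\Omega$ plays a dual role here—it is needed both to apply Theorem~\ref{thm:2} in the form giving a well-defined $T$ on all of $W^{\Acal,p}(\Omega)$ (per Remark following the theorem) and to guarantee the compact embeddings above; note in particular that one cannot replace $W^{k,p}$ compactness into $L^p$ by the weaker Sobolev bound on arbitrary open sets, as compactness may fail without boundary regularity.
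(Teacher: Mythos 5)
Your proof is correct and uses what is surely the intended argument: estimate~3 of Theorem~\ref{thm:2} bounds $P[\Xcal]$ in $W^{k,p}(\Omega;V)$ by $\sup_{\Xcal}\|\Acal u\|_{L^p}$, and Rellich--Kondrachov on the Lipschitz domain then delivers pre-compactness in $L^p$ (and in $W^{k-1,p}$ when $k>1$). The paper itself does not print a proof of this corollary, but this is the natural two-line deduction from Theorem~\ref{thm:2}.

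One small inaccuracy in your closing remarks: the Lipschitz hypothesis does \emph{not} play a role in making $T$ well-defined. Theorem~\ref{thm:2} already defines $T$ on $W^{\Acal,p}(\Omega)$ for an \emph{arbitrary} bounded open set $\Omega$, and since the hypothesis $\Xcal\subset L^p$ together with $\sup_{\Xcal}\|\Acal u\|_{L^p}<\infty$ forces $\Xcal\subset W^{\Acal,p}(\Omega)$, the unbounded $T$ from Theorem~\ref{thm:2} already applies to every $u\in\Xcal$ without any boundary regularity. The remark you cite concerns extending $T$ to a \emph{bounded} operator on all of $L^p(\Omega;V)$, which is not used here. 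So the Lipschitz assumption enters solely through the compactness of the embedding $W^{k,p}(\Omega;V)\cembed L^p(\Omega;V)$ (and $W^{k,p}\cembed W^{k-1,p}$), which can indeed fail on rough domains; your identification of that obstruction is the right one, you just overstated the second role.
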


Another implication is the following decomposition:
\begin{corollary}[Helmholtz-type decomposition]\label{cor:H1} The space $W^{\Acal,p}(\Omega)$ decomposes as a topological sum (of closed subspaces with trivial intersection)
\[
	W^{\Acal,p}(\Omega) = N_p(\Acal,\Omega) \oplus \Acal^{-1}[L^p(\Omega;W)]\,,
\]
where $\Acal^{-1} : L^p(\Omega;W) \to W^{k,p}(\Omega;V)$ is the solution operator constructed in Lemma~\ref{lem:existence}.
That is, every $u \in W^{\Acal,p}(\Omega)$ may be uniquely decomposed as
\[
	u = v + w, \qquad v \in N_p(\Acal,\Omega),\quad w \in \Acal^{-1}[L^p(\Omega;V)]\,.
\]	
Moreover, in this case, $w$ satisfies the Sobolev estimate
\[
	\|w\|_{W^{k,p}(\Omega)} \le C(p,\diam\Omega,A)\, \|\Acal u\|_{L^p(\Omega)}\,.
\]
\end{corollary}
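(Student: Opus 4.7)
The plan is to read off the decomposition directly from the identity $Tu = u - \Acal^{-1}[\Acal u]$ that defines $T$ in the proof sketch of Theorem~\ref{thm:2}. For existence, given $u \in W^{\Acal,p}(\Omega)$, set $v \coloneqq Tu$ and $w \coloneqq \Acal^{-1}[\Acal u]$. Theorem~\ref{thm:2}(1) gives $v \in N_p(\Acal,\Omega)$, while $\Acal u \in L^p(\Omega;W)$ (by definition of $W^{\Acal,p}(\Omega)$) places $w$ in $\Acal^{-1}[L^p(\Omega;W)]$. The identity $u = v + w$ is then immediate, and the bound $\|w\|_{W^{k,p}(\Omega)} \le C \|\Acal u\|_{L^p(\Omega)}$ is precisely part~(3) of Theorem~\ref{thm:2}.

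For uniqueness, I would verify the trivial intersection $N_p(\Acal,\Omega) \cap \Acal^{-1}[L^p(\Omega;W)] = \{0\}$. If $z$ lies in both, then $z = \Acal^{-1}[f]$ for some $f \in L^p(\Omega;W)$, and applying $\Acal$ together with the fundamental identity $\Acal \Acal^{-1}[f] = f$ from Lemma~\ref{lem:existence} forces $f = \Acal z = 0$. The linearity of $\Acal^{-1}$ — granted by its construction in Lemma~\ref{lem:existence} as convolution with the fundamental solution $K$ applied to the zero-extension of its argument — then yields $z = \Acal^{-1}[0] = 0$.

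It remains to check closedness of both summands in the graph norm of $W^{\Acal,p}(\Omega)$, which together with the previous two steps delivers the topological direct sum. The space $N_p(\Acal,\Omega)$ is closed, being the kernel of the continuous map $\Acal : W^{\Acal,p}(\Omega) \to L^p(\Omega;W)$. For $\Acal^{-1}[L^p(\Omega;W)]$, if $w_n = \Acal^{-1}[f_n] \to w$ in $W^{\Acal,p}(\Omega)$, then $f_n = \Acal w_n$ converges to $\Acal w$ in $L^p(\Omega;W)$; the Sobolev bound $\|\Acal^{-1}[g]\|_{W^{k,p}(\Omega)} \le C \|g\|_{L^p(\Omega)}$ from Lemma~\ref{lem:existence}, combined with linearity, shows that $\Acal^{-1}[f_n] \to \Acal^{-1}[\Acal w]$ in $W^{k,p}(\Omega)$, so $w = \Acal^{-1}[\Acal w]$ belongs to the image. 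Continuity of the two associated projections is a direct consequence of parts~(2)--(3) of Theorem~\ref{thm:2}. The only non-routine ingredient I anticipate is the linearity of $\Acal^{-1}$, for which the explicit convolution form furnished by Lemma~\ref{lem:existence} is essential; everything else is bookkeeping from the preceding theorem and lemma.
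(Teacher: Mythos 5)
Your proof is correct and follows essentially the same route as the paper's: existence of the decomposition via $u = Tu + \Acal^{-1}[\Acal u]$, uniqueness by applying $\Acal$ and exploiting $\Acal\Acal^{-1} = \mathrm{id}$, closedness of $N_p(\Acal,\Omega)$ as the kernel of the continuous map $\Acal$, and closedness of $\Acal^{-1}[L^p(\Omega;W)]$ by the continuity estimate of Lemma~\ref{lem:existence}. The only difference is cosmetic — you phrase uniqueness as a trivial-intersection check and explicitly note that continuity of the projections follows from Theorem~\ref{thm:2}(2)--(3), whereas the paper states the same uniqueness claim directly — so nothing to add.
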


\subsection*{Lower-order distance estimates}The projection estimates contained in Theorem~\ref{thm:2} convey a hierarchy of lower-order distance estimates in terms of suitable negative Sobolev norms of $\Acal u$.  To make this statement precise, let $\ell > 0$ be a positive integer and let us recall that $W^{\ell,p}_0(\Omega)$ is defined as the closure of $C^\infty_c(\Omega)$ under the $W^{\ell,p}$-norm. Following standard notation, we write $W^{-\ell,p}(\Omega)$ to denote the dual of $W^{\ell,q}_0(\Omega)$, where $p^{-1} + q^{-1} = 1$. Notice that when $\Omega$ is a bounded open set, then, by Poincare's inequality,  the classical Sobolev norm $\|\frarg\|_{W^{\ell,p}(\Omega)}$ and the homogeneous norm 
\[
\|u\|_{\dot W^{\ell,p}(\Omega)} \coloneqq \left(\sum_{|\alpha| = \ell} \|D^\alpha u\|_{L^p(\Omega)}^p\right)^\frac{1}{p}
\]
are equivalent norms {of} $W^{\ell,p}_0(\Omega)$. Therefore, also $W^{-\ell,p}(\Omega)$ coincides with the homogeneous negative Sobolev space $\dot W^{-\ell,p}(\Omega)$, which is defined as the dual of  $W^{\ell,p}_0(\Omega)$ when endowed with the homogeneous norm. In fact, their respective  norms are equivalent:
\[
	\|\frarg\|_{W^{-\ell,p}(\Omega)}  \, \simeq \, \|\frarg\|_{\dot W^{-\ell,p}(\Omega)}.
\]
For completeness, we set $W^{0,p}(\Omega) = \dot W^{0,p}(\Omega) \coloneqq L^p(\Omega)$.  With these considerations in mind, we state the following distance estimates for maximal-rank operators.
\begin{corollary}[Distance estimates]\label{cor:weak}
	 Let $0 \le r \le k$ be an integer and let $\Acal $ be a maximal-rank operator of order $k$, from $V$ to $W$. Then,
	 \[
	\min_{v \in N_p(\Acal,\Omega)} \| u - v\|_{W^{k -r,p}(\Omega)} \le C(p,r,\diam\Omega, A) \, \|\Acal u\|_{\dot W^{-r,p}(\Omega)}
	\]
for all $u \in L^p(\Omega;V)$ with $\Acal u \in \dot W^{-r,p}(\Omega;W)$.
	\end{corollary}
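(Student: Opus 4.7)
The plan is to reduce the claim to the base case $r=0$, which is precisely assertion (3) of Theorem~\ref{thm:2}, by unpacking the negative Sobolev norm as a sum of $r$\textsuperscript{th}-order derivatives of $L^p$ data and then applying the convolution solution operator produced in Lemma~\ref{lem:existence}. In what follows I assume $1 \le r \le k$; the case $r=0$ follows on taking $v = Tu$.

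First I would use the identification $\dot W^{-r,p}(\Omega) \simeq W^{-r,p}(\Omega)$ (explicitly recorded for bounded $\Omega$ in the text) together with the standard duality characterization of negative Sobolev spaces to write
\[
\Acal u \;=\; \sum_{|\alpha|\le r} D^\alpha g_\alpha \qquad \text{in } \mathscr D'(\Omega;W),
\]
with $g_\alpha \in L^p(\Omega;W)$ and $\sum_\alpha \|g_\alpha\|_{L^p(\Omega)} \le C\,\|\Acal u\|_{\dot W^{-r,p}(\Omega)}$, where $C=C(r,p,\diam\Omega)$. Since $\Acal$ has maximal rank, Lemma~\ref{lem:existence} supplies a bounded solution operator $\Acal^{-1}\colon L^p(\Omega;W)\to W^{k,p}(\Omega;V)$ satisfying $\Acal(\Acal^{-1}[g])=g$ on $\Omega$ for every $g\in L^p(\Omega;W)$. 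Setting $h_\alpha \coloneqq \Acal^{-1}[g_\alpha]$ and
\[
w \;\coloneqq\; \sum_{|\alpha|\le r} D^\alpha h_\alpha,
\]
the map $w$ lies in $W^{k-r,p}(\Omega;V)$ (each $D^\alpha h_\alpha$ is in $W^{k-|\alpha|,p}\subset W^{k-r,p}$ on the bounded set $\Omega$), and commuting $\Acal$ past constant-coefficient derivatives gives $\Acal w = \sum_\alpha D^\alpha g_\alpha = \Acal u$ in $\mathscr D'(\Omega;W)$. Therefore $v \coloneqq u - w \in L^p(\Omega;V)$ satisfies $\Acal v = 0$, i.e.\ $v \in N_p(\Acal,\Omega)$, and
\[
\|u-v\|_{W^{k-r,p}(\Omega)} \;=\; \|w\|_{W^{k-r,p}(\Omega)} \;\le\; C\sum_\alpha\|h_\alpha\|_{W^{k,p}(\Omega)} \;\le\; C\sum_\alpha\|g_\alpha\|_{L^p(\Omega)} \;\le\; C\|\Acal u\|_{\dot W^{-r,p}(\Omega)}.
\]
Attainment of the minimum (as opposed to the infimum) then follows by standard best-approximation from a closed affine subspace of the reflexive space $W^{k-r,p}(\Omega;V)$, applied to $w$ modulo $N_p(\Acal,\Omega)\cap W^{k-r,p}(\Omega;V)$.

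The main obstacle, and the step where the maximal-rank hypothesis is indispensable, is the use of the identity $\Acal(\Acal^{-1}[g]) = g$ for an \emph{arbitrary} $g\in L^p(\Omega;W)$. For a merely constant-rank operator one has only the weaker identity $\Acal(K\star \Acal u)=\Acal u$ on $\Acal$-compatible data (as the comments about the proof emphasize), and this is insufficient because the functions $g_\alpha$ arising from the $\dot W^{-r,p}$ representation need not lie in the image of $\Acal$ on any reasonable class. It is precisely the existence of a genuine fundamental solution $\Acal K = \delta_{0_W}$ established in Lemma~\ref{lem:existence} that allows the decomposition strategy above to close in a single pass, on arbitrary bounded open $\Omega$, matching the generality of Theorem~\ref{thm:2}.
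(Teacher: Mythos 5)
Your proof is correct, and it follows essentially the same route as the paper's: the paper writes $u = \tilde S\,\Acal u + (u - \tilde S\,\Acal u)$ with $\tilde S$ the extended solution operator from Lemma~\ref{lem:existence}, notes $u - \tilde S\,\Acal u \in N_p(\Acal,\Omega)$, and concludes from the estimate $\|\tilde S\,\Acal u\|_{W^{k-r,p}(\Omega)} \lesssim \|\Acal u\|_{\dot W^{-r,p}(\Omega)}$ proved in that lemma. What you have done is unwind the construction of $\tilde S$ inline rather than invoke it as a black box: your decomposition $\Acal u = \sum_{|\alpha|\le r} D^\alpha g_\alpha$, the application of $\Acal^{-1}$ to each $g_\alpha$, and the definition $w=\sum D^\alpha h_\alpha$ are precisely the ingredients the paper packages as $\tilde S_r \sigma = \sum_{|\beta|=r} D^\beta(\Acal^{-1}[(H_r\sigma)_\beta])$ in the proof of Lemma~\ref{lem:existence} (you use the inhomogeneous representation $|\alpha|\le r$ where the paper uses $|\beta|=r$, but these are interchangeable on bounded $\Omega$). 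Both versions hinge on the same two facts: $\Acal^{-1}$ is a genuine right inverse on all of $L^p(\Omega;W)$, which requires maximal rank, and constant-coefficient operators commute. Your closing remark on attainment of the minimum is a harmless extra; the estimate is already obtained by exhibiting the specific competitor $v=u-w$, so attainment is not needed for the stated inequality.
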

%
	
	\subsection{Bounded projections on regular domains} In general, the inverse operator $\Acal^{-1}$ (and hence also the projection operator $T$ in Theorem~\ref{thm:2}) may only be extended by a possibly non-linear map satisfying similar Sobolev estimates. If, however, $p = 2$, or if $\Omega$ is a sufficiently regular and connected domain, then one can extend $\Acal^{-1}$ linearly to a solution operator on $W^{-k,p}(\Omega;V)$. In the first case, this follows from the theory of Hilbert spaces, while in the latter case, it follows from the classical $L^p$-regularity theory for the higher-order Dirichlet problem on regular domains. In either case, we get the following generalization of the Fonseca--M\"uller projection estimates:
	\begin{theorem}[Fonseca--M\"uller projection estimate on domains]\label{thm:FM}Le $\Acal $ be a maximal-rank operator of order $k$, from $V$ to $W$.
Further, assume that $p =2$ or that $\Omega$ is a connected domain with smooth boundary  $\partial \Omega$. Then, the projection $T$  from Theorem~\ref{thm:2} extends to a {bounded} linear projection $$\cl T : L^p(\Omega;V) \longrightarrow L^p(\Omega;V),$$ satisfying the projection estimate 
\begin{align*}
\| u - \cl T u\|_{L^p(\Omega)} & \le C(p,\diam\Omega,\partial\Omega,A)\, \|\Acal u\|_{\dot W^{-k,p}(\Omega)}\,. 
\end{align*}
More generally, if $0 \le r \le k$ is an integer, then \[
\| u - \cl T u\|_{W^{k-r,p}(\Omega)} \le C(p,\diam\Omega,\partial \Omega,A,r) \, \|\Acal u\|_{\dot W^{-r,p}(\Omega)}
\]
for all $u \in L^p(\Omega;V)$ with $\Acal u \in \dot W^{-r,p}(\Omega)$. ( \footnote{Notice that $\Acal u \in \dot W^{-k,p}(\Omega)$ whenever $u \in L^p(\Omega;V)$.}\,)
\end{theorem}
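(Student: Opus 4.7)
The plan is to upgrade the solution operator $\Acal^{-1}:L^p(\Omega;W)\to W^{k,p}(\Omega;V)$ from Lemma~\ref{lem:existence} to a bounded linear \emph{extension} $\overline{\Acal^{-1}}:W^{-k,p}(\Omega;W)\to L^p(\Omega;V)$ that still satisfies the right-inverse identity $\Acal\circ\overline{\Acal^{-1}}=\id$ distributionally. Once this is available, I would define
\[
\cl T u \;\coloneqq\; u - \overline{\Acal^{-1}}[\Acal u],\qquad u\in L^p(\Omega;V),
\]
noting that $\Acal:L^p(\Omega;V)\to \dot W^{-k,p}(\Omega;W)$ is automatically bounded with $\|\Acal u\|_{\dot W^{-k,p}}\le C(A)\,\|u\|_{L^p}$ by distributional duality. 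The boundedness of $\cl T$ on $L^p$ is then immediate and the main estimate
\[
\|u-\cl T u\|_{L^p(\Omega)} \;=\; \|\overline{\Acal^{-1}}[\Acal u]\|_{L^p(\Omega)} \;\le\; C\,\|\Acal u\|_{\dot W^{-k,p}(\Omega)}
\]
falls out of the boundedness of $\overline{\Acal^{-1}}$. The projection identity $\cl T\cl T=\cl T$ follows from $\Acal\overline{\Acal^{-1}}=\id$, and agreement with $T$ on $W^{\Acal,p}(\Omega)$ is automatic because $\overline{\Acal^{-1}}$ coincides with $\Acal^{-1}$ on the dense subspace $L^p\subset W^{-k,p}$.

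The crux of the argument is therefore the construction of the \emph{linear} extension $\overline{\Acal^{-1}}$, and this is where the dichotomy enters. For $p=2$, I would exploit the Hilbert-space structure: the bounded map $\Acal:L^2(\Omega;V)\to H^{-k}(\Omega;W)$ has closed kernel $N_2(\Acal,\Omega)$, and Lemma~\ref{lem:existence} already supplies a bounded right inverse from $L^2$, so the Banach closed-range theorem implies that $\Acal$ restricted to $N_2(\Acal,\Omega)^\perp$ is a Banach-space isomorphism onto a closed subspace of $H^{-k}$; composing its inverse with the $H^{-k}$-orthogonal projection onto this range yields the desired linear extension. For general $p\in(1,\infty)$ on a smooth connected $\Omega$, I would instead invoke the $L^p$-regularity theory for elliptic Dirichlet problems: the maximal-rank property permits reducing $\Acal u=f$ to a solvable elliptic boundary value problem of order $2k$ (e.g.\ via a normal-form like $\Acal^\ast\Acal$ on $N_p(\Acal,\Omega)^\perp$, combined with zero-extension of $f\in W^{-k,p}(\Omega)\simeq (W^{k,q}_0(\Omega))^\ast$ across $\partial\Omega$), whose solution depends linearly and continuously on the datum by Agmon--Douglis--Nirenberg / Calderón--Zygmund theory.

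For the hierarchy of intermediate bounds $\|u-\cl T u\|_{W^{k-r,p}(\Omega)}\le C\,\|\Acal u\|_{\dot W^{-r,p}(\Omega)}$ with $0\le r\le k$, I would interpolate the two endpoint mapping properties of $\overline{\Acal^{-1}}$ now in hand, namely $L^p\to W^{k,p}$ (from Lemma~\ref{lem:existence}, i.e.\ $r=0$) and $W^{-k,p}\to L^p$ (the extension just constructed, i.e.\ $r=k$), using either real or complex interpolation along the negative-Sobolev scale on $\Omega$. For integer $r$ this can be done more directly by re-running the argument of Lemma~\ref{lem:existence} with a shifted number of derivatives, invoking the fact that the fundamental solution $K$ has convolution order $-k$.

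The main obstacle is Step~1, the construction of a \emph{linear} extension of $\Acal^{-1}$ to $W^{-k,p}$: a Lipschitz-continuous selection among solutions exists for arbitrary $p$ and general open $\Omega$, but linearity demands either the self-dual Hilbert structure at $p=2$ or the full $L^p$-regularity machinery available only on smooth (connected) domains, which is precisely the reason for the stated hypothesis. Once this step is overcome, the remainder of the proof is essentially bookkeeping: boundedness, projection identity, agreement with $T$, and the interpolated hierarchy of estimates all follow formally from $\Acal\overline{\Acal^{-1}}=\id$ and the endpoint mapping properties.
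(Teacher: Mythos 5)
Your high-level strategy matches the paper's: build a bounded linear extension $\overline{\Acal^{-1}}:\dot W^{-k,p}(\Omega;W)\to L^p(\Omega;V)$ of the convolution solution operator, set $\cl T u = u - \overline{\Acal^{-1}}[\Acal u]$, and observe that boundedness, the projection identity, and agreement with $T$ on $W^{\Acal,p}(\Omega)$ follow formally. You also correctly locate the crux (linearity of the extension) and the source of the dichotomy in the hypotheses. The estimates you wrote down for $\cl T$ are essentially the same ones the paper uses.

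Where you diverge — and where there is a genuine gap — is the mechanism for constructing the linear extension on a smooth connected domain for $p\neq 2$. You propose reducing $\Acal u = f$ to an elliptic boundary value problem \emph{built from $\Acal$ itself}, ``e.g.\ via a normal-form like $\Acal^*\Acal$ on $N_p(\Acal,\Omega)^\perp$.'' Two problems. First, for a maximal-rank (generally non-elliptic) operator the symbol $A(\xi)^*A(\xi):V\to V$ has kernel $\ker A(\xi)$, which is typically nontrivial; thus $\Acal^*\Acal$ is \emph{not} elliptic and ADN/Calder\'on--Zygmund theory does not apply to it. (The surjective side gives ellipticity of $\Acal\Acal^*:W\to W$, not $\Acal^*\Acal$.) Second, the complement $N_p(\Acal,\Omega)^\perp$ is a Hilbert-space object and is simply not available in $L^p$ for $p\neq 2$, so this cannot be the normal form. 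The paper sidesteps both issues by completely \emph{decoupling} the linearity problem from the operator $\Acal$: Proposition~\ref{lem:Sob} constructs, on a smooth connected domain, a universal (operator-independent) bounded linear lifting $R:\dot W^{-k,p}(\Omega)\to L^p(\Omega)^{\binom{N+k-1}{k}}$ with $(-1)^k\diverg^k R\sigma=\sigma$, obtained by solving the classical polyharmonic Dirichlet problem $\Delta^k u = \sigma$ with $u\in W^{k,p}_0(\Omega)$. The linear extension is then simply $\tilde S\sigma = \sum_{|\beta|=k} D^\beta\bigl(\Acal^{-1}[(R\sigma)_\beta]\bigr)$, chaining this universal lifting with the already-linear convolution operator $\Acal^{-1}$ from the first part of Lemma~\ref{lem:existence}; this is what makes the extension linear and bounded without ever needing regularity theory for any $\Acal$-dependent boundary value problem. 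Finally, for the hierarchy $0\le r\le k$, you suggest interpolation; the paper instead re-runs the same construction with $\diverg^r$ and $\Delta^r$ in place of $\diverg^k$ and $\Delta^k$ (producing $\tilde S_r$ directly), which you also mention as an alternative — that direct route is what the paper actually does and is cleaner for integer $r$.
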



\begin{corollary}\label{cor:R} Under the same assumptions of the Theorem~\ref{thm:FM}, the space $L^p(\Omega; V) $ decomposes as a topological sum (of closed subspaces)
\[
	L^p(\Omega;V) = N_p(\Acal,\Omega) \oplus \Acal^{-1}[\dot W^{-k,p}(\Omega;W)], 
\]
where $\Acal^{-1} : \dot W^{-k,p}(\Omega;W) \to L^p(\Omega;V)$ is the extended linear solution operator  from Lemma~\ref{lem:existence}.
More precisely, every $u \in L^p(\Omega; V)$ may be uniquely decomposed as 
\[
	u = v + w, \qquad v \in N_p(\Acal,\Omega), \quad w \in  \Acal^{-1}[\dot W^{-k,p}(\Omega;W)],
\]
with 
\[
 \|w\|_{L^p(\Omega)} \le C(p,\diam \Omega,\partial \Omega,A)\,\|\Acal u\|_{\dot W^{-k,p}(\Omega)}\,.
\]
\end{corollary}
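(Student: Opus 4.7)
The plan is to read off the decomposition directly from the bounded projection $\cl T$ produced by Theorem~\ref{thm:FM}. For $u \in L^p(\Omega;V)$, I would set $v \coloneqq \cl T u$ and $w \coloneqq u - \cl T u$. Since $\cl T$ is a bounded linear projection with range $N_p(\Acal,\Omega)$, one immediately has $v \in N_p(\Acal,\Omega)$, and the desired estimate $\|w\|_{L^p(\Omega)} \le C\|\Acal u\|_{\dot W^{-k,p}(\Omega)}$ is exactly the case $r=k$ of the projection estimate in Theorem~\ref{thm:FM}. The closedness of both summands is then automatic: $N_p(\Acal,\Omega)$ is the kernel of $\Acal\colon L^p(\Omega;V)\to \dot W^{-k,p}(\Omega;W)$ (equivalently, the range of the bounded projection $\cl T$), and the other summand will turn out to be the kernel of $\cl T$, which is closed since $\cl T$ is continuous.

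The content to verify is that $w$ actually lies in $\Acal^{-1}[\dot W^{-k,p}(\Omega;W)]$. Here I would invoke the construction sketched in \emph{Comments about the proof}: on $W^{\Acal,p}(\Omega)$ one has the explicit formula $T u = u - \Acal^{-1}[\Acal u]$. Under the hypotheses of Theorem~\ref{thm:FM}, the solution operator of Lemma~\ref{lem:existence} extends to a bounded linear map $\Acal^{-1}\colon \dot W^{-k,p}(\Omega;W)\to L^p(\Omega;V)$ (via the Hilbert structure for $p=2$ or via $L^p$-regularity for the higher-order Dirichlet problem on smooth $\partial\Omega$), and $\Acal\colon L^p(\Omega;V)\to \dot W^{-k,p}(\Omega;W)$ is bounded. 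Consequently the map $u\mapsto u - \Acal^{-1}[\Acal u]$ is bounded on $L^p(\Omega;V)$ and agrees with $T$ on the dense subspace $W^{\Acal,p}(\Omega)$, so by uniqueness of continuous extension it coincides with $\cl T$ on all of $L^p(\Omega;V)$. Thus $w = \Acal^{-1}[\Acal u] \in \Acal^{-1}[\dot W^{-k,p}(\Omega;W)]$, as required.

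For uniqueness of the decomposition, suppose $u = v_1 + w_1 = v_2 + w_2$ with $v_i \in N_p(\Acal,\Omega)$ and $w_i = \Acal^{-1}[g_i]$ for some $g_i \in \dot W^{-k,p}(\Omega;W)$. Then $v_1 - v_2 = \Acal^{-1}[g_2 - g_1]$, and applying $\Acal$ to both sides gives $0 = \Acal\,\Acal^{-1}[g_2 - g_1] = g_2 - g_1$, where I use the identity $\Acal\,\Acal^{-1} = \id$ on $\dot W^{-k,p}(\Omega;W)$ guaranteed by Lemma~\ref{lem:existence}. Hence $w_1 = w_2$ and $v_1 = v_2$. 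Combined with the previous paragraph, this shows $L^p(\Omega;V) = N_p(\Acal,\Omega) \oplus \Acal^{-1}[\dot W^{-k,p}(\Omega;W)]$ as an internal direct sum; the projections onto each summand are $\cl T$ and $\id - \cl T$, both bounded, so the sum is topological.

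The main subtlety, as I see it, is the identification $\cl T u = u - \Acal^{-1}[\Acal u]$ at the level of $L^p(\Omega;V)$: one must make sure that the linear extension of $\Acal^{-1}$ used in the proof of Theorem~\ref{thm:FM} to extend $T$ to $\cl T$ is precisely the one provided by Lemma~\ref{lem:existence}, and that $\Acal\,\Acal^{-1}$ continues to equal the identity on $\dot W^{-k,p}(\Omega;W)$ after extension. Once this compatibility is in place, the rest of the corollary reduces to the elementary algebra of a continuous projection on a Banach space.
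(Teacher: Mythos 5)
Your proposal is correct and follows essentially the same route as the paper: the paper's proof of Theorem~\ref{thm:FM} and this corollary likewise defines $\cl T u = u - \Acal^{-1}[\Acal u]$ with $\Acal^{-1}$ the linear extension from Lemma~\ref{lem:existence}, reads off the estimate from the $r=k$ case, and obtains uniqueness by applying $\Acal$. Your observation that closedness of both summands is automatic (as range and kernel of a bounded projection) is a slightly cleaner packaging than the sequential argument sketched in Corollary~\ref{cor:H1}, but the underlying reasoning is the same.
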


\begin{remark}
These two results hold under significantly milder regularity assumptions on $\partial \Omega$. However, we will avoid being more precise about it as it falls out of our primary objective. 
\end{remark}
	

\subsubsection{Estimates for $L^1$-gradients} We can make use of the previous results to establish certain \emph{sub-critical} Sobolev estimates when $\Acal u$ is either representable by an integrable map or a Radon measure. Let us first recall that the total variation of a distribution $\sigma \in \mathscr D'(\Omega;W)$, on a Borel subset $U \subset \Omega$, is defined as the (possibly infinite) non-negative quantity
\[
|\sigma|(U) \coloneqq \sup\set{\sigma[\varphi]}{\varphi \in C_c^\infty(U;W),\|\varphi\|_\infty \le 1}.
\]
The space 
\[
	\Mcal_b(\Omega;W) \coloneqq (C_0(\Omega;W))^*,
\]
of finite $W$-valued Radon measures over $\Omega$, coincides with the subspace of distributions $\sigma \in \mathscr D'(\Omega;W)$ with finite total variation $|\sigma|(\Omega)$.
Notice that if $\sigma \in L^1(\Omega;W)$, then the absolutely continuous measure $\sigma \, \mathscr L^n$ has bounded variation on $\Omega$ and in fact $\|\sigma\|_{L^1(\Omega)} = |\sigma\,\mathscr L^n|(\Omega)$.   Morrey's embedding and our previous results imply the following projection estimates for functions whose $\Acal$-gradient is a measure:
\begin{corollary}\label{cor:measures}Let $1 < q < \frac {n}{n-1}$ and let $\Acal$ be a maximal-rank operator of order $k$, from $V$ to $W$.
Then, 
\[
	\min_{v \in N_q(\Acal,\Omega)} \|u - v\|_{W^{k-1,q}(\Omega)} \le C(n,q,\diam \Omega,A) |\Acal u|(\Omega)
\]
for all $u \in L^q(\Omega;V)$. If moreover $\Omega$ is connected with smooth boundary  $\partial \Omega$, 
then also
\[
	\|u - \cl T u\|_{W^{k-1,q}(\Omega)} \le C(n,q,\diam \Omega,\partial\Omega,A) |\Acal u|(\Omega) 
\]
for all $u \in L^q(\Omega;V)$. ( \footnote{Here, $\cl T$ is the projection from  Theorem~\ref{thm:FM} with exponent $p = q$.}\textsuperscript{,}\footnote{The critical $W^{k-1,n/(n-1)}$ estimate fails for all nontrivial maximal-rank operators; for this we refer the reader to Rem.~\ref{rem:fdn} and to the main result in~\cite{vans2013jems}.}\,)
\end{corollary}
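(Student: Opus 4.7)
The plan is to reduce the corollary to the distance and projection estimates of Corollary~\ref{cor:weak} and Theorem~\ref{thm:FM} applied at level $r=1$, via a Morrey-duality embedding of $\Mcal_b(\Omega;W)$ into $\dot W^{-1,q}(\Omega;W)$.

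The core observation is that when $1 < q < n/(n-1)$, the conjugate exponent satisfies $q' > n$, so Morrey's inequality (applied to the trivial extension of test fields to all of $\R^n$) yields
\[
\|\varphi\|_{L^\infty(\Omega)} \le C(n,q)\,\|D\varphi\|_{L^{q'}(\Omega)} \qquad \text{for all $\varphi \in W^{1,q'}_0(\Omega;W)$.}
\]
By the hypothesis $|\Acal u|(\Omega) < \infty$, the distribution $\Acal u$ is a finite Radon measure in $\Mcal_b(\Omega;W)$, and testing against $\varphi \in C_c^\infty(\Omega;W)$ gives
\[
|\dpr{\Acal u,\varphi}| \le \|\varphi\|_{L^\infty(\Omega)}\,|\Acal u|(\Omega) \le C(n,q)\,|\Acal u|(\Omega)\,\|\varphi\|_{\dot W^{1,q'}(\Omega)}.
\]
Since $C_c^\infty(\Omega;W)$ is dense in $W^{1,q'}_0(\Omega;W)$, the distribution $\Acal u$ extends uniquely to an element of $\dot W^{-1,q}(\Omega;W)$ with
\[
\|\Acal u\|_{\dot W^{-1,q}(\Omega)} \le C(n,q)\,|\Acal u|(\Omega).
\]

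Both claimed inequalities now follow by a direct application of the earlier results at level $r=1$. For the first, Corollary~\ref{cor:weak} with $p=q$ and $r=1$ produces some $v \in N_q(\Acal,\Omega)$ realizing
\[
\|u-v\|_{W^{k-1,q}(\Omega)} \le C(q,\diam\Omega,A)\,\|\Acal u\|_{\dot W^{-1,q}(\Omega)},
\]
and absorbing the constant from the Morrey step yields the bound with a constant depending on $(n,q,\diam\Omega,A)$. For the second, under the additional assumption that $\Omega$ is connected with smooth boundary, Theorem~\ref{thm:FM} with $r=1$ supplies the bounded projection $\cl T$ together with
\[
\|u-\cl T u\|_{W^{k-1,q}(\Omega)} \le C(q,\diam\Omega,\partial\Omega,A)\,\|\Acal u\|_{\dot W^{-1,q}(\Omega)},
\]
and composing with the embedding estimate gives the stated inequality.

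No serious obstacle is anticipated. The only delicate point is the identification of $\Acal u$, which a priori is only a distribution, both with its representative as a measure in $\Mcal_b(\Omega;W)$ and with its extension to a functional on $W^{1,q'}_0(\Omega;W)$. This is a routine consequence of the density of $C_c^\infty(\Omega;W)$ in $W^{1,q'}_0(\Omega;W)$ (by definition) and in $C_0(\Omega;W)$ (in the sup-norm), together with the finiteness of the total variation, which pins down the extension uniquely.
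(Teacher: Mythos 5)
Your proposal is correct and follows essentially the same route as the paper: use Morrey's embedding $W_0^{1,q'}(\Omega)\hookrightarrow C_0(\Omega)$ (for $q'>n$), dualize to obtain $\Mcal_b(\Omega;W)\hookrightarrow \dot W^{-1,q}(\Omega;W)$, and then invoke Corollary~\ref{cor:weak} and Theorem~\ref{thm:FM} at level $r=1$. In fact your write-up is a bit more careful than the paper's terse version, which only cites Theorem~\ref{thm:FM} and omits mention of Corollary~\ref{cor:weak} for the general-domain case; the only small imprecision in yours is that the homogeneous Morrey bound $\|\varphi\|_{L^\infty}\lesssim\|D\varphi\|_{L^{q'}}$ on $W_0^{1,q'}(\Omega)$ carries a factor $(\diam\Omega)^{1-n/q'}$, so the intermediate constant should read $C(n,q,\diam\Omega)$ rather than $C(n,q)$ — harmless, since the final constant is allowed to depend on $\diam\Omega$.
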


\subsection{Weak estimates for constant-rank operators} If we assume that $\Acal$ is a constant-rank operator (but not necessarily of maximal rank), we are currently only able to prove the validity of \emph{weaker} distance estimates, which require one to remove a subspace of the space of all $\Acal$-harmonic maps.  To make this precise, let us introduce the generalized $\Acal$-Laplacian operator (see Sect.~\ref{sec:2})
 	\[
 	\Delta_\Acal  \coloneqq \Acal ^* \circ \Acal .
 	\]
The statement is the following:
\begin{theorem}\label{thm:Z} Let $0 \le r \le k$ be an integer and let  $\Acal$ be an operator of order $k$ on $\Rbf^n$, from $V$ to $W$. Further assume that $\Acal$ satisfies the constant-rank property
 	\[
 \forall \xi \in \Rbf^n - \{0\},  \qquad\rank A(\xi) = const.
 	\]
	 Then, 
 \[
 	\inf_{v \in N_p(\Delta_{A},\Omega)} \|u - v\|_{W^{k-r,p}(\Omega)} \, \le C(p,r,\diam \Omega,A) \,\|\Acal u\|_{\dot W^{-r,p}(\Omega)}
 \]
 for all $u \in L^p(\Omega;V)$.
\end{theorem}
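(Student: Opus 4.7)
The idea is to exploit the constant-rank assumption to build the Moore--Penrose symbol-level pseudoinverse $A(\xi)^+$, use it to define an ``almost inverse'' Fourier multiplier $\Tcal$ on $\R^n$, and then transfer the resulting homogeneous Sobolev bounds to $\Omega$ through a Hahn--Banach lift followed by a polynomial Poincar\'e adjustment. Under the constant-rank hypothesis, the map $\xi\mapsto A(\xi)^+\in\Hom(W,V)$ is smooth and $(-k)$-homogeneous on $\R^n-\{0\}$ and satisfies $A(\xi)A(\xi)^+=\Pi_{\im A(\xi)}$ together with $A(\xi)^+A(\xi)=\id_V-\Pi_{\ker A(\xi)}$. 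Because $\ker A(\xi)^*=(\im A(\xi))^\perp$, one then reads off the crucial algebraic identity
\[
A(\xi)^*A(\xi)\,A(\xi)^+ \,=\, A(\xi)^*\Pi_{\im A(\xi)} \,=\, A(\xi)^*, \qquad \xi\in\R^n-\{0\}.
\]
Letting $\Tcal$ denote the Fourier multiplier with symbol $A^+$, the fact that $(i\xi)^\gamma A(\xi)^+$ is $0$-homogeneous and smooth off the origin whenever $|\gamma|=k$ combines with Mikhlin--H\"ormander to yield, for every $0\le r\le k$, the homogeneous estimate
\[
\|\Tcal F\|_{\dot W^{k-r,p}(\R^n)} \,\le\, C(p,A)\,\|F\|_{\dot W^{-r,p}(\R^n)}\,.
\]

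Now let $u\in L^p(\Omega;V)$. Since zero extension isometrically embeds $W^{r,q}_0(\Omega)$ into $W^{r,q}(\R^n)$, Hahn--Banach produces $F\in\dot W^{-r,p}(\R^n;W)$ satisfying $\langle F,\varphi\rangle=\langle\Acal u,\varphi\rangle$ for all $\varphi\in C_c^\infty(\Omega)$ and $\|F\|_{\dot W^{-r,p}(\R^n)}\le C\|\Acal u\|_{\dot W^{-r,p}(\Omega)}$. Set $v_0:=\Tcal F$. The symbol identity $A^*AA^+=A^*$ translates to $\Delta_\Acal v_0 = \Acal^*F$ in $\mathscr D'(\R^n)$, and since $\varphi\in C_c^\infty(\Omega)$ implies $\Acal\varphi\in C_c^\infty(\Omega;W)$, one has $\Delta_\Acal v_0|_\Omega = \Acal^*\Acal u = \Delta_\Acal u$ in $\mathscr D'(\Omega)$. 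Thus every representative of $u-v_0|_\Omega$ is $\Delta_\Acal$-harmonic on $\Omega$. To obtain a bona fide $W^{k-r,p}(\Omega)$ candidate, fix a ball $B\supset\Omega$ with $\diam B\lesssim\diam\Omega$ and choose $P\in P_{k-r-1}(V)$ so that $v:=v_0+P$ has vanishing polynomial moments of order $<k-r$ on $B$. The classical polynomial Poincar\'e inequality on $B$ then gives
\[
\|v\|_{W^{k-r,p}(\Omega)}\le\|v\|_{W^{k-r,p}(B)}\le C(p,k-r,\diam\Omega)\,\|D^{k-r}v_0\|_{L^p(B)}\le C\,\|\Acal u\|_{\dot W^{-r,p}(\Omega)},
\]
and, because $\deg P<2k$ forces $\Delta_\Acal P=0$, the element $u-v=(u-v_0|_\Omega)-P$ also lies in $N_p(\Delta_\Acal,\Omega)$. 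Taking the infimum over this candidate yields the announced bound.

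The main technical obstacle is the rigorous handling of $\Tcal$: its symbol $A^+$ is singular at the origin, so $\Tcal$ is \emph{not} an $L^p$-bounded operator on its own. It must always be grouped with a total of $k$ derivatives---distributed between input and output---before Mikhlin--H\"ormander is invoked, which is exactly why the entire argument is naturally phrased in terms of homogeneous Sobolev spaces and representatives modulo polynomials. The constant-rank hypothesis enters the proof \emph{only} through the smoothness of $A^+(\xi)$ on $\R^n-\{0\}$; the Hahn--Banach lift and the polynomial Poincar\'e adjustment are completely domain-agnostic and therefore work on arbitrary bounded open $\Omega\subset\R^n$, in contrast with the maximal-rank argument of Theorem~\ref{thm:2}, which relied on the stronger property that $A(\xi)^+$ gives rise to a true fundamental solution.
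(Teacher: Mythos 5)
Your proof is correct, but it takes a genuinely different route from the paper's. The paper proves Theorem~\ref{thm:Z} by invoking the annihilator $\Qcal$ of $\Acal$ (via the Raita potential theory), forming the generalized Laplace--Beltrami operator $\triangle_W=[\Acal\Acal^*]^{k_\Qcal}+[\Qcal^*\Qcal]^k$ on $W$, showing $\triangle_W$ is elliptic (hence maximal rank), and then feeding it into the already-established machinery of Lemma~\ref{lem:existence}: defining $\Pi u= u-\Acal^*(\Acal\Acal^*)^{k_\Qcal-1}\tilde S_W\Acal u$ and using the complex identity $\Acal^*\Qcal^*=0$ to conclude $\Delta_\Acal\Pi u=0$. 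Your argument bypasses the annihilator and $\triangle_W$ entirely: you work directly with the Moore--Penrose symbol $A(\xi)^+$ (whose smoothness off the origin is precisely what the constant-rank hypothesis delivers), use the purely linear-algebraic identity $A^*AA^+=A^*$ in place of the complex identity $\Acal^*\Qcal^*=0$, and replace the paper's domain-level solution operator by a Hahn--Banach lift of $\Acal u$ to $\R^n$ followed by a Mikhlin multiplier estimate and a polynomial Poincar\'e adjustment. The two approaches are in fact dual: the symbol of the paper's composite operator $\Acal^*(\Acal\Acal^*)^{k_\Qcal-1}\circ\triangle_W^{-1}$ coincides with $A^+$ on $\im A(\xi)$, so the underlying algebra is the same, but you reach it without ever mentioning the potential $\Qcal$. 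What each buys: your route is shorter, more elementary, conceptually closer to the original Fonseca--M\"uller full-space proof, and it makes transparent that only the smoothness of $A^+$ is used from constant rank; the cost is that you must carefully manage a Fourier multiplier with a $(-k)$-homogeneous singularity at the origin and the attendant ``modulo polynomials'' ambiguities of homogeneous spaces on $\R^n$---issues you acknowledge but do not fully flesh out, and which the paper sidesteps by working directly on the bounded domain $\Omega$ through the $\diverg^r$-representation of $\dot W^{-r,p}(\Omega)$ rather than lifting to $\R^n$. The paper's route also keeps the argument modular, recycling its maximal-rank Lemma~\ref{lem:existence} as a black box, which is valuable within the paper's overall architecture.
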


\section{Preliminaries}\label{sec:2}
As already mentioned above, $\Omega$ will always denote an arbitrary bounded and open subset of $\Rbf^n$.  If $u \in L^p(\Omega;V)$ and $\sigma \in \mathscr D'(\Omega;W)$, the system  
\[
\Acal u = \sigma \quad \text{on $\Omega$}
\]
shall always be understood in the sense of distributions, that is,
\begin{align*}
\sigma[\phi] = \int_\Omega u \cdot \Acal^* \phi \qquad \text{for all $\phi \in C^\infty_c(\Omega;W)$,}
\end{align*}
where 
\[
\Acal^*  \coloneqq (-1)^k\sum_{|\alpha| = k}   (A_\alpha)^* D^\alpha 
\] 
is the formal $L^2$-adjoint of $\Acal$. In this case, we write  $\Acal u \in L^p(\Omega;W)$ provided that $\sigma$ is representable by a $p$-integrable map, which we shall also identify with $\Acal u$. With this convention, we may rigorously define the  distributional null-space $N(\Acal;\Omega) = \set{u \in \mathscr D'(\Omega;V)}{\Acal u = 0}$ and its subspace 
\[
N_p(\Acal;\Omega) \coloneqq N(\Acal;\Omega) \cap L^p(\Omega;V) \le L^p(\Omega;V),
\]
consisting of all $p$-integrable $\Acal$-free maps on $\Omega$. 
One can see (e.g., ~\cite[Sect. 2.5]{advances}), by a simple application of the Fourier transform and a density argument, that the subspace of all $\Acal$-gradients in Fourier space
\[
	 \spn \set{\im A(\xi)[v]}{\xi \in \Rbf^n, v \in V},
\]
coincides with the point-wise essential range of $\Acal$ given by
\[
 W_\Acal = \mathrm{clos}_{W}\set{\Acal u(x)}{x \in \Rbf^n, u \in C_c^\infty(\Rbf^n;V)}.
\]

\subsection{FDN vs. maximal-rank} 

Next, we record the following equivalences for FDN operators (some of which are already well-established results): 
\begin{proposition}\label{prop:CE} 
	Let $\Omega \subset \Rbf^n$ be a Lipschitz (or Jones) domain and let $\Acal$ be a  homogeneous $k$\textsuperscript{th} order linear operator on $\Rbf^n$, from $V$ to $W$. The following are equivalent: 
	\begin{enumerate}[label=\arabic*., ref=(\arabic*), left=10pt]
\item $\dim N(\Acal,\R^n) < \infty$, \label{1}
\item $A(\xi)_\C : \Cbb \otimes V \to \Cbb \otimes W$  is one-to-one for all non-zero $\xi \in \Cbb^n$, \label{2}
\item $N(\Acal,\R^n)$ is a space of $V$-valued polynomials on $n$-variables, \label{3}
\item the restriction of $N(\Acal,\R^n)$ to $\Omega$ coincides with $N_p(A,\Omega)$, \label{4}
\item \label{5}
\[
	\|u\|_{W^{k,p}(\Omega)} \le C(p,\Omega,A) \left(\|u\|_{L^p(\Omega)} + \|Au\|_{L^p(\Omega)} \right) \quad \text{for all $u \in D(\Acal)$}\,,
\]
\item $N(\Acal,\R^n)|_\Omega \subset W^{k,p}(\Omega;V)$ and \label{6}
	\begin{equation*} 
	\min_{v \in N(\Acal)}  \|u - v\|_{W^{k,p}(\Omega)} \le C(p,\Omega,A) \|A u\|_{L^{p}(\Omega)} \qquad \text{for all $u \in D(\Acal)$}\,,
	\end{equation*}
		\item \label{7} $N(\Acal,\R^n)|_\Omega \subset W^{k,p}(\Omega)$ and there exists at least one  continuous linear projection $$P : W^{k,p}(\Omega;V) \longrightarrow N(\Acal,\R^n)|_\Omega\,.$$ Moreover, for {any} such projection it holds
\[
	\|u - Pu\|_{W^{k,p}(\Omega)} \le C  \|Au\|_{L^p(\Omega)} \qquad \text{for all $u \in D(\Acal)$}.
\]
In this case, the constant $C$ also depends on $\|P\|_{W^{k,p} \to W^{k,p}}$.
\end{enumerate}
\end{proposition}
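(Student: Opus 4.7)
The plan is to organize the seven conditions around the algebraic C-ellipticity hub \ref{2} together with Theorem~B. The equivalences \ref{1}~$\Longleftrightarrow$~\ref{2} and \ref{1}~$\Longleftrightarrow$~\ref{5} are already available from the introduction (Smith's classical theorem and Theorem~B, respectively), so the task reduces to inserting \ref{3}, \ref{4}, \ref{6}, \ref{7} into this equivalence class.

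For \ref{2}~$\Longrightarrow$~\ref{3}, I would proceed via Fourier analysis: elliptic regularity promotes any $u \in N(\Acal,\R^n)$ to a real-analytic function of polynomial growth, hence a tempered distribution; the Fourier identity $A(\xi)\hat u(\xi) = 0$ together with the injectivity of $A(\xi)_\C$ away from the origin forces $\supp \hat u \subset \{0\}$, so $u$ is a $V$-valued polynomial. The converse \ref{3}~$\Longrightarrow$~\ref{1} follows from a degree-bound argument: since $\Acal$ is homogeneous of order $k$, it preserves the homogeneous decomposition of polynomials, so the top-order part of any polynomial solution is itself a homogeneous polynomial solution, and C-ellipticity forces the overall degree to be uniformly bounded by a constant depending only on $A$, yielding a finite-dimensional space.

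For \ref{1}~$\Longrightarrow$~\ref{4}, I would combine Aronszajn--Smith real-analyticity for $u \in N_p(\Acal,\Omega)$ with \ref{3}: the local power series expansion of $u$ around any $x_0 \in \Omega$ must coincide with a polynomial $p \in N(\Acal,\R^n)$, by matching the germ of $u$ at $x_0$ against the finite-dimensional space $N(\Acal,\R^n)$ translated to $x_0$; the identity theorem then extends this equality to the connected component of $x_0$, and on a (possibly disconnected) Jones domain a componentwise dimension count identifies $u$ globally with a single polynomial in $N(\Acal,\R^n)$. The reverse inclusion $N(\Acal,\R^n)|_\Omega \subset N_p(\Acal,\Omega)$ is trivial since polynomials restricted to a bounded set are $L^p$-integrable. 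Conversely, \ref{4}~$\Longrightarrow$~\ref{1} is immediate: by \ref{5} (derived from \ref{1} via Theorem~B) and the compact embedding $W^{k,p}(\Omega) \embed L^p(\Omega)$, the space $N_p(\Acal,\Omega)$ is finite-dimensional, so \ref{4} pulls this back to $N(\Acal,\R^n)$.

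The remaining equivalences \ref{5}~$\Longleftrightarrow$~\ref{6}~$\Longleftrightarrow$~\ref{7} follow from the finite-dimensional structure on $\Omega$ supplied by \ref{4}. For \ref{5}~$\Longrightarrow$~\ref{6}, pick any continuous projection $P$ onto the finite-dimensional subspace $N(\Acal,\R^n)|_\Omega \subset W^{k,p}(\Omega;V)$ and apply \ref{5} to $u - Pu$. For \ref{6}~$\Longrightarrow$~\ref{7}, any linear projection onto a finite-dimensional closed subspace of $W^{k,p}$ is automatically continuous, and the stated projection estimate follows from \ref{6} together with equivalence of norms on the finite-dimensional range. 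For \ref{7}~$\Longrightarrow$~\ref{5}, a triangle inequality combined with continuity of $P$ closes the loop. The main obstacle I anticipate lies in \ref{1}~$\Longrightarrow$~\ref{4}: identifying the local Taylor germ at each $x_0$ with the global polynomial space $N(\Acal,\R^n)$ requires a clean matching between the germ space at $x_0$ and the globally defined finite-dimensional space, and on disconnected Jones domains the componentwise polynomials must be assembled consistently into a single global choice (so that \ref{4} holds as stated rather than merely componentwise).
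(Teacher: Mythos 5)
Your overall architecture (hub the chain around conditions already established in the literature, then prove the remaining implications) mirrors the paper's, and several of your sketches for $(2)\Rightarrow(3)$, $(1)\Rightarrow(4)$, etc., are attempts to reprove facts the paper simply cites (to Smith for $(1)\Leftrightarrow(2)\Leftrightarrow(3)\Leftrightarrow(5)$, to Breit--Diening--Gmeineder for $(1)\Leftrightarrow(4)\Leftrightarrow(5)$). Two of these sketches have issues you should be aware of even if you ultimately cite them: for $(2)\Rightarrow(3)$, an arbitrary element of $N(\Acal,\R^n)\subset\mathscr D'(\R^n;V)$ need not be tempered --- elliptic regularity gives real-analyticity but not polynomial growth, so you cannot simply take the Fourier transform; and your $(4)\Rightarrow(1)$ explicitly invokes ``$(5)$ (derived from $(1)$ via Theorem~B)'' to prove $(1)$, which is circular.

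The genuine gap, however, is in $(5)\Rightarrow(6)$, which is the part the paper actually proves rather than cites. Applying $(5)$ to $u-Pu$ yields
\[
\|u-Pu\|_{W^{k,p}(\Omega)}\le C\bigl(\|u-Pu\|_{L^p(\Omega)}+\|\Acal u\|_{L^p(\Omega)}\bigr),
\]
and you are left with the term $\|u-Pu\|_{L^p(\Omega)}$, which must itself be bounded by $\|\Acal u\|_{L^p(\Omega)}$. Nothing in your argument controls it. The paper's key step is a closed-range argument: $(5)$ has the form $\|u\|_E\le C(\|Tu\|_F+\|\iota u\|_G)$ with $\iota\colon W^{k,p}\embed L^p$ compact by Rellich (this is where the Jones/Lipschitz hypothesis enters); Peetre's lemma (cited as Brezis~6.9) then shows that $R(\Acal)$ is closed in $L^p(\Omega)$, so the induced map on $D(\Acal)/N(\Acal)|_\Omega$ is an isomorphism onto its range by the open mapping theorem, giving precisely the $L^p$-quotient bound $\min_v\|u-v\|_{L^p}\le C\|\Acal u\|_{L^p}$ that your sketch is missing; feeding this back into $(5)$ gives $(6)$. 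Without this functional-analytic ingredient the implication does not close. Also, in $(6)\Rightarrow(7)$, the claim that \emph{any} linear projection onto a finite-dimensional subspace of $W^{k,p}$ is automatically continuous is false; what is true (via Hahn--Banach, used in the paper) is that a finite-dimensional subspace always admits \emph{some} continuous projection, and finite-dimensionality of $N(\Acal,\R^n)|_\Omega$ is itself what needs justifying here (the paper derives it from the norm equivalence in $(6)$ plus Rellich compactness). The projection estimate then follows from the paper's triangle-inequality manipulation $\|u-Pu\|\le\|u-\tilde w_0\|+\|P(\tilde w_0-u)\|$ with $\tilde w_0$ the $(6)$-minimizer, rather than from a vague ``equivalence of norms on the range.''
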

\emph{Proof.} \begin{proof}The equivalence $\ref{1} \Leftrightarrow \ref{2} \Leftrightarrow \ref{3}$ and $\ref{3} \Leftrightarrow \ref{5}$ (for Lipschitz domains) are contained in~\cite{smith2}; $\ref{1} \Leftrightarrow (4) \Leftrightarrow (5)$ for Jones domains is contained in~\cite{Diening}, as well as a part of $(1) \Rightarrow (7)$ for a particular projection. Clearly $\ref{7}\Rightarrow \ref{5}$ is trivial. We are only left to show that $\ref{5} \Rightarrow \ref{6} \Rightarrow \ref{7}$. Notice that $\ref{5}$ has the form
\[
	\|u\|_E \le C \left(\|Tu\|_F + \|Pu\|_G\right),
\]	
where $E = W^{k,p}(\Omega),F =L^p(\Omega)$ are Banach, $\Acal  = T \in \Lcal(E,F)$, $G = L^p(\Omega)$ is Banach and $P = \id \in \Kcal(E,G)$; here we have used that $\Omega$ is a Jones domain so that Rellich's theorem holds. By 6.9 in~\cite{Brezis}, it follows that $R(T) = R(A)$ is closed in $L^p(\Omega)$. 
In particular, $A : D(\Acal)/N(\Acal)|_\Omega \to R(A)$ is an isomorphism, and by the open mapping theorem, we get the desired bound
\[
	\min_{v\in N(\Acal)} \|u - v\|_{L^p(\Omega)} \coloneqq \|u\|_{L^p(\Omega)/N(\Acal)} \le C \|Au\|_{L^p(\Omega)}.
\]	
This proves (6). Lastly, we show that $\ref{6} \Rightarrow \ref{7}$: Since the Sobolev norm and $L^p$ norms are equivalent on $N(\Acal,\R^n)|_\Omega$, it follows from Rellich's theorem that $N(\Acal,\R^n)|_\Omega$ compact and therefore finite-dimensional. Therefore, there exists at least one continuous linear projection $Q: W^{k,p}(\Omega;V) \to N(\Acal)|_\Omega$. Now, let $P$ be any such projection. It follows that
\[
	\|u - Pu\|_{W^{k,p}(\Omega)} \le \|u - \tilde w_0\|_{W^{k,p}(\Omega)} + \|\tilde w_0 - Pu\|_{W^{k,p}(\Omega)},
\]	
where $w_0 \in W^{k,p}(\Omega) \cap N(\Acal,\R^n)$ is the minimizer of the infimum in (6) and $\tilde w_0 = \chi_\Omega w_0$. Since $P\tilde w_0 = \tilde w_0$, we get directly from (6) and the continuity of $P$ that
\begin{align*}
	\|u - Pu\|_{W^{k,p}(\Omega)} & \le C\|\Acal u\|_{L^p(\Omega)} + \|P\|_{W^{k,p} \to W^{k,p}} \|\tilde w_0 - u\|_{W^{k,p}(\Omega)} \\
	& \le C' \|\Acal u\|_{L^p(\Omega)}.
\end{align*}
This proves \ref{7}. 
\end{proof}
\begin{remark}[Maximal-rank vs FDN]\label{rem:fdn}In general, coercive estimates on domains fail for all non-trivial operators of maximal rank. In fact, 
\[
	\Acal \; \text{is a maximal-rank operator} \quad \Longrightarrow \quad \dim N(\Acal,\Omega) = \infty.
	\]
This is in line with the fact that the validity of strong projection estimates is independent of the validity of coercive inequalities on $\Omega$ (cf.~\ref{5} in Proposition~\ref{prop:CE}) or the existence of suitable linear trace operators on $\partial \Omega$ (cf.~\cite{Diening,GR}). The implication above follows easily from~\cite[Prop. 1.2]{GR}, where it has been shown that $\dim N(\Acal) < \infty$ implies  that $\Acal$ is \emph{canceling} (in the sense of Van Schaftingen~\cite{vans2013jems}):
	\[
	\mathscr W_\Acal \coloneqq \bigcap_{\xi \in \R^n - \{0\}} \im A(\xi) =\{0_W\}.
	\]
However, the cancellation property fails for all non-trivial maximal-rank operators because, by definition, these satisfy $\mathscr W_\Acal = W_\Acal \neq \{0_W\}$.
\end{remark}

\subsection{Properties of negative Sobolev spaces}  Let us introduce the higher-order divergence operator: for a tensor $F = (F_\beta)_{|\beta| = k} \in L^p(\Omega;V)^{\binom{N + k -1}{k}}$, the $k$\textsuperscript{th} order divergence operator is defined as the distributional operator
\[
	\diverg^k F \coloneqq \sum_{|\beta| = k} D^\beta F_\beta.
\]
With this in mind, we give a short proof of a well-known representation of homogeneous negative spaces as quotient spaces of $L^p$-spaces.

\begin{proposition} The map $S : L^p(\Omega)^{\binom{N + k - 1}{k}}/N_p(\diverg^k,\Omega) \longrightarrow \dot W^{-k,p}(\Omega)$ given by 
\[
	S[g] = (-1)^k\diverg^k g,
\]
is a linear bijective isometry. 
\end{proposition}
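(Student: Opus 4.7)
The plan is to establish the result in three short steps: linearity and well-definedness, the easy Hölder-based upper bound, and finally surjectivity together with the matching lower bound via Hahn--Banach and Riesz representation. The argument is essentially a packaging of standard duality machinery, tailored to the natural isometric embedding $\phi \mapsto D^k\phi$.

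First, by integration by parts, for any representative $g$ of a class $[g]$ and any test function $\phi \in C_c^\infty(\Omega)$ one has
\[
S[g][\phi] \,=\, (-1)^k \langle \diverg^k g,\, \phi\rangle \,=\, \sum_{|\beta|=k} \int_\Omega g_\beta\, D^\beta \phi.
\]
This makes linearity obvious and identifies the kernel of the ambient map $g \mapsto S[g]$ on $L^p(\Omega)^{\binom{N+k-1}{k}}$ as exactly $N_p(\diverg^k,\Omega)$, so $S$ descends to a well-defined injective linear map on the quotient. Hölder's inequality yields
\[
|S[g][\phi]| \,\le\, \|g\|_{L^p(\Omega)}\,\|\phi\|_{\dot W^{k,q}(\Omega)},
\]
with $q = p/(p-1)$, and passing to the infimum over representatives gives $\|S[g]\|_{\dot W^{-k,p}(\Omega)} \le \|[g]\|_{L^p/N_p}$.

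Second, for surjectivity and the reverse bound I will introduce the linear map $J \colon (W^{k,q}_0(\Omega), \|\cdot\|_{\dot W^{k,q}}) \to L^q(\Omega)^{\binom{N+k-1}{k}}$ given by $J\phi = (D^\beta\phi)_{|\beta|=k}$. Because $\Omega$ is bounded, iterated Poincaré's inequality ensures that $\|\cdot\|_{\dot W^{k,q}}$ is a genuine norm on $W^{k,q}_0(\Omega)$ (equivalent to the usual Sobolev norm), so $J$ is an isometric linear embedding and its image $X := J(W^{k,q}_0(\Omega))$ is a closed subspace of the product $L^q$-space. Given an arbitrary $F \in \dot W^{-k,p}(\Omega)$, I will define a continuous linear functional of norm $\|F\|_{\dot W^{-k,p}(\Omega)}$ on $X$ by $J\phi \mapsto F[\phi]$, invoke the Hahn--Banach theorem to obtain a norm-preserving extension $\tilde F$ on the ambient product $L^q$-space, and apply Riesz representation to produce $g \in L^p(\Omega)^{\binom{N+k-1}{k}}$ with $\|g\|_{L^p(\Omega)} = \|\tilde F\| = \|F\|_{\dot W^{-k,p}(\Omega)}$ and $\tilde F[(h_\beta)] = \sum_{|\beta|=k}\int_\Omega g_\beta h_\beta$. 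Evaluating on $J\phi$ gives $F[\phi] = S[g][\phi]$, hence $F = S[g]$, proving surjectivity; and $\|[g]\|_{L^p/N_p} \le \|g\|_{L^p(\Omega)} = \|S[g]\|_{\dot W^{-k,p}(\Omega)}$ is the desired matching lower bound.

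I do not anticipate any serious obstacle. The only non-routine checks are that $\|\cdot\|_{\dot W^{k,q}}$ is actually a norm (not merely a seminorm) on $W^{k,q}_0(\Omega)$ -- already noted in the excerpt via Poincaré -- and that the image $X = J(W^{k,q}_0(\Omega))$ is closed, which follows automatically because $J$ is an isometry between Banach spaces. The remaining ingredients (Hahn--Banach, the duality $L^q(\Omega)^* \cong L^p(\Omega)$ for $1<p<\infty$, and integration by parts identifying $N_p(\diverg^k,\Omega)$ as the kernel of $g \mapsto S[g]$) are classical.
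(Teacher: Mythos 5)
Your proof is correct and follows the same route as the paper: both view the statement as the assertion that the map $\phi \mapsto D^k\phi$ from $W^{k,q}_0(\Omega)$ to the product $L^q$-space is an isometric embedding whose adjoint descends to a bijective isometry, established via Hahn--Banach and $L^q$--$L^p$ duality. The paper simply states this more compactly, invoking the abstract adjoint-of-an-isometry fact directly, whereas you spell out the extension, Riesz representation, and norm-matching steps.
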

\emph{Proof.} \begin{proof}Let $q$ be the dual H\"older exponent of $p$ and consider the map linear map $H : \dot W_0^{k,q}(\Omega) \longrightarrow L^q(\Omega)^{\binom{N + k - 1}{k}}$ defined by the assignment $u \mapsto D^k u$. By definition of the homogeneous norm, $H$ defines a one-to-one isometry and therefore, by the Hahn Banach theorem, $H^* : L^p(\Omega)^{\binom{N + k - 1}{k}}/\ker H^* \to \dot W^{-k,p}(\Omega)$ is well-defined and is a bijective isometry. By construction, $H^*$ is precisely the distributional operator $(-1)^k \diverg^k $ and hence  $$\ker H^* = N_p(\diverg^k,\Omega),$$ as desired.
\end{proof}

The following result will be crucial to construct a linear extension of the solution operator $\Acal^{-1}$ (see Lemma~\ref{lem:existence}), to all of $W^{-k,p}(\Omega;W)$, when $p = 2$ or when $\Omega$ is a sufficiently regular and connected domain. 

\begin{proposition}\label{lem:Sob}
Assume that either $p = 2$ or that $\Omega$ is a connected domain with smooth boundary $\partial \Omega$. Then, there exists a one-to-one linear map $R: \dot W^{-k,p}(\Omega) \longrightarrow L^p(\Omega)^{\binom{N + k - 1}{k}}$ satisfying 
\[
	\sigma = (-1)^k \diverg^k (R\sigma) \quad \text{for all $\sigma \in \dot W^{-k,p}(\Omega)$}.
\]
Moreover, this map is bounded in the sense that 
\[
	\|R\sigma\|_{L^p(\Omega)} \le C(p,k,N,\Omega) \, \|\sigma\|_{W^{-k,p}(\Omega)}.
\]
\end{proposition}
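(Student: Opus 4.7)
The aim is to construct $R$ as a canonical (linear) right-inverse of the quotient isometry $S$ from the preceding proposition, which identifies $L^p(\Omega)^{\binom{N+k-1}{k}}/N_p(\diverg^k,\Omega)$ with $\dot W^{-k,p}(\Omega)$ via $[g] \mapsto (-1)^k \diverg^k g$. In other words, for each $\sigma$ we must pick out a distinguished representative in the coset $S^{-1}[\sigma]$, in a way that depends linearly on $\sigma$ and is bounded in the $L^p$ norm. The two hypotheses ($p=2$ versus smooth connected $\Omega$) reflect two different mechanisms for producing such a canonical lift, so the proof splits into two cases.

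\textbf{Case $p=2$.} Here $S$ is a bijective isometry between Hilbert spaces, and the quotient $L^2(\Omega)^{\binom{N+k-1}{k}}/N_2(\diverg^k,\Omega)$ can be identified with the orthogonal complement $N_2(\diverg^k,\Omega)^\perp$ inside $L^2(\Omega)^{\binom{N+k-1}{k}}$. I would simply define $R\sigma$ to be the unique representative of $S^{-1}[\sigma]$ lying in $N_2(\diverg^k,\Omega)^\perp$, obtained by applying the orthogonal projection to any representative. Linearity of $R$ comes from the linearity of orthogonal projection and of $S^{-1}$, the identity $\sigma = (-1)^k\diverg^k(R\sigma)$ is immediate since $R\sigma \in S^{-1}[\sigma]$, injectivity follows from the bijectivity of $S$, and the norm bound $\|R\sigma\|_{L^2} = \|\sigma\|_{\dot W^{-k,2}}$ is in fact an equality.

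\textbf{Case: smooth connected $\Omega$.} The natural replacement for the Hilbertian argument is to use the polyharmonic Dirichlet problem as a canonical lift. The key input is the classical $L^p$-theory for higher-order elliptic boundary value problems (Agmon--Douglis--Nirenberg; see e.g.\ Gazzola--Grunau--Sweers), which on a bounded smooth domain produces a Banach-space isomorphism $(-\Delta)^k : W^{k,p}_0(\Omega) \to W^{-k,p}(\Omega)$. Given $\sigma \in \dot W^{-k,p}(\Omega)$, let $u_\sigma \in W^{k,p}_0(\Omega)$ be the unique solution of $(-\Delta)^k u_\sigma = \sigma$; the map $\sigma \mapsto u_\sigma$ is linear and satisfies $\|D^k u_\sigma\|_{L^p(\Omega)} \le C\|\sigma\|_{W^{-k,p}(\Omega)}$. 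Expanding by the multinomial formula,
\[
(-\Delta)^k u = (-1)^k \sum_{|\beta|=k} \binom{k}{\beta} D^{2\beta} u = (-1)^k \diverg^k\!\left(\binom{k}{\beta} D^\beta u\right)_{|\beta|=k},
\]
so defining $(R\sigma)_\beta \coloneqq \binom{k}{\beta} D^\beta u_\sigma$ gives a linear bounded map into $L^p(\Omega)^{\binom{N+k-1}{k}}$ satisfying $\sigma = (-1)^k \diverg^k(R\sigma)$.

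\textbf{Injectivity and obstacles.} If $R\sigma = 0$, then every order-$k$ derivative of $u_\sigma$ vanishes, so $u_\sigma$ coincides a.e.\ with a polynomial of degree at most $k-1$. Since $u_\sigma \in W^{k,p}_0(\Omega)$ and $\partial\Omega$ is smooth (and has positive $\Hcal^{n-1}$-measure), the traces of $u_\sigma$ and of its derivatives of order $\le k-1$ vanish on $\partial\Omega$; applying this to the constants obtained as highest-order derivatives and descending inductively forces the polynomial to vanish identically, whence $\sigma = (-\Delta)^k u_\sigma = 0$. The only real obstacle is invoking the $L^p$-isomorphism for the polyharmonic Dirichlet problem on smooth bounded domains; everything else is bookkeeping with the multinomial identity and quotient norms. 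The role of connectedness and smoothness of $\partial\Omega$ is precisely to secure that isomorphism and the rigidity of $W^{k,p}_0$-polynomials.
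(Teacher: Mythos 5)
Your proposal is correct and takes essentially the same two-pronged route as the paper: Hilbert-space orthogonal projection for $p=2$, and the $L^p$-theory for a $2k$\textsuperscript{th}-order elliptic Dirichlet problem for the smooth connected case. The only (cosmetic) divergence is the choice of auxiliary operator in the second case: you use the genuine polyharmonic operator $(-\Delta)^k$ together with the multinomial expansion and set $(R\sigma)_\beta = \binom{k}{\beta} D^\beta u_\sigma$, while the paper works with the operator $\diverg^k\!\circ D^k = \sum_{|\beta|=k} D^{2\beta}$ (no multinomial weights) and sets $R\sigma = (-1)^k D^k u_\sigma$; both are constant-coefficient elliptic operators of order $2k$ on which the ADN theory yields the same $W^{k,p}_0$-to-$W^{-k,p}$ isomorphism, so the arguments are interchangeable. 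One small remark: your ``Injectivity and obstacles'' paragraph is more elaborate than necessary — once $\sigma = (-1)^k\diverg^k(R\sigma)$ is established, $R\sigma = 0$ forces $\sigma = 0$ immediately, with no need to invoke trace rigidity of $W^{k,p}_0$-polynomials.
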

\emph{Proof.} \begin{proof}First, we observe that when $p = 2$, the assertion is a straightforward consequence of Hilbert spaces' theory. Indeed, since every closed subspace of a Hilbert space has an orthogonal complement, the assertion follows directly from the previous proposition. We now address the issue when $\Omega$ is a connected domain with a regular boundary. In this scenario, the classical existence and regularity theory for (powers of) the Laplacian implies the following: for every $g \in L^p(\Omega)^{\binom{N + k - 1}{k}}$, there exists a unique solution $u_g \in W_0^{k,p}(\Omega)$ of the equation
\[
	\Delta^k u \coloneqq \diverg^k(D^k u) = (-1)^k\diverg^k(g) \quad \text{on $\Omega$}.
\]
Moreover, the solution $u_g$ satisfies the $L^p$-estimate
\[
	\|D^k u_g\|_{L^p(\Omega)} \le C(p,k,N,\Omega) \, \|g\|_{L^p(\Omega)},
\]
where the constant depends on the diameter of $\Omega$ and on $\partial \Omega$ (this requires its boundary $\partial \Omega$ to be sufficiently regular). By the previous proposition, we deduce that, for every $\sigma \in \dot W^{-k,p}(\Omega)$, the equation
\[
	\Delta^k u = \sigma \quad \text{on $\Omega$}
\]
has a unique solution $u_\sigma \in W_0^{k,p}(\Omega)$ satisfying
\[
	\|D^k u_\sigma\|_{L^p(\Omega)} \le C(p,k,N,\Omega) \, \|\sigma\|_{\dot W^{-k,p}(\Omega)}.
\]
Since this solution unique for every such $\sigma$, it follows that the map $R : \sigma \mapsto (-1)^k D^k u_\sigma$ is well defined, and defines a one-to-one bounded linear map from $\dot W^{-k,p}(\Omega)$ to $L^p(\Omega)^{\binom{N + k - 1}{k}}$. Moreover, by construction
\[
	(-1)^k\diverg^k R\sigma = \Delta^k u_\sigma = \sigma \quad \text{and} \quad \|R\sigma\|_{L^p(\Omega)} \le C(p,k,N,\Omega) \, \|\sigma\|_{\dot W^{-k,p}(\Omega)}.
\]
This finishes the proof.
\end{proof}

\begin{notation*} Let $r > s$ be arbitrary non-negative integers. We shall henceforth, under the  embedding $W^{-s,p}(\Omega) \embed W^{-r,p}(\Omega)$, consider $W^{-s,p}(\Omega)$ as a subset of $W^{-r,p}(\Omega)$.\end{notation*}

\subsection{The inverse of maximal-operators} The basis of our results consists in exploiting the existence of \emph{true} fundamental solutions for maximal-rank operators. This is addressed in the following result:

\begin{lemma}[Existence and regularity]\label{lem:existence} Assume that $\Acal$ maximal-rank operator of order $k$ on $\R^n$, from $V$ to $W$. 	Then, there exists a linear and bounded solution operator 
	\[
	\Acal^{-1} :L^p(\Omega;W) \longrightarrow W^{k,p}(\Omega;V),
	\] satisfying classical Sobolev regularity estimates. That is, for every  $f \in L^p(\Omega;W)$, it holds
	\[
		\Acal (\Acal^{-1}[f]) = f \quad \text{on $\Omega$}
	\]
	 and 
	\[
		\|\Acal^{-1}[f]\|_{W^{k,p}(\Omega)} \le C(p,\diam\Omega,A)\, \|f\|_{L^p(\Omega)}.
	\]
	
	Moreover, there exists a (possibly non-linear) operator 
	\[
	\tilde S : \dot W^{-k,p}(\Omega;W) \longrightarrow L^p(\Omega;V),
	\] 
	which extends $\Acal^{-1}$ and is such that 
\[
		\Acal (\tilde S \sigma) = \sigma \quad \text{on $\Omega$}\,,
	\]
	 for every $\sigma \in W^{-k,p}(\Omega;W)$. Furthermore, for every
 integer $0 \le r \le k$ and every $\sigma \in \dot W^{-r,p}(\Omega;W)$, it holds 
	\[
		\|\tilde S\sigma\|_{\dot W^{k-r,p}(\Omega)} \le C(p,r,\diam\Omega,A)\, \|\sigma\|_{\dot W^{-r,p}(\Omega)}\,.
	\]
	
	If additionally, $p = 2$ or $\Omega$ is a connected domain with smooth boundary $\partial \Omega$, then the solution operator $\tilde S$ above can be constructed as a linear extension of $\Acal^{-1}$ (which we shall still denote by $\Acal^{-1}$).
	%
%
	\end{lemma}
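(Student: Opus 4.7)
The plan is to build a \emph{genuine} fundamental solution $K \in L^1_{\loc}(\R^n;\Hom(W,V))$ satisfying $\Acal K = \delta_0 \otimes \id_W$, and then use it as a convolution right-inverse. Identifying $W$ with the essential range $W_\Acal$, the maximal-rank condition guarantees that $A(\xi) : V \to W$ is surjective for every $\xi \in \R^n \setminus \{0\}$, so the Moore--Penrose pseudoinverse
\[
B(\xi) \coloneqq A(\xi)^*\bigl(A(\xi) A(\xi)^*\bigr)^{-1}
\]
is a smooth, $(-k)$-homogeneous right inverse of $A(\xi)$ on $\R^n\setminus\{0\}$. I would define $K \coloneqq \mathcal{F}^{-1}[B]$ in the sense of tempered distributions (regularizing the origin in the standard way). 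By the calculus of homogeneous distributions, $K$ is homogeneous of degree $k-n$ away from $0$, with at most logarithmic corrections when $k \ge n$; in either case $K \in L^1_{\loc}$, and by construction $\Acal K = \delta_0 \otimes \id_W$ in $\mathscr D'(\R^n)$.

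For the main estimate, given $f \in L^p(\Omega;W)$ I would extend it by zero to $\tilde f \in L^p(\R^n;W)$ and set $\Acal^{-1}[f] \coloneqq (K \ast \tilde f)|_\Omega$; the identity $\Acal(\Acal^{-1}[f]) = f$ on $\Omega$ is immediate from $\Acal K = \delta_0 \otimes \id_W$. For the Sobolev bound, I would split $K = K_1 + K_2$ with $K_1 \coloneqq K \cdot \chi_{B(0,2\diam\Omega)}$, so that only $K_1$ contributes to the convolution on $\Omega$. For $|\alpha|<k$, the function $D^\alpha K_1$ is globally integrable (it is controlled near the origin by $|x|^{k-|\alpha|-n}$ with $k-|\alpha|>0$, and vanishes outside a bounded ball), and Young's inequality yields the $L^p$--$L^p$ bound for those derivatives. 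For $|\alpha|=k$, the Fourier multiplier $(i\xi)^\alpha B(\xi)$ is smooth and $0$-homogeneous, so Mikhlin--H\"ormander (equivalently, Calder\'on--Zygmund theory for the $(-n)$-homogeneous kernel $D^\alpha K$) gives the top-order $L^p$ bound. The resulting constant depends only on $p$, $\diam\Omega$, and $A$.

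To extend $\Acal^{-1}$ to negative-Sobolev data, I would use the representation $\dot W^{-r,p}(\Omega;W) \simeq L^p(\Omega;W)^{\binom{N+r-1}{r}}/N_p(\diverg^r,\Omega)$, the natural analogue of the earlier proposition with $k$ replaced by $r$. Given $\sigma \in \dot W^{-r,p}(\Omega;W)$, pick any tensor $g = (g_\beta)_{|\beta|=r}$ with $(-1)^r\diverg^r g = \sigma$ and $\|g\|_{L^p(\Omega)} \le 2\|\sigma\|_{\dot W^{-r,p}(\Omega)}$, set $u_\beta \coloneqq \Acal^{-1}[g_\beta]$, and define
\[
\tilde S \sigma \coloneqq (-1)^r \sum_{|\beta|=r} D^\beta u_\beta.
\]
Then $\Acal(\tilde S \sigma) = (-1)^r\diverg^r g = \sigma$ on $\Omega$, while $\|D^{k-r}\tilde S \sigma\|_{L^p}$ reduces to $\sum_\beta \|D^k u_\beta\|_{L^p}$, which by the first part is controlled by $\|g\|_{L^p}$, hence by $\|\sigma\|_{\dot W^{-r,p}}$. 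Taking $r=0$ recovers $\Acal^{-1}$, so $\tilde S$ genuinely extends it; the only source of nonlinearity is the selection $\sigma \mapsto g$. When $p=2$ or $\partial\Omega$ is smooth, Proposition~\ref{lem:Sob} (applied at order $r$) furnishes a bounded \emph{linear} right-inverse $R$ of $(-1)^r\diverg^r$, and choosing $g \coloneqq R\sigma$ renders $\tilde S$ linear.

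The main obstacle is precisely the construction of $K$: it is the maximal-rank hypothesis that makes $A(\xi)$ pointwise surjective and hence admits a smooth homogeneous right inverse $B$, so that $K$ is a \emph{true} fundamental solution and not merely a parametrix. For general constant-rank operators one has only the weaker identity $\Acal(K \ast \Acal u) = u$ on test functions, insufficient to prescribe arbitrary right-hand sides; every subsequent $L^p$-estimate in the paper rests on the strengthened right-inverse identity $\Acal K = \delta_0 \otimes \id_W$ available here.
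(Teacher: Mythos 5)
Your proof follows essentially the same route as the paper: you construct the true fundamental solution $K$ as the inverse Fourier transform of the Moore--Penrose pseudoinverse $A(\xi)^\dagger = A(\xi)^*(A(\xi)A(\xi)^*)^{-1}$ (using H\"ormander's extension of homogeneous distributions across the origin, which you call ``regularizing in the standard way''), define $\Acal^{-1}[f]=(K\ast\tilde f)|_\Omega$, verify the identity $\Acal(\Acal^{-1}[f])=f$ on the Fourier side, obtain the top-order derivative estimate from Mikhlin's multiplier theorem, and extend to negative Sobolev data via the $\diverg^r$ representation with a (nonlinear, or by Proposition~\ref{lem:Sob} linear) selection. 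The one noticeable deviation is how you handle the intermediate derivatives $D^\alpha(\Acal^{-1}[f])$ for $|\alpha|<k$: the paper bounds $\|v\|_{L^p(B_R)}$ and $\|D^k v\|_{L^p}$ separately and then interpolates via Poincar\'e-type inequalities on $W^{k,p}(B_R)$, whereas you truncate $D^\alpha K$ to a ball of radius comparable to $\diam\Omega$, observe that its local singularity $|x|^{k-|\alpha|-n}$ (with a possible logarithmic factor when $k\ge n$) is integrable for $|\alpha|<k$, and apply Young's inequality directly. Your truncation argument is perhaps slightly more transparent, though you should apply the cutoff to $D^\alpha K$ rather than to $K$ itself (a sharp cutoff of $K$ would generate surface distributions upon differentiation), and you should fix a definite selection of $g$ from $\sigma$ (e.g.\ via an $\mathrm{arg\,min}$ as in the paper) so that $\tilde S$ is a well-defined map; with those small repairs, the argument is sound.
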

	
\emph{Proof.} \begin{proof}First, we address the existence and properties of $\Acal^{-1}$ on $L^p(\Omega;W)$. The strategy of the proof is to exploit the regularity properties of the Moore-Penrose pseudo-inverse of the principal symbol of $\Acal$, a technique that dates back to the work of Murat~\cite{murat1981compacite}, Fonseca and M\"uller~\cite{fonseca1999quasi}, Gustafson~\cite{gustafson2011poincare}, and more recently also key for the work of Raita~\cite{raita2019potentials} and Arroyo-Rabasa~\cite{adolfo}.  Let us write $k = n + \ell > 0$, where $\ell$ is an integer, so that $\ell > -n$. We recall from~\cite{simental} that  
	\[
		P(\xi) \coloneqq A(\xi)^\dagger\qquad (\xi \in \Rbf^n - \{0\})
	\]
	defines a $-(n+\ell)$-homogeneous distribution in $C^\infty(\Rbf^n - \{0\};\Hom(W;V))$. Here, for a matrix $M$, we have denoted by $M^\dagger$ its Moore--Penrose pseudo-inverse, which satisfies the fundamental algebraic identity $M M^\dagger = \mathrm{proj}_{\im M}$.
	By assumption, $A(\xi)$ has maximal rank for every $\xi \in \Rbf^n-\{0\}$ and by the discussion in the introduction we may hence assume without loss of generality that $\im A(\xi) = W = W_\Acal$ for all non-zero frequencies. In particular, it follows from the universal property of the quasi-inverse that $A(\xi)P(\xi) = \mathrm{proj}_{\im A(\xi)} = \id_W \in \Hom(W,W)$ for all non-zero $\xi \in \R^n$. Moreover, by~\cite[Theorems~3.2.3,~3.2.4]{Hormander}, we find that $P$ can be extended to a tempered distribution 
	\[
	P^{\frarg} \in \mathscr S'(\R^d;\Hom(W,V))
	\]
satisfying $pP^{\frarg} = (p P)^{\frarg}$. Here, $(\_)^{\frarg}$ denotes the extension for all homogeneous polynomials of degree $s > \ell$. ( \footnote{Here, since $s > \ell$, $(p P)^{\frarg} \in \mathscr D'(\Rbf^n;V\otimes W^*)$ denotes the unique extension of the homogeneous distribution $pP\in \mathscr D'(\Rbf^n-\{0\};V \otimes W^*)$.} ) It follows also that its inverse Fourier transform $K \coloneqq \mathrm (2\pi i)^k\Fcal^{-1} ({P^{\frarg}})$ is a smooth homogeneous map of degree $\ell$ on $\R^n - \{0\}$, locally integrable on $\Rbf^n$ (since $\ell > -n$), of the form 
	\[
		K(x) = |x|^\ell \Psi\bigg(\frac{x}{|x|}\bigg) - Q(x) \log |x|,
	\]  
	where $\Psi : \Sbf^{N-1} \to \Hom(W,V)$ is smooth and $Q$ is a $\Hom(W,V)$-valued polynomial  (this follows from~\cite[Thms.~7.1.16-7.1.18; Eqn.~(7.1.19)]{Hormander}, when applied component-wise to the coordinates of $K$). 
%

	Now, let us write $\tilde f$ to denote the (trivial) extension by $0$ of $f$ to $L^p(\Rbf^n;W)$. Since $\tilde f$ is compactly supported (on $\R^n$) and $K$ is locally integrable, we may define a locally $p$-integrable map $v \coloneqq K \star \tilde f \in \mathscr S'(\R^n;V)$. Note that $K$ is, in fact, a fundamental solution of $\Acal$ in the sense that 
	\[
		\Acal K = \delta_{0_W} \quad \text{in $\R^n$}.
	\]	 
Now, let us fix $R > 0$ and notice that by construction  we get
	\begin{equation}\label{eq:est1}
		\|v\|_{L^p(B_R)} \le C \|\tilde f\|_{L^p} = C\|f\|_{L^p(\Omega)},
	\end{equation}
	for some constant depending solely on $N, p,R$ and $\Acal$. Next, let $\alpha$ be a multi-index of order $|\alpha| = k =N + \ell$  and let $p^\alpha(\xi) = (2\pi \mathrm i)^{|\alpha|} \xi^\alpha$ be the polynomial associated to the Fourier transform of $D^\alpha$. Since the degrees of $A$ and $p^\alpha$ are strictly larger than $\ell$, applying the Fourier transform on $\Acal v$ and $D^\alpha$ gives the distributional identities
	\[
		\Fcal(\Acal v) = A\Fcal K [\Fcal \tilde f] = A P^{\frarg} [\Fcal \tilde f ]= (A P)^{\frarg}[\Fcal \tilde f] = \Fcal \tilde f.
	\]
	Moreover,
	\[
		\Fcal(D^\alpha v) = p^\alpha\Fcal K  [\Fcal\tilde f] = p^\alpha P^{\frarg} [\Fcal\tilde f] = (p^\alpha P)^{\frarg} [\Fcal \tilde f]. 
	\]
	Thus, applying the inverse Fourier transform to the first identity gives $\Acal v = \tilde f$ as distributions over $\Rbf^n$. Furthermore, since $\Omega$ is an open set, we obtain that the linear map
	\[
		\Acal^{-1}[f] \coloneqq  (K \star \tilde f)|_\Omega
	\]
	solves the equation
	\[
		\Acal u = f \qquad \text{in $\mathscr D'(\Omega;W)$}.
	\]
	This proves the first statement.

	Applying the inverse Fourier transform to the second identity and observing that $p^\alpha P$ defines a smooth zero-homogeneous map on $\Rbf^n - \{0\}$ allows us to apply Mihlin's multiplier theorem, which gives the homogeneous norm estimate 
	\begin{equation}\label{eq:est2}
		\|D^\alpha (\Acal^{-1}[f])\|_{L^p(\Omega)} \le \|D^\alpha v\|_{L^p} \le C \|\tilde f\|_{L^p} = C\|f\|_{L^p(\Omega)}\,,
	\end{equation}
	for some $C = C(n,p,\Acal)$. 
	
	To conclude, we set $R \coloneqq \diam \Omega$ and observe that up to a translation $x \mapsto x - x_0$ with $x_0 \in \Omega$, and iteration of the classical Poincar\'e inequalities on $W^{n + \ell,p}(B_R)$ and the estimates~\eqref{eq:est1}-\eqref{eq:est2} render the sought estimate
	\begin{equation}\label{eq:est3}
		\|\Acal^{-1}[f]\|_{W^{k,p}(\Omega)} \le \|v\|_{W^{N + \ell,p}(B_R)} \le C(p,\diam \Omega,A) \|f\|_{L^p(\Omega)}.
	\end{equation}
	

	Here, we have used that both $n$ and $\ell$ depend on $\Acal$. 

Now that we have constructed $\Acal^{-1}$, we may construct $\tilde S$ using the representation of negative Sobolev. Let us fix $r$ as in the assumptions. We distinguish the following cases, which yield the second and third statements of the Lemma, respectively: 
\begin{enumerate}[left=0pt,label=(\alph*)]
\item if $p \neq 2$ or no regularity on $\Omega$ is imposed, we define the \textbf{non-linear} map
\[
	 H_r \sigma \coloneqq \mathrm{arg~min} \set{\|g\|_{L^p(\Omega)} }{g\in L^p(\Omega;W)^{\binom{N + r -1}{r}}, \diverg^r g = \sigma};
\]
\item if $p = 2$ or $\Omega$ is a smooth simply connected regular domain, then we define $H_r = S$, where $S$ is the \textbf{linear} map from Proposition~\ref{lem:Sob} for $k = r$.
\end{enumerate}

Let $\sigma \in W^{-r,p}(\Omega;W)$ be given, so that 
\[
	\sigma = \diverg^r(H_r \sigma) 
\]
and
\[
  \|H_r \sigma\|_{L^p(\Omega)} \le C \|\sigma\|_{W^{-r,p}(\Omega)} \,.
\]
Let us define 
\[
 \tilde S_r \sigma \coloneqq \sum_{|\beta| = r}D^\beta (\Acal^{-1}[(H_r \sigma)_\beta])\,,
\]
and notice that $\tilde S_r$ is linear whenever $H_r$ is (the last assertion of the Lemma follows from this).
Moreover, the estimates we proved for $\Acal^{-1}$ convey the following estimate: 
\begin{equation}\label{eq:pera}
\begin{split}
	\|\tilde S_r\sigma\|_{W^{k-r,p}(\Omega)} & \lesssim  \|D^k(\Acal^{-1}[ H_r\sigma])\|_{L^p(\Omega)} \\  
	& \lesssim   \| H_r \sigma\|_{L^p(\Omega)}
\lesssim \|\sigma\|_{W^{-r,p}(\Omega)}\,.
\end{split}
\end{equation}

Lastly, we observe that $\tilde S_r$ is a solution operator in the sense that 
$$\Acal (\tilde S_r \sigma) =  \sigma \quad \text{for all $\sigma \in \dot W^{-r,p}(\Omega;W)$.}$$ 
This follows from the commutativity of distributional derivatives since (using the double-index summation convention)
\[
	\Acal (\tilde S_r \sigma) = D^\beta A_\alpha D^\alpha (\Acal^{-1}[(H_r \sigma)_\beta]) = D^\beta (H_r \sigma)_\beta = \diverg^r H_r\sigma = \sigma.
\]
In particular,
\begin{equation}\label{eq:manzana}
	\Acal (\tilde S_r \Acal u) = \Acal u \qquad \text{whenever $\Acal u \in W^{-r,p}(\Omega;V)$}.
\end{equation}
Notice that, by construction $\tilde S_r$ extends $\tilde S_m$ whenever $k \ge r \ge  m\ge 0$. Thus, defining $\tilde S \coloneqq \tilde S_k$, 
the second and third statements of the lemma follow directly from~\eqref{eq:pera} and~\eqref{eq:manzana}. This completes the proof.
\end{proof}

\section{Proofs of the main results}

%

\subsection*{Proof of Theorem~\ref{thm:2}}
Define $T : W^{\Acal,p}(\Omega) \subset L^p(\Omega;V) \longrightarrow L^p(\Omega;V)$ as
\[
	Tu \coloneqq u - \Acal^{-1}[\Acal u], \quad D(T) = W^{\Acal,p}(\Omega).
\]	
The fact that $T$ is well-defined follows from the triangle inequality and the fact that $\Acal u \in L^p(\Omega;W)$ for all $u \in W^{\Acal,p}(\Omega)$. Clearly, $T$ is linear in its domain of definition due to the linearity of $\Acal^{-1}$ and $\Acal$.
By construction, the composition $\Acal \circ T$ is the zero map. Moreover, $Tu = u$ for all $u \in N_p(\Acal,\Omega)$. This proves that $R(T) = N_p(\Acal,\Omega)$. Notice that  
\[
T^2 u = Tu - \Acal^{-1}[\Acal Tu] \stackrel{(\Acal \circ T \equiv 0)}= Tu \quad \forall u \in D(T),
\]
which proves that $T$ is an unbounded linear projection. We are left to verify the estimates for $T$ and $\id - T$. If $u \in W^{\Acal,p}(\Omega)$, then the estimates for $\Acal^{-1}$ imply
\[
\|Tu\|_{L^p(\Omega)}  \le \|u\|_{L^p(\Omega)} + \|\Acal^{-1}[\Acal u]\|_{W^{k,p}(\Omega)}  \le \|u\|_{L^p(\Omega)} + C \|\Acal u\|_{L^p(\Omega)}.
\]
Similarly,
\[
\|u - Tu\|_{W^{k,p}(\Omega)}  \le \|\Acal^{-1}[\Acal u]\|_{W^{k,p}(\Omega)} \le  C \|\Acal u\|_{L^p(\Omega)}.
\]
This finishes the proof. 

\subsection*{Proof of Corollary~\ref{cor:H1}} That $W^{\Acal,p}(\Omega) = N_p(\Acal,\Omega) + \Acal^{-1}[L^p(\Omega;W)]$ follows from the fact $N_p(\Acal,\Omega) +  W^{k,p}(\Omega;V) \subset W^{\Acal,p}(\Omega)$ and that every $u \in W^{\Acal,p}(\Omega)$ can be written as 
\[
	u = Tu + (u - Tu) = Tu + \Acal^{-1}[\Acal u].
\]
Moreover, $N_p(\Acal,\Omega) \subset W^{k,p}(\Omega)$ is closed because it coincides with the null-space of the linear map $\Acal: W^{\Acal,p}(\Omega) \to L^p(\Omega;W)$.  Let us verify that $R(\Acal^{-1})$ is closed in $W^{k,p}(\Omega)$. Let $g_j \coloneqq \Acal^{-1}[f_j]\in R(\Acal^{-1})$ and assume that $g_j \to g$ strongly in $W^{k,p}(\Omega)$ so that $f_j = \Acal g_j \to \Acal g$ strongly in $L^p(\Omega;W)$. Using the continuity of $\Acal^{-1}$ we get
\[
	\|\Acal^{-1}[\Acal g] - g_j\|_{W^{k,p}(\Omega)} \lesssim \|\Acal g - f_j\|_{L^p(\Omega;W)} \to 0,
\]
which shows that $g = \Acal^{-1}[\Acal g] \in R(\Acal^{-1})$. Since $g_j$ was an arbitrary convergent sequence, this shows that $R(\Acal^{-1})$ is indeed closed in $W^{k,p}(\Omega)$. We are left to prove that  if
\[
	v + \Acal^{-1}[f] = 0, \quad \text{for some $v \in N_p(\Acal,\Omega), f \in L^p(\Omega;W)$}, 
\]
then $v = \Acal^{-1}[f] = 0$. To see this, we apply $\Acal$ to this identity to get $f = 0$. The conclusion then follows from the identity $v = - \Acal^{-1}[f] = 0$.

\subsection*{Proof of Corollary~\ref{cor:weak}} Let $\tilde S : L^p(\Omega;W) \to L^p(\Omega;V)$ be the (possibly non-linear) extension of $\Acal^{-1}$ constructed in Lemma~\ref{lem:existence}. Then, by the second statement in that lemma and writing $u = \tilde S u - (u - \tilde Su)$, we get
\begin{align*}
	\min_{v \in N_p(\Omega;V)} \|u - v\|_{W^{k-r,p}(\Omega)} & \le \|\tilde S u\|_{W^{k-r,p}(\Omega)}\\
	&  \le C(p,r,\diam\Omega,A) \, \|\Acal u\|_{W^{-r,p}(\Omega)}\,.
\end{align*} 
This finishes the proof. 

\subsection*{Proof of Theorem~\ref{thm:FM} and Corollary~\ref{cor:R}} The proofs are analogous to the proof of Theorem~\ref{thm:2} and Corollary~\ref{cor:H1}, with the exception that, instead of $T$, the proof for these statements appeals to defning the linear map 
\[
\overline Tu \coloneqq u - \Acal^{-1} [\Acal u], \qquad u \in L^p(\Omega;V),
\]
where $\Acal^{-1} = \tilde S$ is (under the assumptions of Theorem~\ref{thm:FM} and Corollary~\ref{cor:R}) the linear inverse map of $\Acal$ from Lemma~\ref{lem:existence} for $r = k$. The boundedness of $\cl T$ (which is the only difference in the proofs) follows from the estimates
\begin{align*}
	\|\cl Tu\|_{L^p(\Omega)} & \le \|u\|_{L^p(\Omega)} + \|\Acal^{-1} u\|_{L^p(\Omega)} \\
	& \lesssim \|u\|_{L^p(\Omega)} + \|\Acal u\|_{\dot W^{-k,p}(\Omega)} \lesssim \|u\|_{L^p(\Omega)}\,,\\
\end{align*}
where in passing to the last inequality, we have used the continuity of the embedding $D^k[L^p(\Omega)] \embed W^{-k,p}(\Omega)$.

\subsection*{Proof of Corollary~\ref{cor:measures}}  Let $1 < q < N/(n-1)$ so that $N < q' < \infty$. If $\Omega$ is sufficiently regular, then Morrey's embedding $W_0^{1,q'}(\Omega) \embed C_0(\Omega)$ holds. Since the image of the embedding is dense, we deduce that 
  $\Mcal_b(\Omega;W) \embed W^{-1,q}(\Omega;W)$, which, together with Thm~\ref{thm:FM}, gives
  \[
  	\| u - \tilde Tu\|_{W^{-1,q}(\Omega)} \lesssim \|\Acal u\|_{W^{-1,q}(\Omega)} \le C(q,N)\, |\Acal u\\|(\Omega).
  \]
  This finishes the proof.

\subsection*{Proof of the weak Korn estimates}

To work out the case when $\Acal$ is not a maximal-rank operator, we must briefly discuss the dichotomy between constant-rank operators and (higher-order elliptic) differential complexes. Recently, Raita~\cite{raita2019potentials} (see also~\cite{simental}) showed that there exists a \emph{surjective} correspondence between higher-order elliptic complexes and constant-rank operators. There, the author proved that if $\Acal$ satisfies the constant rank property, then there exists a differential complex
\[
\xymatrix@1{
&
	{\mathscr D'(\Rbf^n;V)}\ar[r]^-{\Acal} & {\mathscr D'(\Rbf^n;W)}\ar[r]^{\Qcal} & {\mathscr D'(\Rbf^n;X)}}
\]
where $\Qcal$ is a constant-rank operator on $\Rbf^n$, from $W$ to $X$, of order $k_\Qcal$ and such that the symbol complex
\[
\xymatrix@1{
& \;
	{ V}\ar[r]^-{A(\xi)} \; & \;  {W}\ar[r]^{Q(\xi)} \; & \;  {X}}
\]
defines an \emph{elliptic complex}, that is, 
\[
\im A(\xi) = \ker Q(\xi) \qquad \text{for all $\xi \in \Rbf^n - \{0\}$.}
\]
The converse assertion, that symbols of elliptic complexes define operators of constant rank is a trivial consequence of the lower semicontinuity of the rank and the upper semicontinuity of the dimension of the kernel. ( \footnote{This simple argument for the converse statement was shown to me by S. M\"uller, during my Ph.D. studies.} ) 
With these considerations, it is natural to define a generalized Laplace--Beltrami operator on $W$ by setting (cf.~\cite{GR1})
\begin{equation}\label{eq:beltrami2}
\triangle_W \coloneqq [\Acal\Acal^*]^{k_\Qcal} + [\Qcal^*\Qcal]^{k}.
\end{equation} 
\begin{remark}
The associated \emph{annihilator} $\Qcal$ of $\Acal$ is, in general, not uniquely defined, which implies that $\triangle_W$ may also not be uniquely defined.
\end{remark}
The following observation will allow us to make use of the Lemma~\ref{lem:existence} and Theorem~\ref{thm:2} when considering the operator $\Acal = \triangle_W$. 
\begin{proposition}The operator $\triangle_W$ is an elliptic system (as in Definition~\ref{def:es} below). In particular, $\triangle_W$ is a maximal-rank operator.
\end{proposition}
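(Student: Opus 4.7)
The plan is to verify ellipticity of $\triangle_W$ at the level of symbols, and then observe that any elliptic square system automatically has maximal rank. Since $\triangle_W$ maps $W$ to $W$, its principal symbol $T(\xi) : W \to W$ is a square linear map, so injectivity of $T(\xi)$ for every $\xi \in \Rbf^n - \{0\}$ is equivalent to invertibility. In that case $\im T(\xi) = W$ for all non-zero $\xi$, and maximal rank follows trivially (the image is constant, equal to all of $W$).

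The principal symbol of $\triangle_W$ is
\[
T(\xi) = [A(\xi)A(\xi)^*]^{k_\Qcal} + [Q(\xi)^*Q(\xi)]^{k}, \qquad \xi \in \Rbf^n,
\]
a sum of two positive semi-definite self-adjoint endomorphisms of $W$. A quick preliminary step is to check that the orders match so that $\triangle_W$ is indeed homogeneous: $[\Acal\Acal^*]^{k_\Qcal}$ has order $2 k\, k_\Qcal$, and $[\Qcal^*\Qcal]^{k}$ has order $2 k_\Qcal\, k$, so both summands contribute to the principal part at order $2 k\, k_\Qcal$.

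The key step is then a simple symbol computation. For any positive semi-definite self-adjoint operator $M$ on a finite-dimensional inner product space and any integer $m \ge 1$, one has $\ker M^m = \ker M$; consequently,
\[
\ker [A(\xi)A(\xi)^*]^{k_\Qcal} = \ker A(\xi)^* = (\im A(\xi))^\perp,
\qquad
\ker [Q(\xi)^*Q(\xi)]^{k} = \ker Q(\xi).
\]
Since a sum of positive semi-definite operators has kernel equal to the intersection of the summand kernels, I would invoke the elliptic-complex identity $\im A(\xi) = \ker Q(\xi)$ (valid for $\xi \neq 0$) to conclude
\[
\ker T(\xi) = (\im A(\xi))^\perp \cap \ker Q(\xi) = (\ker Q(\xi))^\perp \cap \ker Q(\xi) = \{0_W\},
\]
which proves ellipticity; maximal rank follows as described above.

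I do not anticipate any genuine obstacle: the argument is purely algebraic at the symbol level, with the elliptic-complex hypothesis doing all the work. The only thing to watch is the bookkeeping of orders and the standard identification $\ker A(\xi)^* = (\im A(\xi))^\perp$ with respect to the fixed inner products on $V$ and $W$ implicit in the definitions of $\Acal^*$ and $\Qcal^*$.
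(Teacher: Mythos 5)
Your proof is correct and takes essentially the same approach as the paper: both reduce to showing the symbol $T(\xi)$ has trivial kernel for $\xi \neq 0$ by exploiting positive semi-definiteness of the two summands and the elliptic-complex identity $\im A(\xi) = \ker Q(\xi)$. The only difference is bookkeeping: you compute $\ker T(\xi) = (\im A(\xi))^\perp \cap \ker Q(\xi)$ directly and observe it is $(\ker Q(\xi))^\perp \cap \ker Q(\xi) = \{0\}$, whereas the paper splits $w = A(\xi)[a] + Q(\xi)^*[b]$ along $W = \im A(\xi) \oplus \im Q(\xi)^*$ and shows each component must vanish separately; your route is a bit more streamlined but is the same linear-algebra idea.
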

\begin{proof}Let us write $\triangle = \triangle_W$. Since $\triangle$ is a homogeneous operator from $W$ to $W$, we are only left to verify that $\triangle$ is elliptic.	By construction we have that $p(\xi)[w] = \dprb{\triangle(\xi)w,w} = |A(\xi)^*[w]|^{2k_\Qcal} + |Q(\xi)[w]|^{2k}$ for all $\xi \in \Rbf^n$ and $w \in W$. In particular, $p$ vanishes if and only if $A(\xi)^*[w] = 0$ and $Q(\xi)[w] = 0$. A well-known linear algebra property of exact sequences is that they split, that is, $W = \im A(\xi) \oplus \im Q(\xi)^*$ so that $w = A(\xi)[a] \oplus Q(\xi)^*[b]$  and $p(\xi)[w] = 0$ if and only if $[A(\xi)^*A(\xi)]a = 0$ and $[Q(\xi)Q(\xi)^*]b = 0$. Since $\ker(M^*M) = \ker (M)$, the equality with zero may only occur provided that $\xi = 0$ or that $A(\xi)[a] = 0$ and $Q(\xi)^*[b] = 0$. Therefore, by the  homogeneity of $\triangle$, we deduce that there exists a constant $C$ satisfying
	\[
	\forall \xi \in \Rbf^n, \qquad |\triangle(\xi)[w]|  \ge C|\xi|^{2k k_\Qcal} |w| \qquad (w \in W).
	\]
	This proves that $\triangle$ is indeed an elliptic operator from $W$ to $W$, and hence it defines an elliptic system. 
\end{proof}

\subsection*{Proof of Theorem~\ref{thm:Z}} Let us recall that the sequence 
\[
\xymatrix@1{
&
	{\mathscr D'(\Rbf^n;V)}\ar[r]^-{\Acal} & {\mathscr D'(\Rbf^n;W)}\ar[r]^{\Qcal} & {\mathscr D'(\Rbf^n;X)}}
\]
defines a differential complex and therefore $\Acal^* \circ \Qcal^* = 0$ 
as distributional differential operators.
In particular, 
\begin{align}
	\Acal^* \circ \triangle_W  & = \Acal^* \circ (\Acal \circ \Acal^*)^{k_\Qcal} \label{eq:AD}
\end{align}
also as differential operators.

Let us write $\tilde S_W$ to denote the (non-linear) solution operator associated with the operator $\triangle_W$ (constructed in Lemma~\ref{lem:existence}). Here, we are using that $\triangle_W$ is a maximal-rank operator from $W$ to $W$. Define 
\[
	\Pi u \coloneqq u - \Acal^* (\Acal \Acal^*)^{k_\Qcal-1}  \tilde S_W \Acal u, \qquad u \in L^p(\Omega;V).
\]
Using the estimates for $\tilde S_W$, we obtain the estimate (recall that $\triangle_W$ is of order $2kk_\Qcal$) 
\begin{align*}
	\|\Pi u\|_{L^p(\Omega)} &  \le \|u\|_{L^p(\Omega)} + \|\tilde S_W \circ \Acal u\|_{W^{2kk_\Qcal - k,p}(\Omega)} \\
	& \lesssim \|u\|_{L^p(\Omega)} + \|\Acal u\|_{W^{-k,p}(\Omega)} \lesssim \|u\|_{L^p(\Omega)} \,.
\end{align*}
Similarly, for each integer $0 \le r \le k$, 
\begin{equation}
\begin{split}
	\|u - \Pi u\|_{W^{k-r,p}(\Omega)} &  \le \|\tilde S_W \circ \Acal u\|_{W^{2kk_\Qcal - r,p}(\Omega)}  \\
	& \lesssim  \|\Acal u\|_{W^{-r,p}(\Omega)}  \,. \label{eq:W}
\end{split}
\end{equation}
Moreover, by construction it holds
\begin{equation}
\begin{split}
	\Delta_\Acal\Pi u 
	& \;=\; \Delta_\Acal u - \Acal^* \Delta_\Acal^{k_\Qcal}\tilde S_W \Acal u \\
	& \stackrel{\eqref{eq:AD}}= \Delta_\Acal u - \Acal^* \triangle_W\tilde S_W \Acal u = \Delta_\Acal u - \Delta_\Acal u = 0, \label{eq:WW}
\end{split}
\end{equation}
which shows that $\im \Pi \le N_p(\Delta_{\Acal};\Omega)$.
The sought Korn distance estimates then follow from~\eqref{eq:W}-\eqref{eq:WW}.


\section{Examples}\label{sec:examples} In this section, we collect examples of several operators satisfying the maximal-rank property, for which Theorem~\ref{thm:2} applies. 

\begin{notation*}In order to keep the statements as simple as possible, we shall henceforth use the convention that $\|\sigma\|_{L^p(\Omega)} = \infty$ whenever $\sigma \notin L^p(\Omega)$.
\end{notation*}

\subsection{The Divergence} The  row-wise divergence (and in particular the scalar divergence when $N=1$) defined by
\[
\diverg u = \bigg(\sum_{j = 1} \frac{\partial u^i_j}{\partial x_j}\bigg)_{i = 1,\dots,N}, \qquad u:\Rbf^n \to \Rbf^{N \times n},
\]
is a maximal-rank operator on $\R^n$, from $\Rbf^{N \times n}$ to $\Rbf^N$. 
\begin{proof}
	It suffices to show that $\diverg(\xi)$ has rank $N$ for all non-zero $\xi \in \R^n$. Indeed, $\diverg(\xi)[v \otimes \xi] = (v \otimes \xi)\cdot \xi=v$ for every $v \in \Rbf^N$ and all $\xi \in \Rbf^n-\{0\}$. 
\end{proof}
In particular, we have the following Sobolev-distance estimates:

\begin{proposition} There exists a constant $C = C(p,n,N,\diam\Omega)$ such that
	\[
	\inf_{\substack{v \in L^p(\Omega;V),\\\diverg v = 0}}\|u - v\|_{W^{1,p}(\Omega)} \le C\|\!\diverg u\|_{L^p(\Omega)}
	\]
	for all $p$-integrable tensor-fields $u: \Omega \to \Rbf^{M \times N}$ with $p$-integrable divergence.
\end{proposition}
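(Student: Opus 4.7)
The plan is essentially to invoke the machinery already constructed in the paper, since the hard work has been done in Theorem~\ref{thm:2} (or equivalently in Corollary~\ref{cor:weak} with $r=0$). The only thing one needs to verify is that the divergence falls within the scope of that theorem, and this is exactly the content of the short lemma that precedes the proposition: $\diverg(\xi)[v \otimes \xi] = v$ shows that the image of the symbol $\diverg(\xi)$ equals $\R^N$ for every nonzero $\xi$, so $\diverg$ is a maximal-rank first-order operator from $V = \R^{N\times n}$ to $W = \R^N$.

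With this in hand, I would apply Theorem~\ref{thm:2} with $\Acal = \diverg$ and $k = 1$ to the given tensor field $u \in L^p(\Omega;\R^{N\times n})$. Since $\diverg u \in L^p(\Omega;\R^N)$, we have $u \in W^{\Acal,p}(\Omega)$, so $Tu$ is well-defined and lies in $N_p(\diverg,\Omega)$. Setting $v = Tu$, the third bullet of Theorem~\ref{thm:2} gives
\[
\|u - Tu\|_{W^{1,p}(\Omega)} \le C\, \|\diverg u\|_{L^p(\Omega)},
\]
where $C$ depends only on $p$, $\diam\Omega$, and the fixed symbol $A$ of $\diverg$ (which in turn depends only on $n$ and $N$). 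Taking the infimum over all divergence-free $v \in L^p(\Omega;\R^{N\times n})$ on the left-hand side yields the desired inequality. In the degenerate case $\diverg u \notin L^p(\Omega)$ the right-hand side is $+\infty$ (by the notational convention adopted just before the proposition), so the estimate holds trivially.

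There is no real obstacle: the proposition is a direct specialization, and the main work — constructing the solution operator $\Acal^{-1}$ via the Moore--Penrose pseudo-inverse of the symbol, and then the projection $T = \id - \Acal^{-1}\Acal$ — was already carried out in Lemma~\ref{lem:existence} and Theorem~\ref{thm:2}. One could equivalently quote Corollary~\ref{cor:weak} with $r=0$ and $k=1$, which yields the inequality verbatim. I would therefore keep the proof to three lines: verify the maximal-rank property (already done above), invoke Theorem~\ref{thm:2}, and conclude.
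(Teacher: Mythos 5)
Your proposal is correct and matches the paper's own argument exactly: verify the maximal-rank property of $\diverg$ via the identity $\diverg(\xi)[v\otimes\xi]=|\xi|^2 v$ (so the symbol is onto $\R^N$ for every $\xi\neq 0$), then invoke Theorem~\ref{thm:2} (equivalently Corollary~\ref{cor:weak} with $r=0$) with $\Acal=\diverg$, $k=1$. Nothing is missing.
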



One may also consider the $k$\textsuperscript{th} order divergence operator
\[
\diverg^k U = \sum_{|\alpha| = k} D^\alpha U_\alpha, \qquad U = (U_\alpha)_{|\alpha| = k},
\]
which defines a maximal-rank  $k$\textsuperscript{th} order operator on $\Rbf^n$, from $\Rbf^{\binom{N + k -1}{k}}$ to $\Rbf$, for which we get the following estimate:
\begin{proposition}
	Let $k$ be a positive integer. Then
	\[
	\inf_{\substack{V \in L^p(\Omega;\R^n),\\\diverg^k V = 0}} \|U - V\|_{W^{k,p}(\Omega)} \le C(p,k,n,\diam\Omega)\|\diverg^k U\|_{L^p(\Omega)}
	\]
	for all $U \in L^p(\Omega;\Rbf^{\binom{N + k - 1}{k}})$.
\end{proposition}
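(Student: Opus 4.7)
The plan is to read off the statement as a direct application of Corollary~\ref{cor:weak} at exponent $r=0$, after verifying the maximal-rank property claimed in the sentence preceding the proposition. So the work splits into a short symbol computation and a citation of the general theory.

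First I would identify the principal symbol. For a covector $\xi \in \Rbf^n$ the symbol of $\diverg^k$ is the linear functional
\[
\diverg^k(\xi)\,[\,U\,] \;=\; \sum_{|\alpha|=k} \xi^{\alpha}\, U_{\alpha}, \qquad U = (U_\alpha)_{|\alpha|=k}.
\]
Since the target is one-dimensional, maximal rank is equivalent to the non-vanishing of this functional for every $\xi \neq 0$. Given such a $\xi$, at least one coordinate $\xi_j$ is non-zero; taking the multi-index $\alpha_0 = k\,e_j$ gives $\xi^{\alpha_0} = \xi_j^{\,k} \neq 0$, so $\diverg^k(\xi)$ is a non-trivial functional. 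Hence $\im \diverg^k(\xi) = \Rbf$ for all $\xi \in \Rbf^n - \{0\}$, which is exactly the maximal-rank condition (and, in passing, re-proves the parenthetical claim just before the proposition).

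Second I would apply Corollary~\ref{cor:weak} with the operator $\Acal = \diverg^k$ (order $k$, from $V = \Rbf^{\binom{n+k-1}{k}}$ to $W = \Rbf$) and with $r = 0$. Since $\dot W^{0,p}(\Omega) = L^p(\Omega)$, that estimate specializes to
\[
\min_{V \in N_p(\diverg^k,\Omega)} \|U - V\|_{W^{k,p}(\Omega)} \;\le\; C(p,0,\diam\Omega, \diverg^k)\,\|\diverg^k U\|_{L^p(\Omega)}
\]
for every $U \in L^p(\Omega;\Rbf^{\binom{n+k-1}{k}})$ with $\diverg^k U \in L^p(\Omega)$; if the divergence fails to be $p$-integrable, the stated inequality is trivial thanks to the convention in the opening Notation of Section~\ref{sec:examples}.

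No serious obstacle arises: the only subtlety worth flagging is the dependence of the constant. Corollary~\ref{cor:weak} lists a dependence on the symbol $A$, but here the coefficients $A_\alpha$ of $\diverg^k$ are fixed combinatorial data determined by $n$ and $k$, so the constant ultimately depends only on $p, k, n$, and $\diam\Omega$, matching the dependence asserted in the statement.
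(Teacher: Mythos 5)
Your proof is correct and follows the same route as the paper: verify that $\diverg^k$ has maximal rank (for a scalar target this is just non-vanishing of the symbol for $\xi\neq 0$) and then cite the general distance estimate. The only cosmetic difference is the witness used to show the symbol does not vanish — you take the pure multi-index $\alpha_0 = k\,e_j$ yielding $\xi_j^k$, whereas the paper evaluates on $\otimes^k\xi$ to get $|\xi|^{2k}$; both are equally immediate.
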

\begin{proof}The associated principal symbol is defined for $\xi \in \Rbf^n$ and $a \in \Rbf^{\binom{N + k -1}{k}}$ by
	\[
	\diverg^k(\xi)[a] = \sum_{|\alpha| = k} a_\alpha \xi^\alpha.
	\]
	In this case $\diverg^k(\xi)[\otimes^k \xi] = |\xi|^{2k}$, which shows that $\diverg^k$ is also a maximal-rank operator.
\end{proof}

\subsection{The Laplacian} The Laplacian operator 
\[
\Delta u = \sum_{i = 1}^n \frac{\partial^2 u}{\partial x_i}, \qquad u:\Rbf^n \to \Rbf,
\]
is a second order operator on $\Rbf^n$, from $\Rbf$ to $\Rbf$. Its principal symbol is given by the map $\xi \mapsto |\xi|^2$, so that in particular $\Delta$ has maximal rank $1$. In light of Theorem~\ref{thm:2} we obtain the following Sobolev estimate in terms of the distance to the space of harmonic maps:
\begin{proposition} There exists a constant $C = C(p,n,\diam\Omega)$ such that
	\[
	\inf_{\substack{v \in L^p(\Omega;V),\\ v \; \textnormal{harmonic}}}\|u - v\|_{W^{2,p}(\Omega)} \le C\|\Delta u\|_{L^p(\Omega)}\,,
	\]
	for all $p$-integrable functions $u : \Omega \to \R$.
\end{proposition}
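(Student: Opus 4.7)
The plan is to treat this proposition as an immediate corollary of Theorem~\ref{thm:2} applied to the scalar Laplacian $\Delta$, since the preceding paragraph already verifies that $\Delta$ is a maximal-rank operator.

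First I would recall that $\Delta$ is a homogeneous second-order operator on $\R^n$, from $V = \R$ to $W = \R$, with principal symbol $\Delta(\xi) = |\xi|^2 \in \Hom(\R,\R)$. For every $\xi \in \R^n - \{0\}$ this symbol is a nonzero scalar, hence $\im \Delta(\xi) = \R$ is independent of $\xi$; this is exactly the maximal-rank condition. Consequently, the hypotheses of Theorem~\ref{thm:2} are satisfied with $\Acal = \Delta$, $k = 2$.

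Next I would identify the relevant null-space. By definition,
\[
N_p(\Delta,\Omega) = \setb{v \in L^p(\Omega;\R)}{\Delta v = 0 \text{ in } \mathscr D'(\Omega)}\,,
\]
which is precisely the space of $p$-integrable (weakly, hence by Weyl's lemma classically) harmonic functions on $\Omega$. Thus the infimum appearing in the statement of the proposition is exactly $\inf_{v \in N_p(\Delta,\Omega)} \|u - v\|_{W^{2,p}(\Omega)}$.

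Finally, I would apply statement~3 of Theorem~\ref{thm:2} to the projection $T$ associated with $\Delta$. If $u \in L^p(\Omega;\R)$ has $\Delta u \in L^p(\Omega;\R)$, then $u \in W^{\Delta,p}(\Omega)$ and $Tu \in N_p(\Delta,\Omega)$, so
\[
\inf_{v \in N_p(\Delta,\Omega)} \|u - v\|_{W^{2,p}(\Omega)} \le \|u - Tu\|_{W^{2,p}(\Omega)} \le C\|\Delta u\|_{L^p(\Omega)}\,,
\]
where $C = C(p,\diam\Omega,\Delta) = C(p,n,\diam\Omega)$. If $\Delta u \notin L^p(\Omega)$, the inequality is trivial under the notational convention fixed at the start of Section~\ref{sec:examples}. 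There is no real obstacle here: the only nontrivial ingredient is Theorem~\ref{thm:2}, and the verification of maximal rank for $\Delta$ is essentially one line. Equivalently, one could invoke Corollary~\ref{cor:weak} with $r=0$ to obtain the same conclusion in a single step.
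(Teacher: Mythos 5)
Your proof is correct and follows exactly the paper's intended one-line argument: verify that the scalar Laplacian has maximal rank via the symbol $\xi \mapsto |\xi|^2$, identify $N_p(\Delta,\Omega)$ with the $p$-integrable harmonic functions (Weyl's lemma guaranteeing these are genuinely harmonic is a nice, correct touch, though the infimum is anyway over distributional solutions), and then invoke Theorem~\ref{thm:2} (or equivalently Corollary~\ref{cor:weak} with $r=0$), together with the section's notational convention $\|\sigma\|_{L^p(\Omega)} = \infty$ when $\sigma \notin L^p(\Omega)$ to cover the case $\Delta u \notin L^p$. No gaps.
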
 
A similar statement holds for the bi-Laplacian operator
\[
\Delta^2 u = \sum_{i,j = 1}^n \frac{\partial^2 u}{\partial x_j}\frac{\partial^2 u}{\partial x_i}, \qquad u : \Rbf^n \to \Rbf.
\]

\subsection{The Cauchy--Riemann equations}
The operator 
\begin{align*}
Lu \coloneqq \left(\frac{\partial u}{\partial x_1} - \frac{\partial  v}{\partial x_2},\frac{\partial u}{\partial x_2} + \frac{\partial  v}{\partial x_1} \right), \qquad u,v : \Rbf^2 \to \Rbf,
\end{align*}
associated to real-coefficient equations arising from the del-var operator
\[
\partial_{\overline{z}} = \frac 12 (\partial_x + \mathrm i \partial_y), \qquad f(x + \mathrm iy) = u(x,y) + \mathrm i v(x,y),
\]
conforms a first-order system on $\Rbf^2$, from $\Rbf^2$ to $\Rbf^2$, with associated principal symbol tensor
\[
\Lbb(\xi) = \begin{pmatrix}
\xi_1 & -\xi_2 \\
\xi_2 & \xi_1
\end{pmatrix} \qquad (\xi \in \Rbf^2).
\]
Since $\det \Lbb(\xi) = |\xi|^2$ for all $\xi \in \Rbf^2$, it follows that the Cauchy--Riemann equations conform a maximal-rank system in the sense of Theorem~\ref{thm:2}. Since $f \in \Hol(\omega)$ if and only if $\partial_{\cl z} f = 0$ on $\omega$, it follows from Theorem~\ref{thm:2} that the distance estimates for holomorphic functions proven in~\cite{Fuchs} also hold for arbitrary (possibly irregular) domains $\omega \subset \Cbb$. The precise statement, which is a refinement of Theorem~\ref{thm:Fuchs}, is the following:
\begin{proposition}\label{prop:20}Let $\omega \subset \Cbb$ be a bounded and open set. There exists a constant $C = C(p,\diam \omega,L)$ such that
	\[
	\inf_{g \, \in \, (\mathrm{Hol} \cap L^p)(\omega)} \|f - g\|_{W^{1,p}(\omega,\C)} \le C \|\partial_{\cl z} f\|_{L^p(\omega)}\,,
	\] 
	for all $p$-integrable complex-functions $f : \omega \to \Cbb$. Here, $W^{1,p}(\omega;\Cbb)$ is the space of functions $f \in L^p(\omega;\Cbb)$ having first-order weak complex partial derivatives in the same space.
\end{proposition}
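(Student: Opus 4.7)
The plan is to deduce Proposition~\ref{prop:20} as a direct consequence of Theorem~\ref{thm:2} applied to the real first-order system $L$ introduced just above the statement. The preceding paragraph already verified that $L$ is a maximal-rank operator on $\Rbf^2$, from $\Rbf^2$ to $\Rbf^2$, since $\det \Lbb(\xi) = |\xi|^2 > 0$ for every $\xi \ne 0$; so Theorem~\ref{thm:2} (or equivalently Corollary~\ref{cor:weak} with $r=0$ and $k=1$) immediately provides, on the bounded open set $\omega \subset \Rbf^2$, a constant $C = C(p,\diam\omega,L)$ such that
\[
\inf_{\substack{w \in L^p(\omega;\Rbf^2)\\ Lw = 0}} \|U - w\|_{W^{1,p}(\omega;\Rbf^2)} \;\le\; C \, \|L U\|_{L^p(\omega;\Rbf^2)}
\]
for every $U \in L^p(\omega;\Rbf^2)$.

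The remaining work is purely a translation between the real and complex formalisms. Given $f = u + \mathrm i v \in L^p(\omega;\Cbb)$, I identify $f$ with the vector field $U=(u,v) \in L^p(\omega;\Rbf^2)$ via the standard $\Rbf$-linear isomorphism. Under this identification, the Cauchy--Riemann equations $\partial_{\bar z} f = \tfrac12 (\partial_x + \mathrm i \partial_y)f$ decompose into the two real equations defining $LU$, so that
\[
2\,\partial_{\bar z} f \;\longleftrightarrow\; LU
\]
componentwise in $L^p(\omega;\Cbb) \cong L^p(\omega;\Rbf^2)$. In particular $\|LU\|_{L^p(\omega;\Rbf^2)} = 2\|\partial_{\bar z} f\|_{L^p(\omega)}$ (up to the choice of norm on $\Rbf^2$ vs.\ $\Cbb$, which only changes the constant $C$), and the nullspace satisfies
\[
N_p(L,\omega) \;=\; \{\,w=(u,v)\in L^p(\omega;\Rbf^2)\;:\; Lw=0\,\} \;\longleftrightarrow\; (\mathrm{Hol}\cap L^p)(\omega),
\]
since $\partial_{\bar z} g = 0$ distributionally on $\omega$ together with $g\in L^p_{\mathrm{loc}}$ yields $g\in \mathrm{Hol}(\omega)$ by Weyl's lemma. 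Analogously $W^{1,p}(\omega;\Cbb)$ corresponds isometrically (up to a fixed constant) to $W^{1,p}(\omega;\Rbf^2)$.

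Substituting these identifications into the distance estimate above yields exactly the claimed bound, with a constant depending only on $p$, $\diam\omega$, and $L$. The main subtlety I anticipate is therefore not analytical but notational: one must be careful that ``$\mathrm{Hol}\cap L^p$'' genuinely coincides with the nullspace $N_p(L,\omega)$ on an arbitrary, possibly irregular, open set $\omega$, which is exactly where Weyl's lemma (applicable thanks to the interior ellipticity of $\partial_{\bar z}$) is needed. Once this identification is in hand, the proposition follows with no further estimation. Note in particular that the regularity assumptions on $\omega$ used in~\cite{Fuchs} are dropped here because Theorem~\ref{thm:2} requires no regularity of $\partial\omega$ at all.
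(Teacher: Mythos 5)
Your argument matches the paper's: both apply Theorem~\ref{thm:2} (equivalently, Corollary~\ref{cor:weak} with $r=0$, $k=1$) to the maximal-rank first-order system $L$, whose maximal-rank property is verified via $\det \Lbb(\xi)=|\xi|^2$, and then identify $N_p(L,\omega)$ with the $p$-integrable holomorphic functions on $\omega$. Your extra care over the real/complex translation and the explicit appeal to Weyl's lemma (to pass from the distributional equation $\partial_{\bar z} g = 0$ to genuine holomorphy on an arbitrary open $\omega$) simply makes explicit what the paper leaves implicit in the remark that $f\in\Hol(\omega)$ if and only if $\partial_{\bar z} f = 0$ on $\omega$.
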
 

In a similar vein, the equations corresponding to the \emph{conformal matrix} inclusion $\nabla u \in K$, where
\[
K \coloneqq \left\{\begin{pmatrix}
a & b \\-b & a 
\end{pmatrix}: a,b \in \Rbf\right\}, \qquad u : \Rbf^2 \to \Rbf^2\,,
\]
conform a first-order system with maximal rank, which conveys the following estimates for gradient conformal maps:
\begin{proposition} There exists a constant $C = C(p,\diam\Omega,K)$ such that
	\[
	\inf_{\substack{V \in L^p(\Omega;\Rbf^2)\,,\\\nabla V \in K}} \|F - V\|_{W^{1,p}(\Omega)} \le C \|L F\|_{L^p(\Omega)} \,.
	\]
	for all vector-fields $F\in L^p(\Omega;\Rbf^2)$.
\end{proposition}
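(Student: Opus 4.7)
The plan is to recognize that the proposition is merely a geometric reformulation of Proposition~\ref{prop:20}, in the sense that the constraint \guillemotleft\,$\nabla V \in K$\,\guillemotright\ is precisely the kernel condition \guillemotleft\,$LV = 0$\,\guillemotright\ for the real Cauchy--Riemann operator introduced earlier. Once this identification is made, the estimate follows directly from Theorem~\ref{thm:2} applied to $L$, since $L$ has already been shown to satisfy the maximal-rank property ($\det \Lbb(\xi) = |\xi|^2 \neq 0$ for $\xi \neq 0$, so $\im L(\xi) = \Rbf^2$ for every non-zero frequency).

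First, I would verify the set-theoretic identity
\[
\setb{V \in L^p(\Omega;\Rbf^2)}{\nabla V \in K \text{ a.e. on } \Omega} \; = \; N_p(L,\Omega).
\]
For the forward inclusion, writing $V = (v_1,v_2)$, the constraint $\nabla V \in K$ forces $\partial_1 v_1 = \partial_2 v_2$ and $\partial_2 v_1 = -\partial_1 v_2$ in the distributional sense, which is exactly the system $LV = 0$. For the reverse inclusion, observe that $L$ being maximal rank on $\Rbf^2$ implies it is elliptic (the symbol $\Lbb(\xi)$ is invertible for all $\xi \ne 0$), so by standard elliptic regularity every distributional solution of $LV = 0$ is smooth; its classical gradient then satisfies $\nabla V \in K$ pointwise.

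Second, I would invoke Theorem~\ref{thm:2} with operator $\Acal = L$, order $k = 1$, and $V = W = \Rbf^2$. This produces an (unbounded) linear projection $T$ onto $N_p(L,\Omega)$ with
\[
\| F - TF \|_{W^{1,p}(\Omega)} \; \le \; C(p,\diam\Omega,L) \, \|LF\|_{L^p(\Omega)},
\]
for every $F \in W^{L,p}(\Omega)$. Since $TF \in N_p(L,\Omega)$, by the identification above $TF$ is an admissible competitor in the infimum, giving
\[
\inf_{\substack{V \in L^p(\Omega;\Rbf^2)\\ \nabla V \in K}} \|F - V\|_{W^{1,p}(\Omega)} \; \le \; \|F - TF\|_{W^{1,p}(\Omega)} \; \le \; C\,\|LF\|_{L^p(\Omega)}.
\]

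Finally, for $F \in L^p(\Omega;\Rbf^2)$ with $LF \notin L^p(\Omega)$, the convention fixed at the beginning of the section renders the right-hand side equal to $+\infty$, so the estimate is automatic. Since $K$ determines $L$ (and vice versa), the constant may equivalently be denoted $C(p,\diam\Omega,K)$. There is no serious obstacle here: the entire content is the algebraic identification of the conformal-matrix target with $\ker L$, after which Theorem~\ref{thm:2} supplies the inequality with no further work.
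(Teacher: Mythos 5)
Your proof is correct and follows precisely the approach the paper intends: identify the constraint set $\setn{V \in L^p(\Omega;\Rbf^2)}{\nabla V \in K}$ with $N_p(L,\Omega)$, then apply Theorem~\ref{thm:2} to the maximal-rank operator $L$. The elliptic-regularity remark tying up the reverse inclusion is a welcome clarification that the paper leaves implicit.
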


\subsection{The Laplace--Beltrami operator} Let $\ell \in \{1,\dots,n-1\}$. The exterior derivative $d$ is the first-order differential operator from $\bigwedge^\ell \Rbf^n$ to $\bigwedge^{\ell +1}\Rbf^n$ associated to the symbol
\[
d(\xi) = \xi \wedge v \qquad (\xi\in \Rbf^n, v \in \bigwedge^\ell \Rbf^n).
\]
By duality, the co-differential $\delta$ is the first-order operator from $\bigwedge^{\ell+1} \Rbf^n$ to $\bigwedge^{\ell} \Rbf^n$ associated to the symbol
\[
\delta(\xi) = (-1)^\ell \star (\xi \wedge \star v),
\]
where $\star$ is the Hodge-star operator acting on alternating forms. The Laplace--Beltrami operator is the $2$\textsuperscript{nd} order operator, from $\bigwedge^\ell \Rbf^n$ to $\bigwedge^\ell \Rbf^n$ defined as
\[
\triangle u \coloneqq  \delta du + d\delta u,
\]
for which we have the following estimate:
\begin{proposition}
	There exists a constant $C = C(p,n,\ell,\diam\Omega)$ such that
	\[
	\inf_{\substack{w \,\in\, L^p(\Omega;\bigwedge^\ell \Rbf^n)\,,\\ \triangle w = 0}} \|u - w\|_{W^{2,p}(\Omega)} \le C\|\triangle u\|_{L^2(\Omega)}
	\]
	for all $p$-integrable $\ell$-form fields $u : \Omega \to \bigwedge^\ell \Rbf^n$.
\end{proposition}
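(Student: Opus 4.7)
The plan is to derive this estimate as a direct application of Corollary~\ref{cor:weak} (taken with $r = 0$, $k = 2$, and $\Acal = \triangle$), so that the whole task reduces to verifying that the Hodge Laplacian is a maximal-rank operator from $\bigwedge^\ell \Rbf^n$ to itself. Once this is settled, the corollary immediately produces
\[
\min_{w \in N_p(\triangle,\Omega)} \|u - w\|_{W^{2,p}(\Omega)} \le C(p,n,\ell,\diam\Omega)\, \|\triangle u\|_{L^p(\Omega)},
\]
which (up to the typographical $L^2$ on the right-hand side, which ought to read $L^p$) is exactly the stated inequality.

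The decisive step is the symbol computation. Substituting the formulas for $d(\xi)$ and $\delta(\xi)$ recalled in the paragraph preceding the statement, one gets, for every $v \in \bigwedge^\ell \Rbf^n$,
\[
\triangle(\xi)[v] \,=\, \delta(\xi)\,d(\xi)[v] + d(\xi)\,\delta(\xi)[v] \,=\, (-1)^{\ell+1}\star(\xi\wedge\star(\xi\wedge v)) + (-1)^\ell\,\xi\wedge\star(\xi\wedge\star v).
\]
The classical Hodge identity on flat Euclidean space asserts that the right-hand side equals $|\xi|^2\,v$. The cleanest verification is to work in coordinates: if $u = \sum_{|I|=\ell} u_I\,dx^I$, then
\[
\triangle u = \sum_{|I|=\ell} (\Delta u_I)\, dx^I,
\]
and passing to the Fourier transform coefficient by coefficient gives $\triangle(\xi) = |\xi|^2\,\mathrm{id}_{\bigwedge^\ell \Rbf^n}$. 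Alternatively, one can establish the identity directly from the algebraic formula above by using $\star\star = (-1)^{\ell(n-\ell)}\mathrm{id}$ and the standard compatibility $\iota_\xi \alpha = (-1)^{\ell(n-\ell)}\star(\xi\wedge\star\alpha)$ between the Hodge star, the wedge, and the contraction by $\xi$.

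Since $\triangle(\xi) = |\xi|^2\,\mathrm{id}$, for every $\xi \neq 0$ the map $\triangle(\xi)$ is an isomorphism of $\bigwedge^\ell \Rbf^n$; in particular, $\im \triangle(\xi) = \bigwedge^\ell\Rbf^n$ is constant in $\xi$, so $\triangle$ is of maximal rank (it is moreover elliptic, with $W_{\triangle} = \bigwedge^\ell \Rbf^n$). Invoking Corollary~\ref{cor:weak} then closes the argument. The only conceptual obstacle is the algebraic identity $\triangle(\xi) = |\xi|^2\,\mathrm{id}$, which is a classical fact of Hodge theory on flat space and requires no new ingredient beyond the machinery already developed in the paper.
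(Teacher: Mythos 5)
Your proof is correct and takes essentially the same route as the paper: verify that the Laplace--Beltrami operator $\triangle$ is of maximal rank and then invoke the paper's general distance estimates. The only difference is cosmetic---the paper deduces surjectivity of $\triangle(\xi)$ by factoring $\triangle(\xi)=(d(\xi),\delta(\xi))^{*}\circ(d(\xi),\delta(\xi))$ and checking that $(d(\xi),\delta(\xi))$ is injective, whereas you compute $\triangle(\xi)=|\xi|^{2}\,\mathrm{id}$ outright from the flat Hodge identity; you also correctly flag the $L^{2}$ on the right-hand side as a typo for $L^{p}$.
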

\begin{proof}Let us fix $\xi \in \Rbf^n$ a non-zero vector. First, let recall that $\delta(\xi)$ is the adjoint $d(\xi)$ so that $\triangle(\xi)= (d,\delta)^* \circ (d,\delta)$. Since $\triangle(\xi) : \bigwedge^\ell \R^n \to \bigwedge^\ell\R^n$, it suffices to observe that $(d(\xi),\delta(\xi))$ is injective, for then it follows that $\triangle(\xi)$ is onto and one can apply the the results of Theorem~\ref{thm:2}. Indeed, $a\in \ker (d(\xi),\delta(\xi))$ if and only if both $\xi \wedge a$ and $\xi \wedge (\star a)$ are zero, which implies that $a = 0$ as desired.
\end{proof}

\subsection{Elliptic systems} A  $k$\textsuperscript{th} order operator on $\R^n$, from $V$ to $W$, is called \emph{elliptic} provided that its principal symbol is injective for all non-zero frequencies, or equivalently, that
\[
\forall \xi \in \Rbf^n, \qquad |A(\xi)[v]| \ge c|\xi|^k|v|
\]
for some  $c > 0$. 

\begin{definition}[Elliptic system]\label{def:es}An \emph{elliptic system} (not to be confused with an elliptic operator) is an elliptic operator with as many indeterminate coordinates as its number of linearly independent equations, that is, an elliptic operator from $V$ to $W$, with
\[
\dim(V) = \dim(W).
\]
\end{definition}
Note that elliptic systems are maximal-rank operators, which yields
 the following projection estimates for elliptic systems:
\begin{proposition}
	Let $\Acal$ be a $k$\textsuperscript{th} order elliptic system on $\Rbf^n$, from $V$ to $W$. Then,
	\begin{equation}\label{eq:ES}
	\inf_{\substack{v \,\in\, L^p(\Omega;V), \\ \Acal v \,= \,0}} \|u - v\|_{W^{k,p}(\Omega)} \le C(p,\diam\Omega,A)\,\|\Acal u\|_{L^p(\Omega;W)}
	\end{equation}
	for all $u \in L^p(\Omega;V)$.
\end{proposition}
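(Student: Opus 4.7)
The plan is to observe that an elliptic system is a maximal-rank operator and then invoke the general projection/distance theorems established earlier. Concretely, my first step is to record that if $\Acal$ is an elliptic system, then for every nonzero $\xi \in \Rbf^n$ the linear map $A(\xi):V \to W$ is injective by the ellipticity lower bound $|A(\xi)v| \ge c|\xi|^k|v|$. Combining injectivity with the defining condition $\dim V = \dim W$ of an elliptic \emph{system} and the rank-nullity theorem, $A(\xi)$ is a bijection. In particular $\im A(\xi) = W$ for every $\xi \neq 0$, so the image is independent of $\xi$ and the maximal-rank hypothesis
\[
\im A(\xi) = \im A(\eta) \quad \text{for all } \xi,\eta \in \Rbf^n-\{0\}
\]
is satisfied (with the common image being all of $W$, i.e.\ $W_\Acal = W$).

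Having verified that $\Acal$ is a maximal-rank operator, the estimate is immediate from the general machinery. Specifically, I would apply Corollary~\ref{cor:weak} with $r = 0$, which yields
\[
\min_{v \in N_p(\Acal,\Omega)} \|u - v\|_{W^{k,p}(\Omega)} \le C(p,\diam\Omega,A)\,\|\Acal u\|_{L^p(\Omega)}
\]
for every $u \in L^p(\Omega;V)$ with $\Acal u \in L^p(\Omega;W)$. Under the convention stated in the section's notational remark that $\|\Acal u\|_{L^p(\Omega)} = \infty$ whenever $\Acal u \notin L^p(\Omega;W)$, the estimate~\eqref{eq:ES} is vacuous in the remaining case and so holds trivially. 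Alternatively, one can invoke Theorem~\ref{thm:2} directly: the projection $T u = u - \Acal^{-1}[\Acal u]$ takes values in $N_p(\Acal,\Omega)$ and its complement $u - Tu = \Acal^{-1}[\Acal u]$ satisfies the required Sobolev bound by the regularity estimate in Lemma~\ref{lem:existence}.

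There is essentially no analytic obstacle here beyond the linear-algebra reduction; the only thing to be careful about is to make the implication \emph{elliptic system} $\Rightarrow$ \emph{maximal rank} explicit, since the stronger notion of ellipticity (injectivity of the symbol) does not \emph{a priori} give surjectivity without the balanced-dimension assumption. Once that is noted, all the work has already been done in Theorem~\ref{thm:2} and Corollary~\ref{cor:weak}, and no further estimate is required.
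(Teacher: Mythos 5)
Your proposal is correct and follows exactly the paper's route: the linear-algebra observation that ellipticity (injectivity of $A(\xi)$) together with $\dim V = \dim W$ forces $A(\xi)$ to be a bijection for all $\xi \neq 0$, hence $\Acal$ has maximal rank, after which the estimate is an immediate application of Theorem~\ref{thm:2} (equivalently Corollary~\ref{cor:weak} with $r=0$). Your remark about the convention $\|\Acal u\|_{L^p(\Omega)}=\infty$ when $\Acal u\notin L^p(\Omega;W)$, and your flag that the balanced-dimension assumption is what upgrades injectivity to surjectivity, are both correct and make explicit what the paper leaves as a one-line remark.
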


\begin{example}[The deviatoric operator]
	The operator associated with the shear part of the symmetric gradient
	\[
	\varepsilon^D(u) \coloneqq \frac 12(Du + Du^T) - \frac{\diverg u}{n} I_n, \qquad u: \Rbf^n \to \Rbf^n,
	\]
	defines an elliptic operator on $\R^n$, from $\Rbf^n$ to the space of real-valued $n \times n$ symmetric trace-free matrices. For $n = 2$, the space of trace-free symmetric $2 \times 2$ matrices has dimension $2$ and hence $\varepsilon^D$ defines an elliptic system for $n = 2$ (thus, also satisfying the projection estimate~\eqref{eq:ES} {on arbitrary domains}).   For $n \ge 3$, it can be shown that $E_D$ has a finite dimensional distributional null-space and therefore the projection estimate~\eqref{eq:ES} also holds {provided that $\Omega$ is a sufficiently regular domain} (cf.~\cite[Theorem 3.3]{Diening1}).
\end{example}
\begin{example}The generalized Laplacian $\Delta_\Acal$ associated with any elliptic operator $\Acal$ from $V$ to $W$ defines an elliptic system from $V$ into itself. Indeed, $\Delta_\Acal$ is also elliptic since it is $2k$-homogeneous and its principal symbol satisfies
	\[
	A(\xi)^*\circ A(\xi)[a] \cdot a = |A(\xi)[a]|^2 \ge C^2|\xi|^{2k}|a|^{2} \qquad (\xi \in \R^n, a \in V). 
	\]
\end{example}

\begin{remark}[elliptic systems vs. ADN systems] ADN systems (from $V$ to $V$) ---as introduced by Agmon, Douglis, and Nirenberg in~\cite{ADN,ADN2}--- are elliptic systems and, in particular, also maximal-rank elliptic operators. However, 
	there exist elliptic systems that fail to be ADN systems (e.g., the Cauchy-Riemann equations). 
\end{remark}

\subsection{Adjoints of elliptic operators}\label{sec:adjoint}
Given an elliptic operator $\Acal$ from $V$ to $W$, its associated formal adjoint $\Acal^*$ from $W$ to $V$ defines an operator with a maximal-rank principal symbol. Indeed, by definition, the principal symbol of $\Acal^*$ is given (up to a sign) by  the algebraic adjoint $A(\xi)^*$, and, since $A(\xi)$ is injective for all non-zero $\xi \in \Rbf^n$, a standard linear algebra argument implies that
\[
\forall \xi \in \Rbf^n - \{0\}, \qquad \im A(\xi)^* = \{\ker A(\xi)\}^\perp = \{0_V\}^\perp = V.
\]
The converse also holds: the formal adjoint of every maximal-rank operator is elliptic.
In particular, the following estimates hold:
\begin{proposition}
	Let $\Acal$ be a $k$\textsuperscript{th} order elliptic operator from $V$ to $W$. Then,
	\[
	\inf_{\substack{v \,\in\, L^p(\Omega;W), \\ \Acal^* v \,= \,0}}	\|u - v\|_{W^{k,p}(\Omega)} \le C(p,\diam\Omega,A^*) \, \|\Acal^* u\|_{L^p(\Omega)}.
	\]
	for all $u \in L^p(\Omega;W)$.
\end{proposition}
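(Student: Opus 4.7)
My plan is to observe that the statement is essentially a direct corollary of Theorem~\ref{thm:2} applied to the adjoint operator $\Acal^*$, once we know that $\Acal^*$ has maximal rank. Most of the work is already done in the paragraph immediately preceding the statement, so I only need to make that argument precise and then invoke the right distance estimate.

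First, I would recall that the principal symbol of $\Acal^*$ is (up to the sign $(-1)^k$) the algebraic adjoint $A(\xi)^*$ of $A(\xi)$, and that this is again a $k$-homogeneous tensor-valued polynomial, so $\Acal^*$ is a homogeneous constant-coefficient operator of the same order $k$ as $\Acal$. Next I would verify that $\Acal^*$ has maximal rank: by ellipticity of $\Acal$, $\ker A(\xi) = \{0_V\}$ for every $\xi \in \Rbf^n - \{0\}$, and since for any linear map $M \colon V \to W$ one has the orthogonality relation $\im M^* = (\ker M)^\perp$, we get
\[
\im A(\xi)^* = (\ker A(\xi))^\perp = \{0_V\}^\perp = V \qquad \text{for every $\xi \in \Rbf^n - \{0\}$},
\]
so $\im A^*(\xi)$ is independent of $\xi$, which is precisely the maximal-rank condition for the operator $\Acal^*$ (regarded as an operator from $W$ to $V$).

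Once maximal-rank is established, the proof reduces to applying Theorem~\ref{thm:2} (in fact, its Corollary~\ref{cor:weak} with $r=0$) to the operator $\Acal^*$ on $\Omega$. This yields, for every $u \in L^p(\Omega;W)$ with $\Acal^* u \in L^p(\Omega;V)$, the inequality
\[
\inf_{\substack{v \in L^p(\Omega;W) \\ \Acal^* v = 0}} \|u - v\|_{W^{k,p}(\Omega)} \le C(p,\diam \Omega, A^*)\,\|\Acal^* u\|_{L^p(\Omega)},
\]
which is the claimed estimate. If $\Acal^* u \notin L^p(\Omega;V)$, the right-hand side is $+\infty$ by the convention stated at the beginning of Section~\ref{sec:examples}, so the inequality is trivially true.

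There is essentially no obstacle here: the entire content of the statement is packaged in the maximal-rank identification of $\Acal^*$, which is a one-line linear algebra fact, and the appeal to Theorem~\ref{thm:2} is mechanical. The only mild care needed is to note that the constant now depends on $A^*$ rather than on $A$ directly, as is already reflected in the statement.
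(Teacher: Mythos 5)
Your argument is exactly the paper's: identify the principal symbol of $\Acal^*$ as (up to sign) $A(\xi)^*$, use $\im A(\xi)^* = (\ker A(\xi))^\perp = V$ to conclude $\Acal^*$ has maximal rank, and then apply Theorem~\ref{thm:2} (equivalently Corollary~\ref{cor:weak} with $r=0$) to $\Acal^*$. This matches the paper's treatment in Section~\ref{sec:adjoint}, so the proposal is correct and takes the same route.
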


%

%

\subsection{Lie group differential constriants (linear stability)} In the following we write $\Mat(m \times n)$ to denote the space of $m\times n$ matrices with real coefficients. When $m = n$, we simply write $\Mat(n)$.

An important problem in the modern compensated compactness theory it is of utter importance to  understand the regularity, rigidity and variational properties conveyed by \emph{non-linear} pointwise differential inclusions of the form
\begin{equation}\label{eq:nonlinear}
	Du(x) \in G, \qquad x \in \Omega,
\end{equation}
where $u : \R^n \to \R^m$ is a map of least possible regularity  and 
 $G$ is either a particular set of points,  a submanifold of $\Mat(m \times n)$, or a Lie group in $\Mat(n \times n)$. For example, one may ask what type of properties on $G$ render the following Liouville rigidity property: if $u \in W^{1,1}_\loc(\R^n)$ satisfies~\eqref{eq:nonlinear}, then $u$ is automatically smooth and in fact there exists a matrix $R \in G$ such that
\[
	Du(x) = R \qquad \text{for all $x \in  \Omega$}.
\]
More generally, a central question is to study the possible quantitative stability of the constraint. Namely, can we find a constant $C$ such that 
\[
	\inf_{v \in \Kcal} \|Du - R\|_{L^p(\Omega)} \le C \|\!\dist(G,Du)\|_{L^p(\Omega)}, 
\]
where $\Kcal$ is the set of all sufficiently regular maps satisfying~\eqref{eq:nonlinear}. In general, this is a very challenging problem to solve with available methods. When $G$ is a sufficiently regular manifold, a necessary condition for the validity of the quantitative stability  is the validity of the linearized stability estimate: 
\[
	\|Du - Dv\|_{L^p(\Omega)} \le C \|\dist(T_p G,Du)\|_{L^p(\Omega)}, \quad p \in G.
\]
This is particularly interesting when $G$ is a compact matrix Lie group, given that the linear stability can be studied in terms of  the validity  of the estimate
\[
	\inf_{D\varphi \in \mathfrak g} \|Du - D\varphi\|_{L^p(\Omega)}  \le \|\!\dist(\mathfrak g,Du)\|_{L^p(\Omega)},
\]
where $\mathfrak g = T_{\id} G$ is the Lie algebra associated with $G$. Both inequalities are of the form~\eqref{eq:start}. Indeed, the distance $\dist(T_p G,Du)$ can be expressed in terms of the operator $\Acal u = P(Du)$, where $P: \Mrm(m \times n) \to \Mrm(m \times n)$ is the canonical projection on the orthogonal complement of $T_p G$. In a more general setting, there are several  variational models in continuum mechanics concerned with the study of the rigidity for the linearized differential inclusion $D^k u(x) \in \Kcal$ (for a vector-valued map $u$ and a suitable Lie group or a suitable subset of $k$-order tensors $\Kcal$), the study of its associated non-linear stability (at $e \in \Kcal$)
\[
	\inf_{v \in L^p(\Omega), \Acal v = 0}\|D^k(u - v)\|_{L^p(\Omega)} \le C(p,\Omega,\Acal) \|\Acal u\|_{L^p(\Omega)}, 
\]
where $\Acal  u= p_{N_e \Kcal}(D^k u)$ is the linear differential operator defined by projecting $Du$ onto the normal $N_e \Kcal$ of the tangent space $T_e \Kcal$. \\ 

Next, we recall some relevant Lie group constraints in Analysis:

\subsubsection{Korn's inequality in linear elasticity theory}\label{Sec:Korn} In the nonlinear theory of elasticity, 
the elastic properties (of a body $\Omega \subset \R^3$) are unchanged under translations and rotations. In a variational setting, this leads one to consider integral energies $\int_\Omega F(Du)$ that are invariant under the action of 
\[
{\SO}(n) = \set {A \in \Mrm(n)}{AA^T = I, \det (A) = 1},
\] 
the set of orthogonal matrices with determinant $1$.  A simple prototype of this model is to consider powers of the distance function  $F(Q) = \dist(Q,\SO(n))$.  Given that these are highly non-convex functions, the study of their associated variational landscape require a delicate analysis of the interplay between low and large energy configurations. This issue stages the paramount relevance of the \emph{analytical rigidity} properties of $\SO(n)$.  Liouville, who studied more general conformality properties, realized that the constraint $Du(x) \in \SO(n)$ is rather rigid on $C^1$ maps (it necessarily holds $Du \equiv const$).  Re\v setnyak~\cite{Resh} showed the same holds for $u \in W^{1,1}(\Omega)$ and he also showed $H^1$-stability under weak convergence: if $Du_j \toweak Du$ and $\dist(Du_j,\SO(n)) \to 0$ in $L^2(\Omega)$, then $Du_j \to Du$ in $H^1(\Omega)$. Friesecke, James and M\"uller~\cite{FM1} (see also~\cite{FM2}) established $L^p$ quantitative stability of the constraint for all $p \in (1,\infty)$: for all $u \in W^{1,p}(\Omega;\R^n)$ it holds
\[
	\inf_{R \in \SO(n)} \|Du - R\|_{L^p(\Omega)}  \le C(p,\Omega)\|\dist(Du,\SO(n))\|_{L^p(\Omega)}.
\]
Prior to this major achievement, the only way to understand the behavior of low-energy elastic configurations was to linearize the energy (hence the so-called \emph{linearized elasticity theory}).  
Since $\SO(n)$ is a compact Lie group and the space of skew-symmetric matrices $\mathfrak{so}(n) = \set{A \in \Mrm(n)}{A^T = - A}$ is its associated (Lie algebra) tangent space  at the identity, the linearized stability estimate is precisely given by Korn's second inequality
\begin{align*}
	\inf_{R \in \mathfrak{so}(n)} \|Du - R\|_{L^p(\Omega)} & \lesssim \|\dist(Du,\so(n))\|_{L^p(\Omega)} \\
	& =\|\sym(Du)\|_{L^p(\Omega)}\,, 
\end{align*}
where $\sym(Du) = \frac 12(Du + Du^T)$ is the so-called symmetric gradient operator. 

\subsubsection{Stability of infinitesimal conformal maps}A map $u: \Omega \to \R^n$ is called conformal if its differential preserves angles up to a dilation. For sufficiently regular maps, this can be expressed in terms of the differential constraint $Du(x) \in \Krm(n)$ for all $x \in \Omega$, where 
\[
\Krm(n) = \set{\lambda Q}{\lambda > 0, Q \in \Srm\Orm(n)}
\]
is the group of conformal matrices. Re\v setnyak observed that the pointwise constraint is satisfied by M\"obius transformations. Therefore, this constraint conveys no Liouville-type rigidity. The conformal group $\Krm(n)$ has a Lie group structure, and its Lie algebra is $\mathfrak k(n) =  \R I_N \oplus \mathfrak{so}(n)$. 
In particular, the space of \emph{infinitesimal conformal maps} 
\[
\set{u  \in L^1_\loc(\Omega;\R^n)}{Du(x) \in \mathfrak{k}(n) \text{ in the sense of disitrubtions}},
\] 
coincides with the kernel of the deviatoric operator 
\[
	\varepsilon^D u = \dev \sym(Du) = \sym(Du) - \frac{\diverg(u)}{n} I_n.
\]
This is a highly regularizing (elliptic) operator, and therefore, all infinitesimal conformal maps are smooth on $\Omega$. 

\textbf{Dimension $n \ge 3$ (FDN).} The kernel of $\varepsilon^D$ is the finite-dimensional space of $n$-dimensional \emph{conformal killing vectors} 
\[
	\dpr{a,x} x - \frac 12 a |x|^2 + Ax + cx + b, \qquad A \in \Srm\Orm(n), \, a,b \in \R^n, \, c \in \R.
\]
By a well-known result of Smith~\cite{smith2},  the finiteness of the kernel of the deviatoric operator is equivalent (on sufficiently regular domains) to the validity of the coercive estimate
\[
 \|u\|_{W^{1,p}(\Omega)} \lesssim \|u\|_{L^p(\Omega)} +  \|\varepsilon^D u\|_{L^p(\Omega)}.
 \]
In turn, by standard tools of functional analysis, this can be seen to be equivalent to the Sobolev-distance estimate
 \[
	\|u - \Pi v \|_{W^{1,p}(\Omega)} \le C \|\varepsilon^D u\|_{L^p(\Omega)},
 \]
 where $\Pi$ is an arbitrary projection from $L^p(\Omega;\R^n)$ to the space of conformal killing vectors restricted to $\Omega$. 
 
 \textbf{Dimension $n = 2$ (Maximal-rank).} The picture is considerably different because the system $\varepsilon^D$ is equivalent to the Cauchy--Riemann equations, which possess a considerably larger kernel:  this is a system with an infinite dimensional kernel (whence the coercive inequality fails),  given by the holomorphic maps on $\Omega$. Despite the lack of a Liouville-type rigidity, our results show that $\ker \varepsilon^D|_\Omega$ has a topological complement in $W^{1,p}(\Omega;\R^2)$. In fact, we show that  there exists a linear projection $\Pi : D(\varepsilon^D|_\Omega) \subset L^p(\Omega;\R^2) \to \ker \varepsilon^D|_\Omega$ satisfying the Sobolev estimate
\[
 	\|u - \Pi u \|_{W^{1,p}(\Omega)} \le C \|\varepsilon^D u\|_{L^p(\Omega)}.
\]
Remarkably, our results show that this estimate holds on arbitrary (possibly irregular and possibly disconnected) bounded domains of $\R^2$.

\subsubsection{Stability of determinant-one matrix gradient-fields} The special linear group $\SL(n)$ consists of all $n\times n$ matrices with determinant $1$. A sufficiently regular change of variables $u : \R^n \to \R^n$ is volume preserving provided that  $Du(x) \in \mathrm{SL}(n)$. In hindsight, with our previous example, this differential constraint is rather loose. It is simple to see that rigidity for this constraint fails, even for Lipschitz maps. Indeed, $\mathrm{SL}(n)$ contains several rank-one connections, which allows for high-order laminations and convex integration methods.    The Lie algebra associated with $\SL(n)$ is the space $\sll(n)$ of $n \times n$ trace-free matrices. Therefore, the stability 
of the differential constraint $Du(x) \in \sll(n)$ corresponds to the validity of the Korn inequality
\[
	\inf_{\substack{v \in L^p(\Omega;\R^n),\\\diverg u = 0}}\|D(u-v)\|_{L^p(\Omega)} \le C \|\!\diverg u\|_{L^p(\Omega)},
\] 
Our results show that this estimate holds on arbitrary bounded sets of $\R^n$. 

\subsubsection{In relation to the Monge--\`Ampere equation} The constraint  $\det(D^2 u) = 1$
can be written as $D^2 u \in \SL_\sym(n)$.  Since the corresponding Lie algebra is the space of all trace-free symmetric  matrices $\sll_\sym(n)$, the stability of the linearized constraint $D^2u \in \sll_\sym(n)$ corresponds with the estimate 
\[
	\inf_{\substack{v \in L^p(\Omega;\R^n),\\\Delta u = 0}} \|D^2(u - v)\|_{L^p(\Omega)} \le C \|\Delta u\|_{L^p(\Omega)},
\]
which also follows directly from our results.

\subsection*{Acknowledgements} The author was supported by the Fonds de la Recherche Scientifique (FNRS) under Grant No 40005112. I would like to thank Lars Diening for sharing with me relevant bibliography about the background theory pertaining to this work. 

\subsection*{Data Availability Statement}
Data sharing is not applicable to this article as no new data were created or analysed in this study.

\subsection*{Author Declarations}
The author has no conflicts to disclose.

\newpage


\begin{thebibliography}{10}

\bibitem{ADN}
S.~Agmon, A.~Douglis, and L.~Nirenberg.
\newblock \emph{Estimates near the boundary for solutions of elliptic partial
  differential equations satisfying general boundary conditions. {I}.}
\newblock Comm. Pure Appl. Math. \textbf{12} (1959), 623--727.

\bibitem{ADN2}
\bysame,
\newblock \emph{Estimates near the boundary for solutions of elliptic partial
  differential equations satisfying general boundary conditions. {II}.}
\newblock {Comm. Pure Appl. Math.} \textbf{17} (1964), 17:35--92.

\bibitem{ar}
N.~Aronszajn.
\newblock \emph{On coercive integro-differential quadratic forms.}
\newblock In {Conference on partial differential equations (1954), University of
  Kansas}, 94--106.
  
\bibitem{adolfo}
A.~Arroyo-Rabasa.
\newblock \emph{Characterization of generalized Young measures generated by {$\mathcal A$}-free measures}.
\newblock  {Arch. Ration. Mech. Anal.} \textbf{1} (2021), 235--325.


\bibitem{advances}
A.~Arroyo-Rabasa, G.~De~Philippis, and F. ~Rindler.
\newblock \emph{Lower semicontinuity and relaxation of linear-growth integral
  functionals under {PDE} constraints.}
\newblock {Adv. Calc. Var.} \textbf{13}(3) (2020), 219--255.

\bibitem{simental}
A.~{Arroyo-Rabasa} and J.~{Simental}.
\newblock \emph{An elementary proof of the homological properties of constant-rank
  operators}.
\newblock {C. R. Math.}, \textbf{361} (2023), 45--63

\bibitem{BVS}
P.~Bousquet and J.~Van~Schaftingen.
\newblock \emph{Hardy-{S}obolev inequalities for vector fields and canceling linear
  differential operators.}
\newblock {Indiana Univ. Math. J.} \textbf{63}(5) (2014), 1419--1445.

\bibitem{Diening1}
D.~Breit, A.~Cianchi, and L.~Diening. 
\newblock \emph{Trace-free Korn inequalities in Orlicz spaces.}
\newblock {SIAM Journal on Mathematical Analysis} \textbf{49}(4) (2017), 2496--2526.

\bibitem{Diening}
D.~Breit, L.~Diening, and F.~Gmeineder.
\newblock \emph{On the trace operator for functions of bounded {$\mathbb
  A$}-variation.}
\newblock {Anal. PDE} \textbf{13}(2) (2020), 559--594.

\bibitem{Brezis}
H.~Brezis. 
\newblock Functional analysis, Sobolev spaces and partial differential equations. New York: Springer, 2011.

\bibitem{CZ} 
A. P. Calderón and A. Zygmund. 
\newblock \emph{On the existence of certain singular integrals.}
\newblock {Acta Math.} \textbf{88} (1952), 85--139.

\bibitem{diening2021sharp}
L.~Diening and F.~Gmeineder.
\newblock \emph{Sharp trace and Korn inequalities for differential operators.}
\newblock {\em arXiv e-prints}, arXiv:2105.09570, 2021.


\bibitem{fonseca1999quasi}
I.~Fonseca and S.~M\"{u}ller.
\newblock \emph{{$\mathcal A$}-quasiconvexity, lower semicontinuity, and {Y}oung
  measures.}
\newblock {SIAM J. Math. Anal.} \textbf{30}(6) (1999), 1355--1390.

\bibitem{FM1}
G.~Friesecke, R.~D.~James and S.~M\"{u}ller.
\newblock \emph{A theorem on geometric rigidity and the derivation of nonlinear
   plate theory from three-dimensional elasticity}.
\newblock {Comm. Pure Appl. Math.}, \textbf{55}(11) (2002), 1461--1506.

\bibitem{FM2}
G.~Friesecke and S.~M\"uller.
\newblock \emph{A hierarchy of plate models derived from nonlinear elasticity by
   gamma-convergence.}
\newblock {Arch. Ration. Mech. Anal.} \textbf{180}(2) (2016), 183--236.


\bibitem{Fuchs}
M.~Fuchs.
\newblock \emph{An estimate for the distance of a complex valued {S}obolev function
  defined on the unit disc to the class of holomorphic functions.}
\newblock {J. Appl. Anal.} \textbf{17}(1) (2011), 131--135.

\bibitem{GR}
F.~Gmeineder and B.~Rai\c{t}\u{a}.
\newblock \emph{Embeddings for {$\mathbb{A}$}-weakly differentiable functions on
  domains}.
\newblock {J. Funct. Anal.} \textbf{277}(12) (2019): 108278.

\bibitem{guerra2020necessity}
A.~Guerra and B.~Rai{\c{t}}{\u{a}}.
\newblock On the necessity of the constant rank condition for ${L}^p$ estimates. 
\newblock {C. R. Math.},
\textbf{358}(9-10) (2020), 1091--1095.

\bibitem{GR1}
\bysame.
\emph{Quasiconvexity, null Lagrangians, and Hardy space integrability under constant rank constraints.} Arch. Ration. Mech. Anal.  245(1) (2022): 279-320.

\bibitem{GR2}
A.~Guerra, B.~Rai{\c{t}}{\u{a}} and I.~Schrecker. 
\newblock \emph{Compensated compactness: continuity in optimal weak topologies.}
\newblock J. Funct. Anal. 283(7) (2022): 109596.



\bibitem{gustafson2011poincare}
D.~Gustafson.
\newblock \emph{A generalized {P}oincar\'{e} inequality for a class of constant
  coefficient differential operators.}
\newblock {Proc. Amer. Math. Soc.} \textbf{139}(8) (2011), 2721--2728.

\bibitem{Hormander}
L. H\"{o}rmander.
\newblock {The analysis of linear partial differential operators. {I}},
  volume 256 of { Grundlehren der Mathematischen Wissenschaften [Fundamental
  Principles of Mathematical Sciences]}.
\newblock Springer-Verlag, Berlin, 1983.
\newblock Distribution theory and Fourier analysis.

\bibitem{John}
F.~John.
\newblock \emph{Rotation and strain.}
\newblock {Comm. Pure Appl. Math.}, \textbf{14} (1961), 391--413.

\bibitem{kato}
T.~Kato.
\newblock \emph{On a coerciveness theorem by {S}chulenberger and {W}ilcox.}
\newblock {Indiana Univ. Math. J.}, \textbf{24}: (1974), 979--985.

\bibitem{KK}
B.~Kirchheim and J.~Kristensen. \emph{On rank one convex functions that are homogeneous of degree one}. 
\newblock Arch. Ration. Mech. Anal. 221 (2016), 527--558.

\bibitem{Lew} 
K.~de Leeuw and H.~Mirkil. 
\newblock \emph{A priori estimates for differential operators in $L^\infty$ norm.}
\newblock Illinois J. Math. 8 (1964),12--124.

\bibitem{Lewi}
P.~Lewintan, S.~M\"uller, and P.~Neff. 
\newblock \emph{Korn inequalities for incompatible tensor fields in three space dimensions with conformally invariant dislocation energy}. 
\newblock Calc. Var. Partial Differential Equations, \textbf{60} (2021): 1--46.


\bibitem{murat1981compacite}
F.~Murat.
\newblock \emph{Compacit\'{e} par compensation: condition n\'{e}cessaire et
  suffisante de continuit\'{e} faible sous une hypoth\`ese de rang constant.}
\newblock {Ann. Scuola Norm. Sup. Pisa Cl. Sci. (4)}, \textbf{8}(1) (1981), 69--102.

\bibitem{Ornstein}
D.~Ornstein. \emph{A non-equality for differential operators in the $L1$ norm.} 
\newblock Arch. Ration. Mech. Anal. 11 (1962), 40--49.

\bibitem{BR}
B.~{Rai{\c{t}}{\u{a}}}.
\newblock \emph{$\mathrm{L}^1$-estimates for constant rank operators}.
\newblock \emph{arXiv e-prints}, arXiv:1811.10057, 2018.

\bibitem{raita2019potentials}
\bysame,
 \newblock \emph{Potentials for {$\mathcal A$}-quasiconvexity.}
\newblock {Calc. Var. Partial Differential Equations} \textbf{58}(3) (2019), 1--16.
  
\bibitem{Resh}
Yu.~G.~Reshetnyak.
\newblock \emph{Liouville’s theorem on conformal mappings for minimal regularity assumptions.}
\newblock {Siberian Math. J.} \textbf{8} (1967), 631--653.


\bibitem{SW1}
J.~Schulenberger and C.~Wilcox.
\newblock \emph{A coerciveness inequality for a class of nonelliptic operators of
  constant deficit.}
\newblock {Ann. Mat. Pura Appl. (4)}, \textbf{92} (1972), 77--84.


\bibitem{smith1}
K~T.~Smith.
\newblock \emph{Inequalities for formally positive integro-differential forms.}
\newblock {Bull. Amer. Math. Soc.}, \textbf{67} (1961) ,368--370.

\bibitem{smith2}
\bysame, 
\newblock \emph{Formulas to represent functions by their derivatives.}
\newblock {Math. Ann.} \textbf{188} (1970), 53--77.

\bibitem{vans2013jems}
J.~Van~Schaftingen.
\newblock \emph{Limiting {S}obolev inequalities for vector fields and canceling
  linear differential operators.}
\newblock {J. Eur. Math. Soc. (JEMS)} \textbf{15}(3) (2013), 877--921.

\end{thebibliography}

\newcommand{\noopsort}[1]{}
\providecommand{\bysame}{\leavevmode\hbox to3em{\hrulefill}\thinspace}
\providecommand{\MR}{\relax\ifhmode\unskip\space\fi MR }
\providecommand{\MRhref}[2]{%
  \href{http://www.ams.org/mathscinet-getitem?mr=#1}{#2}
}
\providecommand{\href}[2]{#2}

\enlargethispage*{1cm}
\medskip
\medskip
\medskip
\medskip

\vbox{\parskip0pt\prevdepth-1000pt \multiply \baselineskip by 5 \divide \baselineskip by 6 \parskip0pt
Adolfo Arroyo-Rabasa\\
Institut de Recherche en Mathématique et Physique (IRMP)  \\
Université Catholique de Louvain \\
1348 Louvain-la-Neuve, Belgium}

e-mail: arroyo.rabasa@uclouvain.be

\end{document}